\theoremstyle{plain}
\newtheorem{thm}{Theorem}[section]
\newtheorem{proposition}[thm]{Proposition}
\newtheorem{theorem}[thm]{Theorem}
\newtheorem{lemma}[thm]{Lemma}
\newtheorem{corollary}[thm]{Corollary}
\theoremstyle{definition}
\newtheorem{definition}[thm]{Definition}
\newcommand{\RR}{\mathbb{R}}
\newcommand{\cS}{\mathcal{S}}
\newcommand{\cE}{\mathcal{E}}
\renewcommand{\phi}{\varphi}
\newcommand{\psd}{\succeq}
\newcommand{\pd}{\succ}
\newcommand{\nsd}{\preceq}
\newcommand{\nd}{\prec}
\newcommand{\tr}{\textup{tr}}
\newcommand{\conv}{\textup{conv}}
\newcommand{\cone}{\textup{cone}}
\newcommand{\closure}[1]{\textup{cl}(#1)}
\newcommand{\interior}[1]{\textup{int}(#1)}
\newcommand{\relint}{\textup{relint}}
\newcommand{\spn}{\textup{span}}
\newcommand{\sparse}{\textsc{sp}}
\def\ALG@special@indent{%
    \ifdim\ALG@thistlm=0pt\relax
        \hskip-\leftmargin
    \else
        \hskip\ALG@thistlm
    \fi
}
\newcommand{\Input}[1]{\item[]\noindent\ALG@special@indent \textbf{Input:}\ #1}
\newcommand{\Output}[1]{\item[]\noindent\ALG@special@indent \textbf{Output:}\ #1}
\newcommand{\Indent}[1]{\item[]\noindent\ALG@special@indent \hspace{2.675em} #1}
\title{Sparsification of sums with respect to convex cones}
\author{James Saunderson\thanks{Department of Electrical and Compuater Systems Engineering, Monash University, Clayton VIC 3800, Australia. Email: \texttt{james.saunderson@monash.edu}}}
\begin{document}
\maketitle
\begin{abstract}
	Let $x_1,x_2,\ldots,x_m$ be elements of a convex cone $K$ such that their sum, $e$, is in the relative interior of $K$. An $\epsilon$-sparsification of the sum involves taking a subset of the $x_i$ and reweighting them by positive scalars, so that the resulting sum is $\epsilon$-close to $e$, where error is measured in a relative sense  with respect to the order induced by $K$. 	This generalizes the influential spectral sparsification model for sums of positive semidefinite matrices.

	This paper introduces and studies the sparsification function of a convex cone, which measures, in the worst case over all possible sums from the cone, the smallest size of an $\epsilon$-sparsifier.  The linear-sized spectral sparsification theorem of Batson, Spielman, and Srivastava can be viewed as a bound on the sparsification function of the cone of positive semidefinite matrices.  This result is generalized to a family of convex cones (including all hyperbolicity cones) that admit a $\nu$-logarithmically homogeneous self-concordant barrier with certain additional properties.  For these cones, the sparsification function is bounded above by $\lceil4\nu/\epsilon^2\rceil$.  For general convex cones that only admit an ordinary $\nu$-logarithmically homogeneous self-concordant barrier, the sparsification function is bounded above by $\lceil(4\nu/\epsilon)^2\rceil$. Furthermore, the paper explores how sparsification functions interact with various convex geometric operations (such as conic lifts), and describes implications of sparsification with respect to cones for certain conic optimization problems.
\end{abstract}

\section{Introduction}
\label{sec:intro}

The starting point for this paper is the celebrated linear-sized spectral sparsification 
result of Batson, Spielman, and Srivastava~\cite{batson2012twice}, in the general rank form given in~\cite{silva2015sparse}. This result 
describes how a collection of $d\times d$ positive semidefinite matrices (elements of $\cS_+^d$) that sum to a positive definite matrix (i.e., element of 
$\interior{\cS_+^d}$), can be approximated in a certain spectral sense by keeping only a small number of terms in the sum and suitably reweighting them. 
Here, and throughout, we use the notation $X\nsd Y$ to mean that $Y-X$ is positive semidefinite.
\begin{theorem}
	\label{thm:bss}
	Let $X_1,X_2,\ldots,X_m\in \cS_+^d$ be such that $\sum_{i=1}^{m}X_i = E \in \interior{\cS_{+}^d}$ 
	and let $0<\epsilon<1$. Then there exists a subset $S\subseteq [m]$ with 
	$|S| \leq \lceil 4d/\epsilon^2\rceil$ and positive scalars $(\lambda_i)_{i\in S}$ such that 
	\[ (1-\epsilon)E \nsd \sum_{i\in S} \lambda_i X_i \nsd (1+\epsilon)E.\]
\end{theorem}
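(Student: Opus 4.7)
My plan is to follow the barrier-function approach of Batson, Spielman, and Srivastava~\cite{batson2012twice}, in the general-rank form of de Carli Silva, Harvey, and Sato~\cite{silva2015sparse}. The idea is to build the sparsifier one term at a time, tracking two potential functions that sandwich the spectrum of the running partial sum.

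As a first reduction I would replace each $X_i$ by $Y_i := E^{-1/2} X_i E^{-1/2}$, so that $\sum_i Y_i = I_d$. A $(1\pm\epsilon)I_d$-sparsifier of the $Y_i$ pulls back through conjugation by $E^{1/2}$ to a $(1\pm\epsilon)E$-sparsifier of the $X_i$, so I may assume $E = I_d$ throughout.

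The core of the argument is iterative. Set $k := \lceil 4d/\epsilon^2\rceil$, and inductively build a matrix $A_j = \sum_{t\leq j} c_t X_{i_t}$ with positive weights $c_t>0$, together with linearly marching ``barrier levels'' $u_j = u_0 + j\delta_u > \lambda_{\max}(A_j)$ and $\ell_j = \ell_0 + j\delta_\ell < \lambda_{\min}(A_j)$. The invariants I would maintain are the potential bounds
\[ \Phi^u_j := \tr\bigl((u_j I - A_j)^{-1}\bigr) \leq \Phi^u_0, \qquad \Phi^\ell_j := \tr\bigl((A_j - \ell_j I)^{-1}\bigr) \leq \Phi^\ell_0. \]
I expect the main obstacle to be the per-step feasibility claim: at each iteration there exist an index $i\in[m]$ and a positive scalar $c$ such that setting $A_{j+1} := A_j + cX_i$ preserves both potential bounds when the barriers shift by $(\delta_u,\delta_\ell)$. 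Using the Sherman--Morrison--Woodbury identity, this reduces to an inequality of the form $1/c \in [L_i, U_i]$, where $U_i$ and $L_i$ are explicit expressions involving traces of $X_i$ against low powers of $(u_jI-A_j)^{-1}$ and $(A_j-\ell_j I)^{-1}$. An averaging argument, summing $L_i$ and $U_i$ over $i\in[m]$ and using $\sum_i X_i = I$, should then produce some index with $L_i \leq U_i$, yielding the needed $c$.

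Finally, I would calibrate the four parameters $(\delta_u,\delta_\ell,u_0,\ell_0)$ so that (i) the averaging inequality above holds throughout all $k$ iterations, and (ii) after $k$ steps the ratio $u_k/\ell_k$ is at most $(1+\epsilon)/(1-\epsilon)$. A uniform rescaling of the $c_t$ then centers the spectrum and produces the required sandwich $(1-\epsilon)I \nsd \sum_{t\leq k}\tilde c_t X_{i_t} \nsd (1+\epsilon)I$. The delicate point is coordinating these parameter choices so that the single-step feasibility inequality remains valid at every iteration; this is the principal technical departure from the rank-one setting of the original BSS argument, and is where careful matrix identities and trace inequalities are essential.
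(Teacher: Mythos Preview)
Your proposal is correct: it is precisely the Batson--Spielman--Srivastava barrier argument in its general-rank form, and with the right parameter choices it yields the bound $\lceil 4d/\epsilon^2\rceil$.

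The paper does not give a standalone proof of this theorem; instead it derives it as the special case $K=\cS_+^d$, $\phi=-\log\det$, $\nu=d$ of Theorem~\ref{thm:main-sc-gen}. The overall architecture there---the iterative construction in Algorithm~\ref{alg:gen-bss}, the potentials $\Phi^{u,e}$ and $\Phi_{\ell,e}$, the averaging argument in Proposition~\ref{prop:bss-prog2}, and the parameter calibration $\delta_u=1+\epsilon/2$, $\delta_\ell=1-\epsilon/2$, $u_0=-\ell_0=2d/\epsilon$ in Proposition~\ref{prop:correct-termination}---specializes exactly to the outline you describe. The technical execution differs in one respect worth noting: you plan to control the per-step potential change via Sherman--Morrison--Woodbury and explicit trace inequalities (as in the original papers), whereas the paper bypasses these matrix identities entirely, bounding the barrier increments through the abstract pairwise-self-concordance inequality (Definition~\ref{def:ssc}) and the Hessian control of Lemma~\ref{lem:ssc-hessian}. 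This is what allows the argument to generalize beyond $\cS_+^d$, and the paper remarks that the reformulation also streamlines the PSD case by avoiding Claim~3.6 of~\cite{batson2012twice}. Your endgame (control the ratio $u_k/\ell_k$ and then rescale) is equivalent to, but slightly less direct than, the paper's choice of parameters, which arranges $\ell_T/T\geq 1-\epsilon$ and $u_T/T\leq 1+\epsilon$ outright.
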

This result was motivated by the study of graph sparsification. Indeed a weighted graph $G = (V,E,w)$ with vertex set $V$, edge set $E$, and edge weights $(w_e)_{e\in E}$, 
can be represented by its weighted Laplacian 
\[ L_G = \sum_{\{i,j\}\in E} w_{\{i,j\}}(e_i-e_i)(e_i-e_j)^\intercal.\] 
If $E'\subseteq E$ and $(w'_e)_{e\in E'}$ is another weight function, a sparsification of $L_G$ is a sum of the form
\[ L_{G'} = \sum_{\{i,j\}\in E'} w_{\{i,j\}}'(e_i-e_j)(e_i-e_j)^\intercal\] 
which can be interpreted as the weighted Laplacian of a new weighted graph $G' = (V,E',w')$ with fewer edges. If $L_G$ and $L_{G'}$ 
satisfy $(1-\epsilon)L_{G} \nsd L_{G'} \nsd (1+\epsilon)L_G$ for some $0<\epsilon<1$ then many quantitative properties of $G$ are approximated
by the corresponding properties of $G'$. This idea was initially studied specifically in the context of graph cuts by Bencz\'ur and Karger~\cite{benczur1996approximating}. 
The idea of constructing spectral sparsifiers, i.e., those that approximate the Laplacian in this spectral sense, was pioneered by Spielman and Teng~\cite{spielman2004nearly}.
Before~\cite{batson2012twice} the number of edges in the known cut or spectral sparsifiers 
was $\widetilde{O}(|V|/\epsilon^2)$, where the $\widetilde{O}(\cdot)$ hides poly-logarithmic factors 
(see, e.g.,~\cite{benczur1996approximating},~\cite{spielman2008graph}). 
The major breakthrough of~\cite{batson2012twice} was to obtain a linear-sized spectral sparsifier, 
i.e., one with number of edges that is $O(|V|/\epsilon^2)$.

This paper investigates analogues of linear-sized spectral sparsification for 
general closed convex cones. If $K$ is a closed convex cone and 
$x,y\in \textup{span}(K)$ then we use the notation $x \nsd_K y$ to mean that $y-x\in K$.
If $K$ is, in addition, pointed (i.e., $K \cap (-K) = \{0\}$) then $\nsd_K$ is a partial order 
on $\textup{span}(K)$. The following definitions introduce concise language
to help keep track of the extent to which arbitrary sums of elements from a convex cone 
can be sparsified to within a given error (measured relative to the cone). 

\begin{definition}
	\label{def:sp-constant} Let $K$ be a closed convex cone.
	A \emph{sparsification function for $K$} is a monotonically non-increasing upper semi-continuous 
	function $\alpha:(0,1)\rightarrow \RR$ such that 
	for any $x_1,x_2,\ldots,x_m \in K$ satisfying $\sum_{i=1}^{m}x_i = e\in \relint(K)$
	and any $0<\epsilon<1$, there exists $S\subseteq [m]$ with $|S|\leq  \alpha(\epsilon)$
	and positive scalars $(\lambda_i)_{i\in S}$, such that
	\begin{equation}
		\label{eq:approx}(1-\epsilon)e \nsd_K \sum_{i\in S}\lambda_i x_i \nsd_K (1+\epsilon)e.
	\end{equation}
\end{definition}
\begin{definition}
	\label{def:sp-fn}
	Let $K$ be a closed convex cone.
Let $\mathcal{F}_{K}$ denote the collection of all sparsification functions for $K$. 
	The \emph{sparsification function of $K$}, denoted $\sparse_K:(0,1)\rightarrow \RR$, is defined by
	\[ \sparse_K(\epsilon) = \inf_{\alpha\in \mathcal{F}_K} \alpha(\epsilon).\]
\end{definition}
Note that the pointwise infimum of monotonically non-increasing upper semi-continuous functions is again a monotonically non-increasing upper semi-continuous function. Therefore, $\sparse_K$ is a well-defined sparsification function for $K$. 

It follows directly from Carath\'{e}odory's theorem for cones that $\sparse_K(\epsilon) \leq \dim(K)$ for all $\epsilon\in (0,1)$. (See Lemma~\ref{lem:dim-bound} in Section~\ref{sec:basics}.)  Sparsification functions are 
interesting if, for a non-trivial range of $\epsilon$, they give bounds that are smaller than the dimension of the cone. In such cases 
there is a meaningful trade-off between the approximation error (quantified by $\epsilon$) and the amount that conic 
 combinations of points can be sparsified.

The linear-sized spectral sparsification Theorem (Theorem~\ref{thm:bss}), when stated in the language of Definition~\ref{def:sp-fn}, says that the sparsification function 
of $\cS_+^d$ satisfies $\sparse_{\cS_+^d}(\epsilon) \leq \lceil 4d/\epsilon^2\rceil$. If  $\sqrt{\frac{8}{d+1}} < \epsilon < 1$ then $\sparse_{\cS_+^d}(\epsilon)$
is smaller than the trivial bound of $\dim(\cS_+^d) = \binom{d+1}{2}$ from Carath\'{e}odory's theorem.  

\subsection{Contributions}

This paper develops methods to give upper bounds on sparsification functions of closed convex cones, 
with a focus on bounds that are potentially much smaller than the dimension of the cone. While the results are constructive, 
the emphasis is on the existence of constructions rather than efficient algorithms.
The most general result in this direction applies to arbitrary closed, pointed, full-dimensional 
convex cones (which we refer to as \emph{proper cones}) 
and is the subject of Section~\ref{sec:sc}. 
It gives a bound on the sparsification function that depends only on the barrier parameter 
of a logarithmically homogeneous self-concordant barrier for the cone. 
(See Section~\ref{sec:prelim-sc} for definitions of these terms.)
\begin{theorem}
	\label{thm:gen-sp}
	If $K$ is a proper cone with a $\nu$-logarithmically
	homogeneous self-concordant barrier then $\sparse_K(\epsilon) \leq \lceil (4\nu/\epsilon)^2 \rceil$ for all $\epsilon\in (0,1)$. 
\end{theorem}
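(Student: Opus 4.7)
The plan is to construct the sparsifier by randomized sampling with probabilities proportional to $\langle -f'(e), x_i\rangle$, then translate the resulting second-moment estimate in the Hessian norm of the barrier into a two-sided cone inequality via a Dikin-ellipsoid argument combined with the scaling properties of log-homogeneity.

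Let $f$ denote the $\nu$-logarithmically homogeneous self-concordant barrier for $K$, and write $g := -f'(e)$ and $H := f''(e)$. I will use three standard consequences of log-homogeneity: $He = g$, $\langle g, e\rangle = \nu$, and $\|x\|_H \leq \langle g, x\rangle$ for every $x \in K$. Since $\sum_i \langle g, x_i\rangle = \langle g, e\rangle = \nu$, the quantities $p_i := \langle g, x_i\rangle/\nu$ form a probability distribution on $[m]$ (with $p_i = 0$ precisely when $x_i = 0$, so such indices can be discarded). I draw $k := \lceil (4\nu/\epsilon)^2\rceil$ iid samples $i_1,\ldots,i_k$ from this distribution and form the unbiased estimator $\hat e := \frac{1}{k}\sum_{j=1}^{k} x_{i_j}/p_{i_j}$. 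A direct second-moment calculation gives
\[ \EE\bigl[\|\hat e - e\|_H^2\bigr] = \frac{1}{k}\Bigl(\sum_i \frac{\|x_i\|_H^2}{p_i} - \nu\Bigr) \leq \frac{\nu^2}{k} \leq \frac{\epsilon^2}{16}, \]
where the key inequality is $\|x_i\|_H^2 \leq \langle g, x_i\rangle^2 = \nu^2 p_i^2$ summed over $i$. Markov's inequality then supplies a realization with $\|\hat e - e\|_H \leq \epsilon$.

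The final step is to convert this Hessian bound into $(1-\epsilon)e \nsd_K \hat e \nsd_K (1+\epsilon)e$, which is equivalent to $\epsilon e \pm h \in K$ for $h := \hat e - e$. Log-homogeneity gives $f''(\epsilon e) = H/\epsilon^2$, hence $\|h\|_{f''(\epsilon e)} = \|h\|_H/\epsilon \leq 1$, and the Dikin ellipsoid at $\epsilon e$ then contains $\epsilon e \pm h$, placing both in $K$. Taking $S$ to be the set of distinct sampled indices gives $|S| \leq k = \lceil(4\nu/\epsilon)^2\rceil$, and the induced weights $\lambda_i = (k p_i)^{-1}|\{j : i_j = i\}|$ are positive.

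The main obstacle is that the only structural link between $\|x\|_H$ and a scalar that self-concordance supplies for every $x \in K$ is the crude inequality $\|x\|_H \leq \langle g, x\rangle$, and this is what forces the variance bound $\nu^2/k$ rather than the $\nu/k$ one would need to recover the BSS rate of $4\nu/\epsilon^2$. Closing this gap presumably requires either extra structure on the cone (as in the subsequent sections of the paper) or a deterministic BSS-style potential-function argument, neither of which is available from generic log-homogeneous self-concordance.
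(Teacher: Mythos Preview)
Your proof is correct and is a genuinely different argument from the paper's, though the two share the same underlying geometry.

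The paper's proof is deterministic: it rescales each $x_i$ by $1/p_i$ (in your notation), observes that the rescaled points $\tilde{x}_i$ all satisfy $\|\tilde{x}_i\|_e \leq -D\phi(e)[\tilde x_i] = \nu$, and then runs the Frank--Wolfe algorithm on $f(z)=\tfrac12\|z-e\|_e^2$ over $\conv\{\tilde{x}_1,\ldots,\tilde{x}_m\}$. The curvature constant is bounded by $4\nu^2$, so after $\lceil 16\nu^2/\epsilon^2\rceil$ iterations the iterate is $\epsilon$-close to $e$ in Hessian norm and is supported on at most that many $\tilde{x}_i$. The final conversion to a two-sided cone inequality is exactly your Dikin-ellipsoid step (the paper packages it as $|u|_e\leq\|u\|_e$).

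You instead use Maurey-type importance sampling with the same weights $p_i$, obtaining the variance bound $\EE\|\hat e-e\|_H^2\leq \nu^2/k$ from the same inequality $\|x_i\|_H\leq \nu p_i$. Both approaches exploit the identical normalization insight and the identical Hessian-to-cone conversion; they differ only in whether the sparse combination is built greedily (Frank--Wolfe) or by iid sampling plus the probabilistic method. Your argument is shorter and arguably more transparent about where the $\nu^2$ comes from; the paper's argument is deterministic and constructive, which matters if one cares about exhibiting the sparsifier explicitly. Your closing remark about why $\nu^2$ rather than $\nu$ is unavoidable here is exactly right and matches the paper's narrative.

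One cosmetic point: once you have $\EE\|\hat e - e\|_H^2 \leq \epsilon^2/16$, you do not need Markov's inequality---some realization achieves at most the expectation, giving $\|\hat e - e\|_H \leq \epsilon/4$ directly.
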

Section~\ref{sec:sc-egs} considers examples where this bound on the sparsification function is independent of the dimension of $K$, and so significantly improves over the bound from Carath\'eodory's theorem. However, in the case of the positive semidefinite cone, it is known that $d$ 
is the smallest possible parameter for a logarithmically
homogeneous self-concordant barrier for $\cS_+^d$~\cite{nesterov}. Therefore, when
specialized to the positive semidefinite cone, Theorem~\ref{thm:gen-sp}
cannot improve upon the trivial dimension-based bound from Carath\'{e}odory's theorem, 
let alone match Theorem~\ref{thm:bss}.

The next contribution of this paper, and the focus of Section~\ref{sec:hyp-sparse}, 
is a bound on the sparsification function of 
proper cones that admit a $\nu$-logarithmically-homogeneous self-concordant barrier 
with certain additional properties. Such barriers will be called \emph{pairwise-self-concordant} barriers in this paper, see Definition~\ref{def:ssc}.
Remarkably, for pairwise-self-concordant barriers there is a bound on the sparsification function
that is linear in $\nu$, the barrier parameter.
This result directly generalizes Theorem~\ref{thm:bss}, and is established by suitably generalizing 
the barrier method originally developed in~\cite{batson2012twice}. 
\begin{theorem}
	\label{thm:gen-sp}
	If $K$ is a proper cone with a $\nu$-logarithmically
	homogeneous pairwise-self-concordant barrier then $\sparse_K(\epsilon) \leq \lceil 4\nu/\epsilon^2\rceil$ for all $\epsilon\in (0,1)$. 
\end{theorem}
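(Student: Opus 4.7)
The plan is to generalize the iterative barrier-pushing method that Batson, Spielman, and Srivastava developed for Theorem~\ref{thm:bss}. Given a $\nu$-logarithmically homogeneous pairwise-self-concordant barrier $f$ for $K$ and the target $e \in \relint(K)$, I would introduce two potential functions
\[ \Phi_\ell(y; \ell) = \langle -\nabla f(y - \ell e), e\rangle \quad \text{and} \quad \Phi_u(y; u) = \langle -\nabla f(u e - y), e\rangle, \]
defined on $\{(y,\ell) : y - \ell e \in \interior(K)\}$ and $\{(y,u) : u e - y \in \interior(K)\}$ respectively. By the usual barrier properties, each diverges to $+\infty$ as $y$ approaches the corresponding boundary of the ``$K$-interval'' $\{z : \ell e \nsd_K z \nsd_K u e\}$. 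The sparsifier is built incrementally: start from $y^{(0)} = 0$ with $\ell^{(0)} < 0 < u^{(0)}$, and at each step pick an index $i_k \in [m]$ and a weight $\lambda_k > 0$, updating $y^{(k+1)} = y^{(k)} + \lambda_k x_{i_k}$ while shifting the bounds by fixed positive increments $\delta_\ell$ and $\delta_u$. The invariant to maintain is that both potentials stay bounded above by their initial values, which automatically guarantees $\ell^{(k)} e \nsd_K y^{(k)} \nsd_K u^{(k)} e$ strictly throughout.

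The crucial step is to show that at every iteration there exist some $i \in [m]$ and $\lambda > 0$ for which the combined update leaves both $\Phi_\ell$ and $\Phi_u$ non-increasing. In the PSD case with rank-one $X_i$, the Sherman--Morrison identity expresses the change of each potential as an explicit rational function of $\lambda$. I expect pairwise-self-concordance (Definition~\ref{def:ssc}) to play precisely this role in general: it should provide a one-sided inequality controlling $f(z + \lambda h)$ or $f(z - \lambda h)$ in terms of the linear quantity $\langle \nabla f(z), h\rangle$ and the Hessian quantity $\langle \nabla^2 f(z) h, h\rangle$, valid \emph{globally} for $h \in K$ rather than only infinitesimally as under ordinary self-concordance. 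Averaging the resulting inequality over $i = 1,\ldots,m$ with uniform weights and exploiting $\sum_i x_i = e$ together with the logarithmic homogeneity identities (notably $\nabla^2 f(z)\, z = -\nabla f(z)$) should show that the uniform-weight expectation of the change in $\Phi_\ell + \Phi_u$ is non-positive, so some individual $i$ must work.

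The terminal accounting is parameter-juggling. Choose $\ell^{(0)}$, $u^{(0)}$ and the increments $\delta_\ell, \delta_u$ (each scaled by $\nu$) so that the potentials stay bounded throughout, and so that after $T = \lceil 4\nu/\epsilon^2 \rceil$ iterations the endpoints satisfy $u^{(T)}/\ell^{(T)} \leq (1+\epsilon)/(1-\epsilon)$. Rescaling the final sum by $2/(u^{(T)} + \ell^{(T)})$ then produces a conic combination of at most $T$ of the $x_i$ that lies in the $K$-interval $[(1-\epsilon)e, (1+\epsilon)e]$, which is the required $\epsilon$-sparsifier.

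The main obstacle will be the averaging step. In the PSD case it reduces to a clean matrix-analytic calculation using Sherman--Morrison and positivity of the $X_i$. In the general conic setting, the corresponding estimate requires the global inequality furnished by pairwise-self-concordance to interact correctly with $\langle \nabla f(\cdot), e\rangle$ once the uniform average over $i$ is taken against the constraint $\sum_i x_i = e$. Extracting the linear-in-$\nu$ bound $4\nu/\epsilon^2$, rather than the weaker $(4\nu/\epsilon)^2$ afforded by ordinary self-concordance, is precisely where the extra pairwise-self-concordance hypothesis must be used decisively to control the quadratic term in the update.
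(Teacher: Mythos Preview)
Your proposal is correct and follows essentially the same approach as the paper: define upper and lower potentials $\Phi^{u,e}(x)=-D\phi(ue-x)[e]$ and $\Phi_{\ell,e}(x)=-D\phi(x-\ell e)[e]$, run the BSS iteration with fixed shifts $\delta_u=1+\epsilon/2$, $\delta_\ell=1-\epsilon/2$ starting from $u_0=-\ell_0=2\nu/\epsilon$, and use an averaging argument over $\sum_i x_i=e$ to produce at each step some $j$ and $\alpha>0$ keeping both potentials non-increasing. The one place where your sketch is imprecise is the mechanism replacing Sherman--Morrison: the paper does not extract a direct global inequality on $f(z\pm\lambda h)$, but rather uses pairwise-self-concordance to prove a Hessian control bound $D^2\phi(x\pm tv)[u,v]\lessgtr (1\mp t|v|_x)^{-2}D^2\phi(x)[u,v]$ for $u,v\in K$, and then integrates this via Taylor's theorem with integral remainder to bound the change in $D\phi(\cdot)[e]$; the averaging step then compares two \emph{linear} functionals $U[\cdot]$ and $L[\cdot]$ at $e$ rather than literally averaging the change in $\Phi_\ell+\Phi_u$.
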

Hyperbolic polynomials are multivariate polynomials with real coefficients that enjoy certain
real-rootedness properties (see Section~\ref{sec:prelim-hyp-poly} for the definition). Associated
with a hyperbolic polynomial is a convex cone called a hyperbolicity cone. 
Hyperbolicity cones include the positive 
semidefinite cone as a special case. If $p$ is a hyperbolic polynomial of degree $d$, 
then it is well-known that $-\log p$ is a $d$-logarithmically homogeneous self-concordant barrier for the associated hyperbolicity cone~\cite{guler1997hyperbolic}. 
We will see that such hyperbolic barriers are, in fact, pairwise-self-concordant. As a corollary we obtain a generalization of Theorem~\ref{thm:bss} for hyperbolicity cones.
\begin{corollary}
	\label{thm:hyp-sparse}
	Suppose that $p$ is a hyperbolic polynomial of degree $d$, hyperbolic with respect to $e$. 
	Let $\Lambda_+$ denote the associated hyperbolicity cone. Then $\sparse_{\Lambda_+}(\epsilon) \leq \lceil 4d/\epsilon^2\rceil $.
\end{corollary}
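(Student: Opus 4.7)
The plan is to deduce the corollary immediately from Theorem \ref{thm:gen-sp} (the pairwise-self-concordant version) once one shows that the standard hyperbolic barrier $F := -\log p$ is a $d$-logarithmically homogeneous \emph{pairwise}-self-concordant barrier for $\Lambda_+$. That $F$ is a $d$-logarithmically homogeneous self-concordant barrier is Güler's classical result \cite{guler1997hyperbolic}, so the entire content of the corollary reduces to upgrading ordinary self-concordance to pairwise self-concordance (Definition \ref{def:ssc}).

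My strategy for this upgrade is to exploit the hyperbolic eigenvalue structure. For $x\in \interior{\Lambda_+}$ and $y\in\spn(\Lambda_+)$, let $\lambda_1(y;x),\ldots,\lambda_d(y;x)$ denote the roots of the univariate real-rooted polynomial $t\mapsto p(x-ty)$; these play the role of generalized eigenvalues of $y$ relative to $x$. The low-order directional derivatives of $F$ at $x$ are power sums of these eigenvalues, so that
\[
D^2F(x)[y,y] = \sum_{i=1}^d \lambda_i(y;x)^2,\qquad D^3F(x)[y,y,y] = -2\sum_{i=1}^d \lambda_i(y;x)^3,
\]
and similar identities (via polarization) hold for the mixed multilinear forms. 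In this language, the pairwise-self-concordance condition translates into a multilinear inequality among mixed hyperbolic eigenvalues at a single fixed point $x$.

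To verify this inequality I plan to appeal to Gårding's theorem: the polarization $M_p$ of $p$ is a symmetric $d$-linear form that is non-negative on $\Lambda_+^d$ and satisfies a reverse Cauchy--Schwarz inequality when one of its arguments is fixed inside $\Lambda_+$. Setting a number of arguments equal to $x$ and letting the remaining ones vary produces exactly the bilinear and trilinear Cauchy--Schwarz-type inequalities needed to match the statement of pairwise-self-concordance. The identity $p(x+ty)=p(x)\prod_{i=1}^d(1+t\lambda_i(y;x))$ lets me translate these multilinear statements back into identities about power sums of hyperbolic eigenvalues.

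The main obstacle is precisely the fact that, unlike the positive semidefinite cone, a hyperbolicity cone lacks a full trace/algebra structure: every inequality that in the PSD setting reduces to a calculation with $\tr(A_1A_2A_3)$ has to be re-derived here purely from the polynomial identity above and from Gårding's inequality. However, once the Gårding/polarization route is set up carefully, these inequalities follow without further surprises. With pairwise-self-concordance of $-\log p$ established with barrier parameter $\nu=d$, Theorem \ref{thm:gen-sp} immediately yields $\sparse_{\Lambda_+}(\epsilon)\leq \lceil 4d/\epsilon^2\rceil$.
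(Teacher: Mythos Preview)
Your overall plan---show that $-\log p$ is a $d$-logarithmically homogeneous pairwise-self-concordant barrier and then invoke Theorem~\ref{thm:gen-sp}---is exactly the paper's route. The difference lies entirely in how pairwise-self-concordance is verified.

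The paper avoids mixed hyperbolic eigenvalues altogether. It first proves (Lemma~\ref{lem:mono-cond}) that pairwise-self-concordance follows from operator monotonicity of the univariate function $t\mapsto D\phi(x+tu)[v]$ for $u,v\in K$, via the L\"owner integral representation. Since this function depends only on the restriction of $p$ to the three-dimensional subspace $\spn\{x,u,v\}$, the Helton--Vinnikov theorem reduces the question to the positive semidefinite case, where operator monotonicity is the explicit calculation $h(t)=-\tr[(I+tU_X)^{-1}V_X]$ (Lemma~\ref{lem:psd-mono} and Corollary~\ref{cor:hyp-mono}). No mixed multilinear forms of $-\log p$ are ever manipulated directly.

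Your proposed route via G\aa rding's polarization inequality faces an obstacle you may be underestimating. The identities $D^2F(x)[y,y]=\sum_i\lambda_i(y;x)^2$ and $D^3F(x)[y,y,y]=-2\sum_i\lambda_i(y;x)^3$ are correct, but polarizing them does \emph{not} produce usable formulas for $D^2F(x)[u,v]$ or $D^3F(x)[v,u,u]$: the eigenvalues $\lambda_i(u+v;x)$ are not linear in $u,v$, so the polarized power sums do not decouple. Moreover, the target upper bound involves $|u|_x=\max_i\lambda_i(u;x)$, a max-type quantity that G\aa rding's reverse Cauchy--Schwarz on the symmetric multilinear form $M_p$ does not naturally produce. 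The paper does note in passing that a direct argument ``using similar techniques to those employed to show that hyperbolic barriers have the negative curvature property'' is possible, but G\"uler's negative-curvature proof proceeds via univariate real-rootedness and interlacing rather than via G\aa rding's polarization. So your high-level strategy is sound, but the specific tool you propose is likely not the right one; the paper's operator-monotonicity plus Helton--Vinnikov reduction is both cleaner and avoids these difficulties.
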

By taking the hyperbolic polynomial $p$ to be the determinant restricted to $d\times d$ symmetric matrices (a polynomial of degree $d$), the associated hyperbolicity cone is the positive semidefinite cone $\cS_+^d$. Theorem~\ref{thm:bss} is, therefore, a special case of Corollary~\ref{thm:hyp-sparse}.

Section~\ref{sec:basics} considers how some basic convex geometric operations affect sparsification functions. (See Section~\ref{sec:prelim-cvx-geom} for notation and terminology related to convex geometry.) 
The main result in this direction is that sparsification functions interact nicely with the following
notion of lifted representations (or \emph{extended formulations}) of convex cones~\cite{gouveia2013lifts,fawzi2022lifting}.
\begin{definition}
	\label{def:lift}
	Let $C$ and $K$ be closed convex cones. We say that $C$ has a \emph{$K$-lift} if 
	there is a linear subspace $L\subseteq \textup{span}(K)$ and a linear map 
	$\pi:\textup{span}(K)\rightarrow \textup{span}(C)$ such that $C = \pi(L\cap K)$. If, in addition,
	$L \cap \relint(K)$ is non-empty, we say that $C$ has a \emph{proper $K$-lift}.
\end{definition}
The following result is established in Section~\ref{sec:basics}. 
\begin{proposition}
	\label{prop:lifts}
	Let $C$ and $K$ be closed convex cones.
	If $C$ has a proper $K$-lift then $\sparse_C(\epsilon) \leq \sparse_K(\epsilon)$ for all $\epsilon\in (0,1)$. 
\end{proposition}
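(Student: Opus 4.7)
The plan is to transfer sparsifications from $K$ to $C$ through the lift. Given $x_1,\ldots,x_m\in C$ with $\sum_i x_i=e\in\relint(C)$, I would choose lifts $y_i\in L\cap K$ with $\pi(y_i)=x_i$, apply the sparsification property of $K$ to the $y_i$, and project the resulting cone inequalities back through $\pi$. The projection step is valid because for any $a,b\in L$ with $a\preceq_K b$, the difference $b-a$ lies in $L\cap K$, so $\pi(b-a)=\pi(b)-\pi(a)\in\pi(L\cap K)=C$, giving $\pi(a)\preceq_C\pi(b)$.

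The main obstruction is that the lifted sum $f=\sum_i y_i\in L\cap K$ need not lie in $\relint(K)$, even though $\pi(f)=e\in\relint(C)$, so the sparsification property of $K$ does not apply directly. To resolve this I would perturb the lift. The proper-lift hypothesis $L\cap\relint(K)\neq\emptyset$ together with the standard identities $\relint(L\cap K)=L\cap\relint(K)$ and $\pi(\relint(L\cap K))=\relint(\pi(L\cap K))=\relint(C)$ yields some $y^*\in L\cap\relint(K)$ with $\pi(y^*)=e$. For $\delta>0$, set $y_i^\delta=y_i+(\delta/m)y^*$ and $f^\delta=f+\delta y^*$. Since $K+\relint(K)\subseteq\relint(K)$, the perturbed points $y_i^\delta$ and their sum $f^\delta$ all lie in $L\cap\relint(K)$, while $\pi(y_i^\delta)=x_i+(\delta/m)e$ and $\pi(f^\delta)=(1+\delta)e$. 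Applying the sparsification of $K$ to $\{y_i^\delta\}$ and projecting via $\pi$ produces $S^\delta\subseteq[m]$ with $|S^\delta|\leq\sparse_K(\epsilon)$ and positive weights $\lambda_i^\delta$ satisfying
\[ (1-\epsilon)(1+\delta)\,e \preceq_C \sum_{i\in S^\delta}\lambda_i^\delta x_i + \tfrac{\delta}{m}\Bigl(\textstyle\sum_{i\in S^\delta}\lambda_i^\delta\Bigr) e \preceq_C (1+\epsilon)(1+\delta)\,e. \]

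To remove the perturbation, I send $\delta\downarrow 0$. Since $[m]$ has only finitely many subsets, I pass to a subsequence along which $S^{\delta_k}=S$ is fixed. A compactness argument then yields a further subsequence along which $\lambda_i^{\delta_k}\to\lambda_i\in[0,\infty)$ for each $i\in S$: this uses the cone sandwich $\sum_S\lambda_i^{\delta_k}y_i^{\delta_k}\preceq_K(1+\epsilon)f^{\delta_k}$, paired with a strictly positive linear functional on $K$ (reducing to the pointed case by quotienting out the lineality space of $K$ if necessary), to bound $\sum_S\lambda_i^{\delta_k}$. In the limit the perturbation term $(\delta/m)\Lambda^\delta e$ vanishes, and closedness of $C$ yields $(1-\epsilon)e\preceq_C\sum_S\lambda_i x_i\preceq_C(1+\epsilon)e$. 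Dropping indices where $\lambda_i=0$ preserves the bound $|S|\leq\sparse_K(\epsilon)$, giving the desired sparsifier.

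The hard part is this final compactness step. The conceptual lift--sparsify--project scheme is clean and the perturbation trick is forced, but controlling the weights $\lambda_i^\delta$ uniformly as $\delta\downarrow 0$ is what allows one to pass from $(1+\delta)e$-sparsifications of the perturbed sum to a genuine $e$-sparsification of the original; without such control the limit argument breaks down.
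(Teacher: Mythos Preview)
Your approach is correct in essence but differs from the paper's in two notable ways. The paper factors the proof into two steps: first $\sparse_{K\cap L}\leq\sparse_K$ (easy, since $L$ meets $\relint(K)$), then $\sparse_{\pi(K')}\leq\sparse_{K'}$ for any linear map $\pi$ and closed cone $K'$ (the substantive lemma). For the projection step, when the lifted sum $e'=\sum_i x_i'$ lands on the boundary of $K'$, the paper does not perturb all the points. Instead it adjoins a single extra point $x'_{m+1}=\delta\tilde e$ with $\tilde e\in\relint(K')$ and $\pi(\tilde e)=e$, applies the sparsification property of $K'$ at the slightly tighter level $\epsilon/(1+2\delta)$, and then exploits an exposing functional $\xi\in(K')^*$ that vanishes on the minimal face through $e'$ to pin down $\lambda_{m+1}$ to the interval $[1-\tfrac{\epsilon}{1+2\delta},\,1+\tfrac{\epsilon}{1+2\delta}]$. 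After projecting, this yields for each $\delta>0$ an $\epsilon$-sparsifier of the original data of size at most $\sparse_{K'}(\epsilon/(1+2\delta))$, and the conclusion follows from the upper-semicontinuity built into the definition of sparsification functions---no compactness of weights is needed. Your perturbation-plus-compactness route works too, but the compactness step is precisely where the definition's regularity (monotone, upper-semicontinuous) was designed to help; the paper's argument shows how to cash that in. One small caution on your side: bounding $\sum_{i\in S}\lambda_i^{\delta_k}$ via a strictly positive functional $\xi$ on $K$ really relies on $\xi(y_i)>0$ for each $i\in S$, which requires $y_i\notin K\cap(-K)$; your ``quotient out the lineality space'' remark does not by itself guarantee this (if some $y_i$ lies in the lineality space it becomes zero in the quotient), though it can be arranged by first discarding any $x_i$ in the lineality space of $C$ and choosing lifts accordingly.
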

The main difficulty in establishing Proposition~\ref{prop:lifts} is to show that if $C = \pi(K)$ where $C$ and $K$ are closed convex cones and $\pi$ is a linear map, then the sparsification function of $C$ is bounded above by the sparsification function of $K$ (Lemma~\ref{lem:proj}).

It is natural to consider whether Proposition~\ref{prop:lifts} can be used in combination with either 
Theorem~\ref{thm:gen-sp} (for general convex cones) or Corollary~\ref{thm:hyp-sparse} (for hyperbolicity cones) 
to obtain further results. If $C$ has a proper $K$-lift then any $\nu$-logarithmically homogeneous self-concordant barrier for $K$ can be used to construct a $\nu$-logarithmically homogeneous self-concordant barrier for $C$ via the implicit barrier theorem~\cite[Theorem 5.3.6]{nesterov}. Therefore, combining Proposition~\ref{prop:lifts} with Theorem~\ref{thm:gen-sp} does not yield any further results. However, Proposition~\ref{prop:lifts} can be used profitably in combination with Corollary~\ref{thm:hyp-sparse}, to give the following result.
\begin{corollary}
	\label{cor:hyp-lift}
	Suppose that $p$ is a hyperbolic polynomial of degree $d$, hyperbolic with 
	respect to $e$ with associated 
	hyperbolicity cone $\Lambda_+$. 
	Let $C$ be a closed convex cone. If $C$ has a $\Lambda_+$-lift then $\sparse_C(\epsilon) \leq \lceil 4d/\epsilon^2\rceil $ for all $\epsilon\in (0,1)$. 
\end{corollary}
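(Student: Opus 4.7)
The plan is to deduce the corollary by chaining Proposition~\ref{prop:lifts} with Corollary~\ref{thm:hyp-sparse}: Corollary~\ref{thm:hyp-sparse} controls $\sparse_{\Lambda_+}(\epsilon)$, and Proposition~\ref{prop:lifts} transports such a bound across a lift. If the given $\Lambda_+$-lift $C = \pi(L\cap \Lambda_+)$ is proper, i.e., $L \cap \relint(\Lambda_+)$ is non-empty, then the two results compose immediately to give $\sparse_C(\epsilon) \leq \sparse_{\Lambda_+}(\epsilon) \leq \lceil 4d/\epsilon^2\rceil$, which is exactly the claim.

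The subtlety is that Definition~\ref{def:lift} allows $L \cap \Lambda_+ \subseteq \partial \Lambda_+$, in which case Proposition~\ref{prop:lifts} does not apply directly. My first step would be to reduce to the proper case by passing to a face. Let $F$ be the minimal face of $\Lambda_+$ containing $L\cap \Lambda_+$. Because $F$ is a face of $\Lambda_+$, we have $L\cap \Lambda_+ = L\cap F$, and by minimality $L$ meets $\relint(F)$. Hence $C = \pi(L\cap F)$ is a proper $F$-lift of $C$, and Proposition~\ref{prop:lifts} reduces the corollary to establishing a bound of the form $\sparse_F(\epsilon) \leq \lceil 4d/\epsilon^2\rceil$ for arbitrary faces $F$ of $\Lambda_+$.

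The main obstacle is therefore the face case. The most natural route is to realize $F$ as the hyperbolicity cone of a hyperbolic polynomial of degree at most $d$ (obtained from $p$ by taking a sequence of directional derivatives along a point of $\relint(F)$), after which Corollary~\ref{thm:hyp-sparse} applies to $F$ directly. Pinning down this structural correspondence between faces of $\Lambda_+$ and derivative hyperbolicity cones is the delicate step. An alternative I would keep in reserve is a limit argument that avoids naming a barrier for $F$: given $\sum x_i = e \in \relint(C)$, lift to $\sum \tilde{x}_i = \tilde{e} \in \relint(F)$, perturb by adding $\delta\,e_0$ for a fixed $e_0 \in \relint(\Lambda_+)$ so that the perturbed sum lands in $\relint(\Lambda_+)$, apply Corollary~\ref{thm:hyp-sparse} to produce a sparsifier of size at most $\lceil 4d/\epsilon^2\rceil$, and extract a convergent subsequence as $\delta \to 0$ using compactness of the simplex of index-weighted subsets and the closedness of $\Lambda_+$. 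Pushing the resulting inequalities through $\pi$ (noting $\pi(F) \subseteq C$ and hence $\nsd_{\pi(F)} \Rightarrow \nsd_C$) then yields the required sparsifier for $C$.
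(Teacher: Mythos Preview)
Your main line of attack---reduce to the minimal face $F$ of $\Lambda_+$ containing $L\cap\Lambda_+$, observe that $C$ then has a proper $F$-lift, and invoke the fact that every face of a hyperbolicity cone is itself the hyperbolicity cone of a hyperbolic polynomial of degree at most $d$---is exactly the argument the paper gives. The paper cites \cite[Corollary~3.4]{lourencco2024hyperbolicity} for the faces-are-hyperbolicity-cones step rather than deriving it via directional derivatives, but otherwise your primary route matches.
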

This result is interesting because, in general, there are convex cones that have a $\Lambda_+$-lift for some hyperbolicity cone but that are not, themselves, hyperboliticy cones. One way to see this is to observe that hyperbolicity cones are always facially exposed~\cite{renegar2006hyperbolic}, 
but general convex cones with $\cS_+^d$-lifts---let alone hyperbolicity cone lifts---need not be facially exposed~\cite{ramana1995some}.

The astute reader will notice that Corollary~\ref{cor:hyp-lift} is not quite an immediate corollary of Theorem~\ref{thm:hyp-sparse} and Proposition~\ref{prop:lifts}. This is because Corollary~\ref{cor:hyp-lift} only assumes that $C$ has a $\Lambda_+(p,e)$-lift, not a \emph{proper} $\Lambda_+(p,e)$-lift. The proof of Corollary~\ref{cor:hyp-lift} relies on certain properties of the faces of hyperbolicity cones, and is given in Section~\ref{sec:hyp-lift-bounds}.

\subsection{Related work}
Since the work of Bencz\'ur and Karger~\cite{benczur1996approximating} and the spectral sparsification formulation of Spielman and Teng~\cite{spielman2004nearly}, 
many variations and extensions on the problems of cut and spectral sparsification have been studied. This section briefly discusses a selection of such generalizations, focusing
on existence results rather than algorithmic advances. 

Spectral-like sparsification models have been developed for combinatorial objects such as linear codes~\cite{khanna2024code}, hypergraphs~\cite{soma2019spectral}, or constraint satisfaction problems~\cite{khanna2025theory}. For instance, given a hypergraph with $m$ hyperedges $E_1,E_2,\ldots,E_m$ each of which is a subset of $[n]$,
the aim is to find $S\subseteq [m]$ and positive scalars $(\lambda_i)_{i\in S}$ such that
\[ (1-\epsilon)\sum_{i\in [m]} \max_{j,k\in E_i }(x_j - x_k)^2 \leq \sum_{i\in S} \lambda_i \max_{j,k\in E_i }(x_j - x_k)^2 \leq (1+\epsilon)\sum_{i\in [m]} \max_{j,k\in E_i }(x_j - x_k)^2\quad\textup{for all $x\in \RR^n$}.\]
This reduces to the spectral sparsification model for graphs when all hyperedges have degree two, but appears to be 
structurally different to the model in Definition~\ref{def:sp-fn} in general. In this setting spectral hypergraph sparsifiers exist with $|S|$ of size $O(\epsilon^{-2}\log(D)n\log n)$ where $D$ is the maximum size of a hyperedge~\cite{lee2023spectral,jambulapati2023chaining}.

Another family of sparsification problems comes from approximating sums of (semi-)norms on $\RR^n$~\cite{jambulapati2023sparsifying}.
Given a collection $N_1,N_2,\ldots,N_m$ of semi-norms, the aim is to find $S\subseteq[m]$ and positive scalars $(\lambda_i)_{i\in S}$ such that 
\[ (1-\epsilon) \sum_{i\in [m]}N_i(x) \leq \sum_{i\in S}\lambda_i N_i(x) \leq (1+\epsilon)\sum_{i\in [m]}N_i(x)\;\;\textup{for all $x\in \RR^n$}.\]
This generalizes the problem of approximating a centrally symmetric zonotope with $m$ generators (or, more generally, a zonoid) 
by a centrally symmetric zonotope with a small number of generators (see, e.g.,~\cite{bourgain1989approximation}). This is because the support function 
of the zonotope $[-a_1,a_1] + \cdots + [-a_m,a_m]$ (for $a_1,a_2,\ldots,a_m\in \RR^n$) is $\sum_{i\in [m]}N_i(x)$ where $N_i(x) = |\langle a_i,x\rangle|$. 
For the general problem of sparsifying sums of norms, the set $S$ can be taken to be of size $O(\epsilon^{-2}n\log(n/\epsilon)(\log(n))^{2.5})$~\cite{jambulapati2023sparsifying}. 

Related to spectral sparsification of quadratic forms (or equivalently positive semidefinite matrices) is the Kadison-Singer problem, 
resolved by Marcus, Spielman, and Srivastava~\cite{marcus2015interlacing}. In one of its equivalent forms (see~\cite[Theorem 1.2]{paschalidis2024linear}), 
this says that given $v_1,v_2,\ldots,v_m\in \RR^n$, there is a 
partition of $[m] = T_1\cup T_2$ such that, for $j\in \{1,2\}$, 
\[ \left(\frac{1}{2} - 5\sqrt{\alpha}\right)\left(\sum_{i=1}^{m}v_iv_i^\intercal\right) \nsd \sum_{i\in T_j}v_iv_i^\intercal \nsd \left(\frac{1}{2} + 5\sqrt{\alpha}\right)\left(\sum_{i=1}^{m}v_iv_i^\intercal\right)\]
where $\alpha = \max_{j}v_j^\intercal\left(\sum_{i\in [m]}v_iv_i^\intercal\right)^+ v_j$ is the maximum of the leverage scores of the $v_j$. This result shows that 
the positive semidefinite matrix $\sum_i v_iv_i^\intercal$ can be approximated well by summing over either the terms in $T_1$ or the terms in $T_2$, without any reweighting.
On the other hand, the approximation quality depends on the leverage scores of the vectors, and is only small when all of the vectors contribute in a balanced way to the 
sum. Paschalidis and Zhuang have shown that this result implies linear sized spectral sparsification~\cite[Theorem 1.1]{paschalidis2024linear}, but not with the refined constants of Theorem~\ref{thm:bss}. 
The solution of the Kadison-Singer problem has been generalized to hyperbolicity cones by Br\"and\'en~\cite{branden2018hyperbolic}. It seems likely 
that a similar reduction could be carried 
out to deduce linear-sized spectral sparsification with respect to hyperbolicity cones, yielding a qualitatively similar result to Corollary~\ref{thm:hyp-sparse}, 
but again without the refined constants.

\section{Preliminaries}
\label{sec:prelim}

\subsection{Convex geometry}
\label{sec:prelim-cvx-geom}

If $S\subseteq \RR^n$, we use the notation $\cone(S)$ (resp., $\conv(S)$) to denote the conic hull (resp., convex hull) of $S$, i.e., the collection of all non-negative (resp., convex) combinations of elements of $S$. A convex cone $K\subseteq \RR^n$ is \emph{pointed} if $K \cap (-K) = \{0\}$, and \emph{full-dimensional} if $\spn(K) = \RR^n$. 
If $S\subseteq \RR^n$ we denote its closure with respect to the usual topology by $\closure{S}$ and its relative interior (i.e., its interior taken relative to its affine span) by $\relint{(S)}$. 
We call a closed, pointed, full-dimensional convex cone a \emph{proper cone}. Let $\langle \cdot,\cdot\rangle$ denote a fixed choice of inner product on $\RR^n$, which allows us
to identify $\RR^n$ with its dual space. 
If $S\subseteq \RR^n$ is a set then $S^* = \{y\in \RR^n\;:\; \langle y,x\rangle \geq 0,\;\;\textup{for all $x\in S$}\}$ is a closed convex cone called the \emph{dual cone} of $S$.

If $K$ is a closed convex cone, then a convex subset $F\subseteq K$ is a \emph{face} if $x, y \in K$ and $x+y\in F$ implies that $x, y \in F$. A face $F\subseteq K$ is \emph{exposed} if there exists $y\in K^*$ such that $F = \{x\in K\;:\; \langle y,x\rangle = 0\}$, i.e., if $F$ can be expressed as the intersection of $K$ with a supporting hyperplane to $K$.

\subsection{Hyperbolic polynomials and hyperbolicity cones}
\label{sec:prelim-hyp-poly}

A polynomial $p$, homogeneous of degree $d$, in $n$ variables with real coefficients is \emph{hyperbolic with respect to $e\in \RR^n$} if 
$p(e)>0$ and, for every fixed $x\in \RR^n$, if $p(te-x)=0$ then $t$ is real. A fundamental example to keep in mind is the determinant restricted to symmetric $d\times d$ matrices.
This is hyperbolic with respect to the $d\times d$ identity matrix because $p(te-x) = \det(tI - x)$ is the characteristic polynomial of the symmetric matrix $x$, which has only real roots because symmetric matrices have real eigenvalues.

If $p$ is hyperbolic with respect to $e$ and $x\in \RR^n$ then we denote by $\lambda_1^{(p,e)}(x)\geq \lambda_2^{(p,e)}(x)\geq \cdots \geq \lambda_d^{(p,e)}(x)$ 
the (real) roots of $p(te-x)=0$, and call these the \emph{hyperbolic eigenvalues of $x$ with respect to $(p,e)$}. 

Associated with any hyperbolic polynomial $p$, and direction of hyperbolicity $e$, is the \emph{hyperbolicity cone}
\[ \Lambda_+(p,e) = \{x\in \RR^n\;:\; \lambda_{i}^{(p,e)}(x) \geq 0\;\;\textup{for all $i=1,2,\ldots,d$}\},\]
which turns out to be a closed convex cone~\cite{gaarding1959inequality}. In the case of the determinant restricted to $d\times d$ symmetric matrices, the hyperbolicity cone $\Lambda_+(\det,I)$ is the cone $\cS_+^d$ of $d\times d$ positive semidefinite matrices. If $p$ is hyperbolic with respect to $e$ then $p$ is also hyperbolic with respect to any  $e'\in \interior{\Lambda_+(p,e)}$, and $\Lambda_+(p,e) = \Lambda_+(p,e')$~\cite{gaarding1959inequality}. 

If $p$ is hyperbolic with respect to $e$, we say that $p$ is \emph{complete} if $p(te-x) = t^dp(e)$ implies that $x=0$ (i.e., the only point with all hyperbolic eigenvalues 
equal to zero is $0\in \RR^n$).
If $p$ is a complete hyperbolic polynomial then it follows from the definitions that 
the associated hyperbolicity cone is pointed.

\subsection{Self-concordant barriers}
\label{sec:prelim-sc}

This section summarizes the basic definitions and facts (following~\cite{nesterov}) about logarithmically homogeneous 
self-concordant barriers for convex cones needed for subsequent developments.

For a $k$-times 
continuously differentiable function $f$ with open domain, a point $x\in \textup{dom}(f)\subseteq \RR^n$, and directions $u_1,u_2,\ldots,u_k\in \RR^n$, define
\[ D^kf(x)[u_1,\ldots,u_k] := \left.\frac{\partial^k}{\partial t_1 \cdots \partial t_k}f(x+t_1u_1 + \cdots + t_ku_k)\right|_{t_1 = \cdots = t_k =0}.\]
Note that $D^kf(x)[u_1,\ldots,u_k]$ is multilinear in the $u_i$ and that $D^kf(x)[u,\ldots,u]$ is 
the $k$th directional derivative of $f$ at $x$ in the direction $u$. 
\begin{definition}
	A three-times continuously differentiable closed convex function 
	$\phi:\textup{dom}(\phi)\rightarrow \RR$ with open domain $\textup{dom}(\phi)$ is \emph{standard self-concordant} if 
	\[ |D^3\phi(x)[u,u,u]| \leq 2 D^2\phi(x)[u,u]^{3/2}\]
	for all $x\in \textup{dom}(\phi)$ and all $u\in \RR^n$.
\end{definition}

\begin{definition}
	Suppose that $\phi$ is a standard self-concordant function and $\textup{cl}(\textup{dom}(\phi))=:K$ 
	is a closed full-dimensional 
	convex cone. We say that $\phi$ is a 
	$\nu$-\emph{logarithmically-homogeneous self-concordant barrier} for
	$K$ if  $\phi(tx) = \phi(x) - \nu\log(t)$ for all $x\in \textup{dom}(\phi)$ and all $t>0$. 
\end{definition}
Suppose that $\phi$ is a $\nu$-logarithmically-homogeneous self-concordant barrier for a closed, full-dimensional, convex cone $K$. Then we can associate with each point $e\in \textup{int}(K)$ the \emph{Hessian (semi-)norm}
\begin{equation}
\label{eq:hess-norm}
\|u\|_e := D^2\phi(e)[u,u]^{1/2}.
\end{equation}
If $K$ is a proper cone (so is, in addition, pointed), then the Hessian semi-norm is non-degenerate~\cite[Theorem 5.1.6]{nesterov}, and so is actually a norm.

For a full-dimensional closed convex cone $K$ and $x\in \interior{K}$, 
for each $u$ we can also define
an analogue of the spectral norm via
	\begin{equation}
		\label{eq:mg}
		|u|_x := \inf\{t\in \RR\;:\; -tx \nsd_K u \nsd_K tx\}.
	\end{equation}
	If $K$ is, in addition, pointed then $|u|_x = 0$ 
	if and only if $u=0$, and so $|\cdot|_x$ defines a 
	norm. 
The following result collects some standard facts about 
logarithmically homogeneous self-concordant functions and these associated norms.
\begin{lemma}
\label{lem:lhsc-prop}
Let $K\subseteq \RR^n$ be a closed 
	full-dimensional convex cone. Let $\phi$ be a $\nu$-logarithmically homogeneous self-concordant barrier for $K$. Then the following properties hold.
\begin{itemize}
	\item[(i)] \cite[Theorem 5.4.3]{nesterov} If $e\in \textup{int}(K)$ then $D\phi(e)[e] = -\nu$.
	\item[(ii)] \cite[Theorem 5.1.14]{nesterov} If $u\in K$ and $e\in \textup{int}(K)$ then $-D\phi(e)[u] \geq \|u\|_e$.
\item[(iii)] \cite[Theorem 5.1.5]{nesterov} If $e\in \textup{int}(K)$ then $\{z\in \RR^n\;:\; \|z-e\|_e \leq 1\} \subseteq K$.
\item[(iv)] If $e\in \interior{K}$ then $\|u\|_e \geq |u|_e$ for all $u$.
\end{itemize}
\end{lemma}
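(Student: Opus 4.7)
The plan is to deduce (iv) directly from (iii) by a scaling argument. Item (iii) gives a concrete sufficient condition for membership in $K$ in terms of the Hessian norm centered at $e$, and the definition of $|\cdot|_e$ asks exactly for membership of $te \pm u$ in $K$, so the two notions should match up after normalizing $u$ to have unit Hessian norm.

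First I would dispose of the case $\|u\|_e > 0$. Set $t = \|u\|_e$ and consider the two vectors $z_\pm := e \pm u/t$. By construction $\|z_\pm - e\|_e = \|u\|_e / t = 1$, so property (iii) applied to each of $z_+$ and $z_-$ yields $z_\pm \in K$. Since $K$ is a cone and $t > 0$, multiplying through by $t$ gives $te + u \in K$ and $te - u \in K$, i.e., $-te \nsd_K u \nsd_K te$. From the definition of $|u|_e$ in \eqref{eq:mg} this shows $|u|_e \leq t = \|u\|_e$.

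Next I would handle the degenerate case $\|u\|_e = 0$, which only arises when $K$ is not pointed (so the Hessian norm is genuinely a semi-norm). Here $\|su\|_e = 0 \leq 1$ for every $s \in \RR$, so applying (iii) to $e + su$ shows $e + su \in K$ for all $s$. Rescaling: for any $\tau > 0$, taking $s = \pm 1/\tau$ and multiplying by $\tau$ gives $\tau e \pm u \in K$, so $|u|_e \leq \tau$ for every $\tau > 0$, hence $|u|_e = 0 = \|u\|_e$.

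There is no real obstacle here: the whole argument is a two-line application of (iii) once one notices that the unit Hessian ball around $e$ is contained in $K$, and that scaling this statement by any positive constant is exactly what the definition of $|\cdot|_e$ requires. The only subtlety worth being explicit about is the degenerate case above, to make clear that (iv) is asserted for \emph{all} $u$ rather than only for $u$ with $\|u\|_e \neq 0$.
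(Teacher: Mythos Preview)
Your proof is correct and follows essentially the same approach as the paper: both deduce (iv) from (iii) by showing $\|u\|_e e \pm u \in K$ via the Hessian-ball inclusion. The only cosmetic difference is that the paper handles all $u$ uniformly by considering $e \pm (\|u\|_e+\epsilon)^{-1}u$ for $\epsilon>0$ and passing to the limit using closedness of $K$, whereas you normalize directly and split off the degenerate case $\|u\|_e=0$; since (iii) is stated for the \emph{closed} unit ball, your direct normalization is perfectly valid and arguably slightly cleaner.
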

\begin{proof}
	We only establish (iv), since the other results are directly from~\cite{nesterov}. 
	Let $u$ be arbitrary. To establish that $|u|_e \leq \|u\|_e$, it is enough to show that 
	\begin{equation}
		\label{eq:iv-suff}
		-\|u\|_e e\nsd_K u \nsd_K \|u\|_e e.
	\end{equation}
	To establish~\eqref{eq:iv-suff} we observe that 
	\[ \|(e\pm (\|u\|_e+\epsilon)^{-1}u) - e\|_e = \frac{\|u\|_e}{\|u\|_e+\epsilon} < 1\]
	for all $\epsilon > 0$. It follows from (iii) that $(\|u\|_e+\epsilon )e \pm u\in K$
	for all $\epsilon>0$. Taking the limit as $\epsilon\rightarrow 0$, and using the fact that $K$ is closed, establishes~\eqref{eq:iv-suff}.
\end{proof}

An important example of a logarithmically homogeneous 
self-concordant barrier is the logarithmic barrier for a hyperbolicity cone.
\begin{theorem}[{G\"uler~\cite{guler1997hyperbolic}}]
	Let $p$ be a polynomial, homogeneous of degree $d$, that is hyperbolic with respect to $e$.
	Then $-\log p$ is a $d$-logarithmically homogeneous self-concordant barrier for $\Lambda_+(p,e)$. 
\end{theorem}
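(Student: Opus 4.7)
The plan is to verify directly the four defining conditions of a $d$-logarithmically homogeneous self-concordant barrier for $\phi := -\log p$ on $\interior{\Lambda_+(p,e)}$: logarithmic homogeneity with parameter $d$, three-times continuous differentiability, the self-concordance inequality, and closedness of the convex function. Logarithmic homogeneity is immediate from $p(tx) = t^d p(x)$. Smoothness holds on the interior because $p$ is a polynomial that does not vanish there: evaluating $p(se - x) = p(e)\prod_{i=1}^d (s - \lambda_i^{(p,e)}(x))$ at $s=0$ (and comparing leading coefficients) gives $p(x) = p(e)\prod_i \lambda_i^{(p,e)}(x)$, which is positive on $\interior{\Lambda_+(p,e)}$ and vanishes on its boundary. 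This also yields the barrier behavior: $\phi \to +\infty$ as $x$ approaches the boundary, so extending $\phi$ by $+\infty$ produces a lower semi-continuous, hence closed, function.

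The main step is to prove convexity together with the self-concordance inequality by reducing everything to the univariate case along an arbitrary line. Fix $x \in \interior{\Lambda_+(p,e)}$ and $u \in \RR^n$. Since $p$ is hyperbolic with respect to every point of $\interior{\Lambda_+(p,e)}$, in particular with respect to $x$, the polynomial $s \mapsto p(sx - u)$ has leading coefficient $p(x)$ and only real roots $s_1,\ldots,s_d$, so
\[
p(sx - u) \;=\; p(x)\prod_{i=1}^d (s - s_i).
\]
Using the degree-$d$ homogeneity to rewrite $p(sx - u) = s^d p(x - u/s)$ and substituting $s = -1/t$ yields the key factorization
\[
p(x + tu) \;=\; p(x)\prod_{i=1}^d (1 + t s_i),
\]
which, as both sides are polynomials in $t$ agreeing near $t=0$, holds identically. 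Setting $g(t) := \phi(x+tu)$, for $t$ close to zero one has $g(t) = \phi(x) - \sum_i \log(1 + ts_i)$, so differentiating gives $g''(t) = \sum_i s_i^2/(1+ts_i)^2 \geq 0$ (whence $\phi$ is convex along every line, and hence convex), $g''(0) = \sum_i s_i^2$, and $g'''(0) = -2 \sum_i s_i^3$.

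The self-concordance inequality $|D^3\phi(x)[u,u,u]| \leq 2 (D^2\phi(x)[u,u])^{3/2}$ then becomes $|\sum_i s_i^3| \leq (\sum_i s_i^2)^{3/2}$, which is elementary: for $a_i := s_i^2 \geq 0$, each $a_i^{1/2} \leq (\sum_j a_j)^{1/2}$, so $\sum_i a_i^{3/2} \leq (\sum_j a_j)^{1/2} \sum_i a_i = (\sum_j a_j)^{3/2}$. The only genuinely non-trivial step, and the main obstacle, is the factorization $p(x + tu) = p(x)\prod_i(1 + ts_i)$; once it is in hand, the entire analysis reduces to that of a sum of $d$ univariate logarithmic barriers, whose second and third derivatives combine, via the power-mean-type inequality above, to give a self-concordance parameter of exactly $d$.
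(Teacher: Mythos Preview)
The paper does not give its own proof of this statement; it is quoted in the preliminaries as a known result of G\"uler~\cite{guler1997hyperbolic}. Your argument is correct and is essentially G\"uler's original proof: use hyperbolicity of $p$ with respect to $x\in\interior{\Lambda_+(p,e)}$ to obtain the factorization $p(x+tu)=p(x)\prod_{i=1}^d(1+ts_i)$, reduce $-\log p$ along each line to a sum of one-dimensional logarithmic barriers, and then verify self-concordance via the elementary inequality $\bigl|\sum_i s_i^3\bigr|\le\bigl(\sum_i s_i^2\bigr)^{3/2}$.
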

If $\Lambda_+(p,e)$ is the hyperbolicity cone associated with a hyperbolic polynomial $p$, then we refer to the $\deg(p)$-logarithmically homogeneous self-concordant barrier $-\log p$ as a \emph{hyperbolic barrier} for $\Lambda_+(p,e)$.

\subsection{Operator monotone functions}
\label{sec:prelim-opmono}

Let $J\subseteq \RR$ be an interval and let $\cS_J^d$ denote the collection of $d\times d$ symmetric
matrices with eigenvalues in $J$. Given a function $h:J\rightarrow \RR$, the associated spectral
function $h:\cS_J^d\rightarrow \cS^d$ is defined by 
\[ h(X) = \sum_{i=1}^{k} h(\lambda_i) P_{i}\]
where $\lambda_1> \cdots > \lambda_k$ are the distinct eigenvalues of $X$, and each $P_{i}$ is the orthogonal projector onto the eigenspace of $X$ corresponding to the eigenvalue $\lambda_i$. 

A function $h:J\rightarrow \RR$ is \emph{$n$-monotone} for some positive integer $n$ if $X,Y\in \cS_J^n$ and $X \psd Y$ together imply that $h(X)\psd h(Y)$. A function that is $n$-monotone 
for all positive integers $n$ is called \emph{operator monotone}. L\"owner's theorem characterizes operator monotone functions. Indeed, any operator monotone function $h:(0,\infty)\rightarrow \RR$, defined on the positive half-line, can be expressed in the form
\begin{equation}
	\label{eq:op-mono-int}
h(t) = h(1) + \int_{0}^{1}\frac{t-1}{\lambda(t-1)+1}\;d\mu(\lambda)\;\;\textup{for all $t\in (0,\infty)$} 
\end{equation}
where $\mu$ is a positive Borel measure $\mu$ supported on $[0,1]$. Note that
similar integral representations hold for operator monotone functions defined on other intervals.
Moreover, there are many different equivalent ways to write integral representations of operator monotone functions~\cite{simon2019loewner}. For a version of L\"owner's theorem written in the form of~\eqref{eq:op-mono-int}, see~\cite[Theorem 4]{fawzi2019semidefinite}.

\section{Basic results about sparsification functions}
\label{sec:basics}

\subsection{Upper bounds}
\label{sec:upper}

This section establishes some bounds on sparsification functions of  convex cones arising from 
basic convex geometric considerations.

\begin{lemma}
	\label{lem:dim-bound}
	If $K$ is a closed convex cone then $\sparse_K(\epsilon) \leq \dim(K)$ for all $\epsilon\in (0,1)$.
\end{lemma}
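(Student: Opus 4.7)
The plan is to deduce this bound directly from Carath\'eodory's theorem for cones, which states that any element of the conic hull of a set $S\subseteq\RR^n$ can be expressed as a non-negative combination of at most $\dim(\cone(S))$ elements of $S$. The key observation is that we can in fact achieve an \emph{exact} sparsification (sum equal to $e$) with at most $\dim(K)$ terms, which in particular yields an $\epsilon$-sparsification for every $\epsilon\in(0,1)$.

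First I would fix arbitrary $x_1,\ldots,x_m\in K$ with $\sum_{i=1}^m x_i = e \in \relint(K)$. Since each $x_i\in K$, the cone $\cone(\{x_1,\ldots,x_m\})$ is a subcone of $K$, so its dimension is at most $\dim(K)$. Because $e$ lies in this conic hull, Carath\'eodory's theorem for cones produces a subset $S\subseteq[m]$ with $|S|\leq \dim(K)$ and positive scalars $(\lambda_i)_{i\in S}$ such that
\[
\sum_{i\in S}\lambda_i x_i = e.
\]
For any $\epsilon\in(0,1)$, the inequalities required in Definition~\ref{def:sp-constant} then reduce to $\epsilon e \in K$ on both sides, which holds since $e\in \relint(K)\subseteq K$. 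Thus the constant function $\alpha(\epsilon) := \dim(K)$ is a sparsification function for $K$: it is trivially monotonically non-increasing and upper semi-continuous, and the argument above shows that the required subset exists for every admissible input. Taking the infimum over $\mathcal{F}_K$ yields $\sparse_K(\epsilon)\leq \dim(K)$.

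There is no real obstacle here; the only small point to be careful about is the meta-step of producing an element of $\mathcal{F}_K$ rather than merely verifying the inequality pointwise, but taking $\alpha$ constant makes the regularity conditions in Definition~\ref{def:sp-constant} immediate.
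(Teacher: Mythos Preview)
Your proof is correct and follows essentially the same approach as the paper: apply Carath\'eodory's theorem for cones to write $e$ exactly as a conic combination of at most $\dim(\cone\{x_1,\ldots,x_m\})\leq\dim(K)$ of the $x_i$, then observe that an exact representation trivially satisfies~\eqref{eq:approx} for every $\epsilon\in(0,1)$, so the constant function $\dim(K)$ lies in $\mathcal{F}_K$.
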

\begin{proof}
	Let $x_1,\ldots,x_m\in K$, let $\epsilon\in (0,1)$,  and let $e = \sum_{i=1}^{m}x_i\in \relint(K)$. 
	Let $C$ be the conic hull of the $x_i$, i.e, $C = \cone\{x_1,\ldots,x_m\}\subseteq K$. 
	By Carath\'eodory's theorem,
	there exists $S\subseteq [m]$ with $|S|\leq \dim(C)\leq \dim(K)$ and positive scalars $(\lambda_{i})_{i\in S}$
	such that $\sum_{i\in S}\lambda_i x_i = e$, and so 
	\[ (1-\epsilon)e \nsd_K e = \sum_{i\in S}\lambda_i x_i \nsd (1+\epsilon)e.\]
	It follows that $\dim(K)$ is a sparsification function for $K$, and hence that 
	$\sparse_K(\epsilon) \leq \dim(K)$ for all $\epsilon\in(0,1)$ by the definition of $\sparse_K$. 
\end{proof}
The following result shows that sparsification functions do not increase under taking the intersection 
with a subspace that meets the relative interior of a cone.
\begin{lemma}
	\label{lem:int}
 	Let $K$ be a closed convex cone and let $L$ be a linear subspace such that
	$\relint(K)\cap L\neq \emptyset$. If $C = K \cap L$ then  $\sparse_C(\epsilon) \leq \sparse_K(\epsilon)$ for all $\epsilon\in (0,1)$.
 \end{lemma}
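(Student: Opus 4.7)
The plan is to reduce sparsification in $C$ to sparsification in $K$ in a direct, almost tautological way. Given data $x_1,\ldots,x_m\in C$ with $e = \sum_i x_i \in \relint(C)$, the aim is to apply $\sparse_K$ to the same data (noting $C\subseteq K$) and then check that the resulting approximation, which is good with respect to $K$, is automatically good with respect to $C$.

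The essential input is the standard convex-analytic identity $\relint(K\cap L) = \relint(K)\cap L$, which holds precisely because $L \cap \relint(K)\neq \emptyset$ (and $\relint(L) = L$ since $L$ is a subspace). This is what lets me transfer the hypothesis $e\in\relint(C)$ into the stronger statement $e\in \relint(K)$, which is what the definition of $\sparse_K$ actually requires. Once this is in hand, I invoke the fact (recorded in the paper just after Definition~\ref{def:sp-fn}) that $\sparse_K$ is itself an element of $\mathcal{F}_K$, producing $S\subseteq [m]$ with $|S|\leq \sparse_K(\epsilon)$ and positive scalars $(\lambda_i)_{i\in S}$ satisfying $(1-\epsilon)e \nsd_K \sum_{i\in S}\lambda_i x_i \nsd_K (1+\epsilon)e$.

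The final step is to upgrade $\nsd_K$ to $\nsd_C$. Each of the differences $\sum_{i\in S}\lambda_i x_i - (1-\epsilon)e$ and $(1+\epsilon)e - \sum_{i\in S}\lambda_i x_i$ lies in $K$ by the inequalities just obtained, and lies in $L$ because $x_i,e\in L$ and $L$ is a linear subspace closed under the operations performed. Hence both differences lie in $K\cap L = C$, which gives the required $\nsd_C$-sandwich. This shows that $\epsilon\mapsto \sparse_K(\epsilon)$ is a sparsification function for $C$, so the infimum defining $\sparse_C$ gives $\sparse_C(\epsilon)\leq \sparse_K(\epsilon)$ for all $\epsilon\in(0,1)$.

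No step is genuinely difficult; the only point requiring care is the relative interior identity, which is where the hypothesis $\relint(K)\cap L\neq \emptyset$ is used. Without it the sum $e$ could lie in $\relint(C)$ while failing to be in $\relint(K)$, and the definition of $\sparse_K$ would not apply to the given data.
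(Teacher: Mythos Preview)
Your proposal is correct and follows essentially the same argument as the paper's proof: both use the identity $\relint(K\cap L)=\relint(K)\cap L$ (from the hypothesis $\relint(K)\cap L\neq\emptyset$) to place $e$ in $\relint(K)$, apply $\sparse_K$ as a sparsification function for $K$, and then observe that the differences lie in $L$ so the $\nsd_K$-inequalities upgrade to $\nsd_C$-inequalities.
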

 \begin{proof}
	 Suppose that $x_1,x_2,\ldots,x_m\in C= K \cap L$ and $\sum_{i=1}^{m}x_i = e \in \relint(K\cap L)$ and 
	 $0<\epsilon<1$. 
	 Since $\relint(K)\cap L \neq \emptyset$ it follows that $\relint(K)\cap L = \relint(K \cap L)$~\cite[Corollary 6.5.1]{rockafellar}.
	 Therefore $e\in \relint(K)$. From the definition of $\sparse_K$, 
	 there exists some $S\subseteq [m]$ with $|S|\leq \sparse_K(\epsilon)$ and positive 
	 scalars $(\lambda_i)_{i\in S}$ such that
	 \[ (1-\epsilon)e \nsd_K \sum_{i\in S}\lambda_i x_i \nsd_K (1+\epsilon)e.\]
	 Since $e\in L$ and $\sum_{i\in S}\lambda_i x_i \in L$ it follows that 
	 \[ (1-\epsilon)e \nsd_C \sum_{i\in S}\lambda_i x_i \nsd_C (1+\epsilon)e,\]
	 and so that $\sparse_K(\epsilon)$ is a sparsification function for $C$. The inequality $\sparse_C(\epsilon)\leq \sparse_K(\epsilon)$ then follows from the definition of $\sparse_C$. 
 \end{proof}
 
Sparsification functions also do not increase under linear projections.
 \begin{lemma}
	 \label{lem:proj}
	 Let $K$ be a closed convex cone and let $\pi:\spn(K)\rightarrow \RR^k$
	 be a linear map. Then $\sparse_{\pi(K)}(\epsilon) \leq\sparse_K(\epsilon)$ for all $\epsilon\in (0,1)$. Moreover, if $\pi$ is a linear isomorphism then $\sparse_{\pi(K)}(\epsilon) = \sparse_K(\epsilon)$ for all $\epsilon\in (0,1)$. 
 \end{lemma}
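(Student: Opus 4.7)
My plan is to handle the two parts of the lemma separately. The equality when $\pi$ is a linear isomorphism follows from applying the main inequality in both directions (to $\pi$ and to $\pi^{-1}$, both linear maps), so the substantive content is the inequality $\sparse_{\pi(K)}(\epsilon) \le \sparse_K(\epsilon)$. To establish this, I would verify directly that $\sparse_K$ is itself a sparsification function for $\pi(K)$ in the sense of Definition~\ref{def:sp-fn}; monotonicity and upper semi-continuity are inherited automatically, so only the sparsification property needs attention.

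Fix $y_1,\ldots,y_m\in\pi(K)$ with $f:=\sum y_i \in \relint(\pi(K))$ and $\epsilon\in(0,1)$. The strategy is to lift the problem to $K$ via preimages, apply $\sparse_K$, and push the answer back down through $\pi$. Pick $\hat{x}_i\in K$ with $\pi(\hat{x}_i)=y_i$, and $e^*\in\relint(K)$ with $\pi(e^*)=f$ (which exists because $\relint(\pi(K))=\pi(\relint(K))$). The sum $\hat{e}=\sum\hat{x}_i$ projects to $f$ but need not lie in $\relint(K)$, blocking a direct application of $\sparse_K$. I would work around this by perturbing: for $s>0$, set $x_i^{(s)}:=\hat{x}_i+(s/m)e^*\in\relint(K)$, so that $\sum x_i^{(s)}=\hat{e}+se^*\in\relint(K)$ and $\pi(x_i^{(s)})=y_i+(s/m)f$. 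Applying $\sparse_K(\epsilon)$ produces a subset $S_s\subseteq[m]$ with $|S_s|\le\sparse_K(\epsilon)$ and positive weights $\lambda_i^{(s)}$, and projecting by $\pi$ yields, after rearrangement,
\[
(1-\epsilon)(1+s)f \;\nsd_{\pi(K)}\; \sum_{i\in S_s}\lambda_i^{(s)}y_i + \frac{s\Lambda_s}{m}f \;\nsd_{\pi(K)}\; (1+\epsilon)(1+s)f,
\]
where $\Lambda_s=\sum_{i\in S_s}\lambda_i^{(s)}$.

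The final step is to let $s\to 0^+$ so that the factor $1+s$ and the distorting $(s\Lambda_s/m)f$ term vanish. Since each $S_s$ is a subset of $[m]$ of bounded size, some fixed $S$ recurs along a sequence $s_n\downarrow 0$. I would then extract a convergent subsequence of the weights and take the limit; closedness of $K$ turns the limiting $K$-inequality into $(1-\epsilon)\hat{e}\nsd_K\sum_{i\in S}\lambda_i\hat{x}_i\nsd_K(1+\epsilon)\hat{e}$, and projecting by $\pi$ yields the required sparsifier of $f$, after pruning any indices with $\lambda_i=0$.

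The main obstacle I expect is establishing a uniform bound on $\Lambda_{s_n}$ as $s_n\downarrow 0$, which is necessary for the limit argument to produce finite weights. I plan to obtain such a bound by fixing $\phi\in\relint(K^*)$ and testing the upper $K$-inequality against $\phi$: this bounds $\Lambda_{s_n}$ provided $\phi(\hat{x}_i)>0$ for each $i$, which is automatic when $K$ is pointed once one removes any index $i$ with $y_i=0$ (such indices contribute nothing to the $\pi(K)$-sparsifier regardless). The general closed convex cone case reduces to the pointed case by quotienting out the lineality space $K\cap(-K)$. It is also worth noting that $\pi(K)$ need not be closed, but the sparsifier produced has entries directly in $\pi(K)$, so the conclusion is well-posed with the order relation $\nsd_{\pi(K)}$ exactly as defined in the paper.
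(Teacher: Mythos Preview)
Your approach is a valid alternative and, for pointed $K$, it works as you describe. It is genuinely different from the paper's argument, though both rely on a perturbation-plus-limit scheme. The paper does \emph{not} perturb the lifts $\hat{x}_i$; instead, if $\hat e=\sum_i\hat{x}_i$ lies on the boundary of $K$, it appends a single phantom point $x'_{m+1}=\delta\tilde e$ with $\tilde e\in\relint(K)$, $\pi(\tilde e)=f$, applies $\sparse_K(\epsilon/(1+2\delta))$ to the enlarged family, and at the end sends $\delta\to 0$ using the upper semi-continuity built into the definition of $\sparse_K$. The crucial trick is the choice of dual functional: since $\hat e$ is on the boundary, all the $\hat{x}_i$ lie in a common exposed face, so there exists $\xi\in K^*$ with $\xi(\hat{x}_i)=0$ for every $i$; testing the sparsifier against $\xi$ then pins down the phantom weight $\lambda_{m+1}$ exactly (between $1-\epsilon/(1+2\delta)$ and $1+\epsilon/(1+2\delta)$) with no need to bound any other weight. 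This lets the paper subtract off the phantom contribution explicitly rather than pass to a subsequential limit, and it works uniformly for all closed convex cones, pointed or not. Your route buys you the freedom to use $\sparse_K(\epsilon)$ directly (no shrinking of $\epsilon$ and no appeal to upper semi-continuity), at the cost of a compactness argument.

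There is a gap in your handling of the non-pointed case. Your bound on $\Lambda_{s_n}$ needs $\phi(\hat{x}_i)>0$ for each $i$, and you obtain this from pointedness after deleting indices with $y_i=0$. But ``quotienting out the lineality space $L=K\cap(-K)$'' does not, by itself, rescue the argument: the map $\pi$ need not factor through the quotient $\spn(K)/L$, so there is no direct way to transport the problem for $\pi(K)$ to a projection problem for the pointed cone $K/L$, and the equality $\sparse_K=\sparse_{K\cap L^\perp}$ proved later in the paper uses the very lemma you are proving. The concrete obstruction is that even after removing $y_i=0$, a lift $\hat{x}_i$ may lie in $L$, in which case every $\phi\in K^*$ vanishes on it and no bound on $\lambda_i^{(s)}$ follows. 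A correct patch is: first discard every index $i$ with $y_i$ in the lineality space of $\pi(K)$ (these can be dropped without changing the $\pi(K)$-ordering constraints, since translations by the lineality space leave $\nsd_{\pi(K)}$ invariant); for each remaining $y_j$ one checks that some lift $\hat{x}_j\in K\setminus L$ exists (otherwise $y_j\in\pi(L)$, contradiction), and then $\phi(\hat{x}_j)>0$ for $\phi\in\relint(K^*)$. With this refinement your argument goes through; without it the reduction is incomplete. The paper's supporting-functional device sidesteps this difficulty entirely.
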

\begin{proof}
	Suppose that $x_1,x_2,\ldots,x_m\in \pi(K)$ and $\sum_{i=1}^{m}x_i = e\in \relint(\pi(K))$ and $0<\epsilon<1$.
	Let $x_1',x_2',\ldots,x_m'\in K$ be such that $\pi(x_i') = x_i$ for $i\in [m]$. Let $e' = \sum_{i=1}^{m}x_i'$. 
	Since $\pi$ is linear, $\pi(e') = e$. Since $x_i'\in K$ for all $i$, it follows that $e'\in K$. 
	There are two cases to consider, depending on whether or not $e'\in \relint(K)$.

	If $e'\in \relint(K)$ then, by the definition of $\sparse_K$, there exists $S\subseteq[m]$ with $|S|\leq  \sparse_K(\epsilon)$
	and positive scalars $(\lambda_i)_{i\in S}$ such that $(1-\epsilon)e' \nsd_K \sum_{i\in S} \lambda_i x_i' \nsd_K (1+\epsilon)e'$. 
	By applying $\pi(\cdot)$ it follows that $(1-\epsilon)e \nsd_{\pi(K)} \sum_{i\in S}\lambda_i x_i \nsd_{\pi(K)} (1+\epsilon)e$, implying that $\sparse_K$ is a sparsification function for $\pi(K)$, as required.

	If $e'$ is in the relative boundary of $K$ then $x_1',x_2',\ldots,x_m'$ and $e'$ all lie in a proper face of $K$, and hence in a proper exposed face of $K$ (due to the existence of a supporting hyperplane to $K$ at $e'$). Therefore
	there is a linear functional $\xi\in K^*$ such that $\xi[x_i'] = 0$ for all $i$ and $\xi[e']=0$.
	Since $\relint(\pi(K)) = \pi(\relint(K))$~\cite[Theorem 6.6]{rockafellar}, there exists $\tilde{e}\in \relint(K)$ such 
	that $\pi(\tilde{e}) = \pi(e') =  e$. Let $\delta>0$, let $x_{m+1}' = \delta\tilde{e}$, and consider the collection of points $x_1',x_2',\ldots,x_m',x_{m+1}'\in K$.
	Note that $\sum_{i=1}^{m+1}x_i' = e'+\delta\tilde{e} =:e'' \in \relint(K)$ since $x_{m+1}' \in \relint(K)$ and $x_i'\in K$ for all $i\in [m]$.
	By the definition of $\sparse_K$ there exists $S\subseteq[m]$ with $|S|\leq \sparse_K\left(\frac{\epsilon}{1+2\delta}\right)$,
	positive scalars $(\lambda_i)_{i\in S}$, and a non-negative scalar $\lambda_{m+1}$, such that 
	\begin{equation}
		\label{eq:K-ineq-proj}
		\left(1-\frac{\epsilon}{1+2\delta}\right)e'' \nsd_K \lambda_{m+1}x_{m+1}' + \sum_{i\in S} \lambda_i x_i' \nsd_K \left(1+\frac{\epsilon}{1+2\delta}\right)e''.
	\end{equation}
	
	To complete the argument, we need to control the value of $\lambda_{m+1}$. Recall that there is a 
	linear functional $\xi\in K^*$ such that $\xi[x_i'] = 0$ for all $i$ and $\xi[e']=0$. Applying this to both sides of~\eqref{eq:K-ineq-proj} gives
	\begin{align*}
		\left(1-\frac{\epsilon}{1+2\delta}\right)\delta \xi[\tilde{e}] = \left(1-\frac{\epsilon}{1+2\delta}\right)\xi[e''] & \leq \lambda_{m+1}\xi[x_{m+1}'] = \delta \lambda_{m+1}\xi[\tilde{e}]\\
		&\leq \left(1+\frac{\epsilon}{1+2\delta}\right)\xi[e''] = 
	\left(1+\frac{\epsilon}{1+2\delta}\right)\delta \xi[\tilde{e}].
	\end{align*}
	Since $\tilde{e}\in \relint(K)$ it follows that $\xi[\tilde{e}]>0$. Therefore, $\left(1-\frac{\epsilon}{1+2\delta}\right)\leq \lambda_{m+1} \leq \left(1+\frac{\epsilon}{1+2\delta}\right)$. 

	Now we will consider the image of~\eqref{eq:K-ineq-proj} under $\pi$. Since $\pi(e'') = \pi(e'+\delta\tilde{e}) = (1+\delta)e$, we have
	\[ \left(1-\frac{\epsilon}{1+2\delta}\right)(1+\delta)e \nsd_{\pi(K)} \lambda_{m+1}\delta e +  \sum_{i\in S}\lambda_i x_i \nsd_{\pi(K)} \left(1+\frac{\epsilon}{1+2\delta}\right)(1+\delta)e.\]
	Using the fact that $\delta\left(1-\frac{\epsilon}{1+2\delta}\right)e \nsd_{\pi(K)} \delta\lambda_{m+1}e \nsd_{\pi(K)} \left(1+\frac{\epsilon}{1+2\delta}\right)\delta e$ gives
	\[ (1-\epsilon)e \nsd_{\pi(K)}\sum_{i\in S}\lambda_i x_i \nsd_{\pi(K)} (1+\epsilon)e.\]
	It follows that $\sparse_K(\epsilon/(1+2\delta))$ is a sparsification function for $\pi(K)$. Since 
	$\delta > 0$ was arbitrary, it follows that $\alpha(\epsilon) := \inf_{\delta>0}\sparse_K(\epsilon/(1+2\delta))$ 
	is a sparsification function for $\pi(K)$. Since sparsification
	functions are (by definition) upper semi-continuous and non-increasing, it follows that 
	$\alpha(\epsilon) = \sparse_K(\epsilon)$, and so $\sparse_K$ is a sparsification function for $\pi(K)$, as required. 

	In the case where $\pi$ is a linear isomorphism, we have that $C$ is the image of $K$ under a linear map and $K$ is also the image of 
	$C$ under a linear map. Therefore $\sparse_{K}(\epsilon) \leq \sparse_{C}(\epsilon) \leq \sparse_K(\epsilon)$ for all $\epsilon\in (0,1)$. 
\end{proof}
Since sparsification functions do not increase under (proper) intersections with a subspace and under projections, it follows that the sparsification functions do not increase under proper lifts (see Definition~\ref{def:lift}), allowing us to establish Proposition~\ref{prop:lifts}. 
\begin{proof}[{Proof of Proposition~\ref{prop:lifts}}]
	This follows directly from Lemmas~\ref{lem:int} and~\ref{lem:proj}.
\end{proof}

We defined the sparsification function for any closed convex cone. The following result shows that
we can, without loss of generality, consider proper cones.

\begin{lemma}
	Let $K$ be a closed convex cone and let $L = K \cap (-K)$ be the lineality space of $K$. 
	Then the sparsification functions of $K$ and the closed pointed cone 
	$K' = K\cap L^\perp$ are equal.
\end{lemma}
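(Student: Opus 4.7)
The plan is to exploit the internal direct sum decomposition $K = K' + L$ and establish the two inequalities $\sparse_{K'}(\epsilon)\le \sparse_K(\epsilon)$ and $\sparse_K(\epsilon)\le \sparse_{K'}(\epsilon)$ separately. The conceptual point I want to use throughout is that the order $\nsd_K$ is blind to the lineality component: since $L\subseteq K\cap(-K)$, we have $u\nsd_K v$ if and only if the $L^\perp$-components satisfy $u'\nsd_{K'} v'$.

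First I would record the necessary structural facts. Every $v\in \spn(K)=\spn(K')\oplus L$ decomposes uniquely as $v=v'+v_L$ with $v'\in \spn(K')\subseteq L^\perp$ and $v_L\in L$, and $v\in K$ if and only if $v'\in K'$ (uniqueness uses $K'\cap L=\{0\}$, which holds because $K'$ is pointed). Consequently $K\cap L^\perp = K'$ and $u\nsd_K v$ iff $u'\nsd_{K'} v'$. Using the Minkowski-sum identity $\relint(A+B)=\relint(A)+\relint(B)$ for convex sets together with $\relint(L)=L$, I also get $\relint(K)=\relint(K')+L$, and in particular $\relint(K')\subseteq \relint(K)$.

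For the direction $\sparse_{K'}(\epsilon)\le \sparse_K(\epsilon)$, I would take any instance $x_1',\dots,x_m'\in K'$ with $\sum_i x_i' = e'\in \relint(K')$. Since $K'\subseteq K$ and $\relint(K')\subseteq \relint(K)$, this is also a valid sparsification instance for $K$. Applying the definition of $\sparse_K$ yields $S\subseteq[m]$ of size at most $\sparse_K(\epsilon)$ and positive $(\lambda_i)_{i\in S}$ with $(1-\epsilon)e'\nsd_K \sum_{i\in S}\lambda_i x_i'\nsd_K (1+\epsilon)e'$. Because $e'$ and all $x_i'$ lie in $L^\perp$, both differences lie in $K\cap L^\perp = K'$, so the inequalities automatically upgrade to $\nsd_{K'}$, showing $\sparse_K$ is a sparsification function for $K'$.

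For the direction $\sparse_K(\epsilon)\le \sparse_{K'}(\epsilon)$, I would start from any instance $x_1,\dots,x_m\in K$ with $\sum_i x_i = e\in \relint(K)$, and decompose $x_i=x_i'+\ell_i$ and $e=e'+\ell_e$ with $x_i',e'\in K'$ and $\ell_i,\ell_e\in L$. Then $\sum_i x_i'=e'$ and, by the decomposition of $\relint(K)$, $e'\in \relint(K')$, so $\sparse_{K'}$ provides $S$ and $(\lambda_i)_{i\in S}$ with $(1-\epsilon)e'\nsd_{K'}\sum_{i\in S}\lambda_i x_i'\nsd_{K'}(1+\epsilon)e'$. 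The $K'$-component of $(1\pm\epsilon)e - \sum_{i\in S}\lambda_i x_i$ is precisely $(1\pm\epsilon)e' - \sum_{i\in S}\lambda_i x_i'$, so by the blindness of $\nsd_K$ to the $L$-component the same $S$ and $(\lambda_i)$ certify $(1-\epsilon)e\nsd_K\sum_{i\in S}\lambda_i x_i\nsd_K (1+\epsilon)e$.

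The only step that is not quite cosmetic is verifying $\relint(K)=\relint(K')+L$ and the order-compatibility statement $u\nsd_K v\Leftrightarrow u'\nsd_{K'} v'$; I expect these to follow quickly from standard Rockafellar identities together with the direct-sum structure of $K=K'+L$. Once these are in place, the rest is essentially bookkeeping across the decomposition.
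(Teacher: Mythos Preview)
Your proof is correct and follows essentially the same route as the paper: both arguments rest on the direct-sum decomposition $K=K'+L$ and the observation that $\nsd_K$ ignores the $L$-component, and both handle $\sparse_K\le\sparse_{K'}$ by projecting an instance onto $L^\perp$, sparsifying in $K'$, and lifting back. The only difference is in the direction $\sparse_{K'}\le\sparse_K$: the paper simply invokes the earlier projection lemma (since $K'=\pi(K)$ for $\pi$ the orthogonal projection onto $L^\perp$), whereas you argue directly by regarding a $K'$-instance as a $K$-instance and using $K\cap L^\perp=K'$ to upgrade the resulting $\nsd_K$ inequalities to $\nsd_{K'}$---this is effectively the intersection lemma applied with the subspace $L^\perp$, and is arguably the cleaner of the two routes here.
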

\begin{proof}
	If $L = \{0\}$ then $K = K'$ so there is nothing to prove. Otherwise, we have the 
	direct sum decomposition $K = K' + L$~\cite[Page 65]{rockafellar}. 
	Note that if $z\in K'$ and $v\in L$ then $(z,v)\in K$ if and only 
	if $z\in K'$ and $v\in L$. If, in addition $z'\in K'$ and $v'\in L$ 
	then $(z,v) \nsd_K (z',v')$ if and only if $(z-z',v-v')\in K'+ L$ which 
	holds if and only if $z\nsd_{K'} z'$.
	
	Since $K'$ is the image of $K$ under a linear map, 
	$\sparse_{K'}(\epsilon)\leq \sparse_K(\epsilon)$ for all $\epsilon\in (0,1)$. 
	For the reverse inequality, let $(z_1,v_1),\ldots,(z_m,v_m)\in K$ be such that 
	$\sum_{i\in [m]}(z_i,v_i) = (z_0,v_0)\in \relint(K)$. 
	Then, by the definition of $\sparse_{K'}(\epsilon)$ there exists $S\subseteq [m]$ with $|S|\leq \sparse_{K'}(\epsilon)$ and positive scalars $(\lambda_i)_{i\in S}$ such that 
	\[ (1-\epsilon)z_0 \nsd_{K'} \sum_{i\in S}\lambda_i z_i \nsd_{K'} (1+\epsilon)z_0.\]
	But this holds if and only if  
	\[ (1-\epsilon)(z_0,v_0) \nsd_{K} \sum_{i\in S}\lambda_i (z_i,v_i) \nsd_{K} (1+\epsilon)(z_0,v_0),\]
	showing that $\sparse_{K}(\epsilon)\leq \sparse_{K'}(\epsilon)$.
\end{proof}

\subsection{Faces}
\label{sec:faces}

This section considers how sparsification functions interact with the facial structure of a convex cone. It is reasonable to hope that sparsification functions might be monotone along faces, in the sense that if $F\subseteq K$ is a face of $K$ then $\sparse_F(\epsilon)\leq \sparse_K(\epsilon)$ for all $\epsilon\in (0,1)$. However we are only able to establish this result under additional 
technical assumptions on the face $F$. The technical assumptions we involve are related to certain strong notions of facial exposedness. The first of these is the notion of a face being projectionally exposed. This was first introduced by Borwein and Wolkowicz~\cite{borwein1981regularizing} in their study of facial reduction methods for abstract convex optimization problems.
\begin{definition}
	Let $K$ be a closed convex cone. A face $F$ of $K$ is \emph{projectionally exposed} if there is an idempotent linear map $\pi:\spn(K)\rightarrow \spn(K)$ such that 
	$\pi(K) = F$.
\end{definition}
 \begin{lemma}
	 \label{lem:mono-pexp}
	 If $K$ is a closed convex cone and $F$ is a projectionally exposed face of $K$ then $\sparse_F(\epsilon)\leq \sparse_K(\epsilon)$ for all $\epsilon\in (0,1)$. 
 \end{lemma}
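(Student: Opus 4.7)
The plan is to observe that Lemma~\ref{lem:mono-pexp} is an essentially immediate consequence of Lemma~\ref{lem:proj}. By the definition of a projectionally exposed face, there is an idempotent linear map $\pi:\spn(K)\to \spn(K)$ such that $F=\pi(K)$. Composing $\pi$ with a linear isomorphism $\spn(K)\cong \RR^k$ (where $k=\dim(\spn(K))$) gives a linear map from $\spn(K)$ to $\RR^k$ whose image is (linearly isomorphic to) $F$. Lemma~\ref{lem:proj} then gives $\sparse_{\pi(K)}(\epsilon)\leq \sparse_K(\epsilon)$, and since $\pi(K)=F$ (and sparsification functions are preserved under linear isomorphism by the second part of Lemma~\ref{lem:proj}), this yields $\sparse_F(\epsilon)\leq \sparse_K(\epsilon)$, which is exactly the claim.

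One minor point worth flagging in the writeup is that the order $\nsd_F$ used in the definition of $\sparse_F$ coincides with $\nsd_{\pi(K)}$, precisely because $F=\pi(K)$ as convex cones. Thus there is no mismatch between the partial order intrinsic to $F$ and the one arising from the image of $\pi$. Note also that idempotence of $\pi$ plays no role in the argument: what is used is only the representation $F=\pi(K)$ as the image of $K$ under a linear map. The idempotence is a feature of the definition of projectional exposedness, not an ingredient of this particular proof.

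The main potential obstacle — namely, handling the case where a preimage sum $e'\in K$ of $e\in \relint(F)$ lies on the relative boundary of $K$ rather than in $\relint(K)$ — has already been dealt with inside the proof of Lemma~\ref{lem:proj}, via the perturbation argument that adds a small multiple of a point $\tilde{e}\in \relint(K)$ and then takes a limit. Consequently the proof here reduces to a single application of Lemma~\ref{lem:proj} together with the identity $\pi(K)=F$, and no further technical work is required.
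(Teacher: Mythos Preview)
Your proposal is correct and matches the paper's own proof, which simply observes that $F=\pi(K)$ for a linear map $\pi$ and then invokes Lemma~\ref{lem:proj}. Your additional remarks about the linear isomorphism with $\RR^k$, the irrelevance of idempotence, and the matching of the orders $\nsd_F$ and $\nsd_{\pi(K)}$ are accurate clarifications that the paper leaves implicit.
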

 \begin{proof}
 	If $F$ is a projectionally exposed face of $K$ then there exists an idempotent 
	 linear map $\pi$ such that $\pi(K) = F$. The result then follows from Lemma~\ref{lem:proj}.
 \end{proof}
 Another, more recent, strengthening of facial exposedness is the notion of an amenable face, introduced by Louren\c{c}o~\cite{lourencco2021amenable} in the context of developing error bounds for conic optimization problems in the absence of constraint qualification. 
 \begin{definition}
	 Let $K$ be a closed convex cone. A face $F$ of $K$ is \emph{amenable} if
	 there is a positive constant $\kappa$ such that 
	 \[ d(x,F) \leq \kappa\,d(x,K)\;\;\textup{for all $x\in \spn(F)$}.\]
 \end{definition}

 The following result is strictly stronger that Lemma~\ref{lem:mono-pexp}, since it is known that 
 there are cones with faces that are amenable but not projectionally exposed~\cite{LRS2025}. 

 \begin{lemma}
	 \label{lem:mono-amenable}
	 If $K$ is a closed convex cone and $F$ is an amenable face of $K$ then 
	 $\sparse_F(\epsilon)\leq \sparse_K(\epsilon)$ for all $\epsilon\in (0,1)$. 
 \end{lemma}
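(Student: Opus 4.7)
The plan is to adapt the perturbation trick from the proof of Lemma~\ref{lem:proj}, combining it with the amenability condition to turn a $K$-order sparsifier into an $F$-order sparsifier, and then to take a limit as the perturbation size vanishes. Given $x_1,\ldots,x_m \in F$ with $\sum_{i=1}^m x_i = e \in \relint(F)$ and $\epsilon \in (0,1)$, I may assume without loss of generality that $F$ is a proper face of $K$. Fix $\tilde{e} \in \relint(K)$. Since $e$ lies in the relative boundary of $K$, there is a supporting functional $\xi \in K^*$ with $\xi[e] = 0$. A short relative-interior argument---for $u \in F$, the point $e + t(u - e) \in F$ for $t$ in a symmetric neighborhood of $0$ because $e \in \relint(F)$, and evaluating $\xi \geq 0$ on both sides forces $\xi[u] = 0$---shows that $\xi$ vanishes on all of $F$, hence on $\spn(F)$ by linearity. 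Moreover $\xi[\tilde{e}] > 0$, for if $\xi$ vanished at an interior point of $K$ it would vanish on all of $\spn(K)$.

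For each $\delta > 0$ and each $\epsilon' \in (0,1)$ (both to be chosen later), append $x_{m+1} := \delta \tilde{e}$ so that the augmented sum $e + \delta \tilde{e}$ lies in $\relint(K)$, and invoke $\sparse_K(\epsilon')$ to obtain $S \subseteq [m+1]$ with $|S| \leq \sparse_K(\epsilon')$ and positive weights $(\lambda_i)_{i \in S}$ satisfying
\[ (1-\epsilon')(e + \delta \tilde{e}) \nsd_K \sum_{i \in S} \lambda_i x_i \nsd_K (1+\epsilon')(e + \delta \tilde{e}). \]
Let $y := \sum_{i \in S \cap [m]} \lambda_i x_i \in F$ and $\mu := \lambda_{m+1}$ if $m+1 \in S$, else $\mu := 0$. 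Evaluating $\xi$ on both sides of this $K$-order inequality and using $\xi[e] = \xi[y] = 0$ yields $(1-\epsilon') \leq \mu \leq (1+\epsilon')$ in one step.

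Amenability enters through the following bridging step. Rearranging the upper $K$-order inequality yields $(1+\epsilon')e - y + ((1+\epsilon')-\mu)\delta \tilde{e} \in K$, so $d((1+\epsilon')e - y, K) \leq 2\epsilon' \delta \|\tilde{e}\|$; since $(1+\epsilon')e - y \in \spn(F)$, amenability then gives $d((1+\epsilon')e - y, F) \leq 2\kappa \epsilon' \delta \|\tilde{e}\|$. Because $e \in \relint(F)$, there is $r > 0$ with $e + v \in F$ whenever $v \in \spn(F)$ has $\|v\| \leq r$; writing an approximating element of $F$ as $z' = (1+\epsilon')e - y - w$ with $\|w\| \leq 2\kappa \epsilon' \delta \|\tilde{e}\|$ and using the scaling identity $w + \eta e = \eta(e + w/\eta)$ shows that $(1+\epsilon'+\eta_+)e - y \in F$ for $\eta_+ := 2\kappa \epsilon' \delta \|\tilde{e}\|/r$, i.e., $y \nsd_F (1+\epsilon'+\eta_+)e$. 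The lower bound $(1-\epsilon'-\eta_-)e \nsd_F y$ with $\eta_- \leq 2\kappa \epsilon' \delta \|\tilde{e}\|/r$ follows symmetrically. Setting $C_\delta := 1 + 2\kappa \delta \|\tilde{e}\|/r$ and $\epsilon' := \epsilon/C_\delta$ produces an $F$-sparsifier of size at most $\sparse_K(\epsilon/C_\delta)$ with $F$-order error exactly $\epsilon$. Hence $\alpha_\delta(\epsilon) := \sparse_K(\epsilon/C_\delta)$ is a sparsification function for $F$ for every $\delta > 0$, so $\sparse_F(\epsilon) \leq \inf_{\delta > 0}\sparse_K(\epsilon/C_\delta)$. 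Since $\epsilon/C_\delta$ increases to $\epsilon$ from below as $\delta \to 0^+$, the upper semi-continuity and non-increasing nature of $\sparse_K$ force this infimum to equal $\sparse_K(\epsilon)$, as at the end of the proof of Lemma~\ref{lem:proj}. The main obstacle I anticipate is the coupling between the two halves of the argument: the functional $\xi$ is what keeps the $K$-order slack of the form $((1\pm\epsilon')-\mu)\delta\tilde{e}$ small (of order $\epsilon'\delta$), while amenability is precisely the tool that lets this $K$-order slack be absorbed into an $O(\epsilon' \delta)$ correction in $F$-order through the ball around $e$ in $\relint(F)$---neither step alone suffices.
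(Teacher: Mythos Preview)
Your argument is essentially correct and takes a genuinely different route from the paper. One small imprecision: the constant $C_\delta = 1 + 2\kappa\delta\|\tilde{e}\|/r$ depends on the input through $r$ (the radius of the relative-interior ball at $e$), so ``$\alpha_\delta(\epsilon) := \sparse_K(\epsilon/C_\delta)$ is a sparsification function for $F$'' is not quite well-posed as stated. The clean fix is to observe that for any fixed $\gamma>0$ and any input, you can choose $\delta$ small enough that $C_\delta \leq 1+\gamma$, so $\sparse_K(\epsilon/(1+\gamma))$ is a sparsification function for $F$; then take the infimum over $\gamma>0$ and invoke upper semicontinuity exactly as you do.

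The paper's proof is structurally quite different: it never uses the amenability inequality directly in an $\epsilon$-perturbation argument. Instead it intersects $K$ with the subspace $S = \spn(F\cup\{\tilde e\})$ to obtain $\hat K = K\cap S$, observes that $F$ is a codimension-one amenable face of $\hat K$, and then invokes an external structural result (Theorem~6.2 of Louren\c{c}o--Roshchina--Saunderson) that every codimension-one amenable face is projectionally exposed. This reduces the statement to the already-proved projectionally exposed case (Lemma~\ref{lem:mono-pexp}) combined with Lemma~\ref{lem:int}. Your approach is more self-contained and elementary, avoiding the cited structural theorem entirely by using amenability as a quantitative distance bound within the perturbation scheme of Lemma~\ref{lem:proj}; the paper's approach is shorter and more modular, but relies on machinery from the facial-exposedness literature that your argument does not need.
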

 \begin{proof}
	 Let $e\in \relint(K)$, let $S = \spn(F \cup\{e\})$  and consider $\hat{K} = K \cap S$.
	If $F = K$ then the result trivially holds. As such, from now on we assume that $F$ is a proper subset of $K$. Therefore $F$ is disjoint from the relative interior
	 of $K$, so $e\notin F$. It follows that $\dim(F) = \dim(S) - 1 = \dim(\hat{K})-1$ (where the last equality holds because $S$ meets the relative interior of $K$). Moreover, $F\cap S = F$ is a face of $\hat{K} = K \cap S$. 

	 By assumption, $F$ is an amenable face of $K$. As such, there exists $\kappa>0$ such that 
	 \[ d(x,F) \leq \kappa\, d(x,K)\;\;\textup{for all $x\in \spn(F)$}.\]
	 Since $\hat{K}\subseteq K$ it follows that $d(x,K) \leq d(x,\hat{K})$ for all $x$. 
	 Therefore,
	 \[ d(x,F) \leq \kappa d(x,K) \leq \kappa d(x,\hat{K})\;\;\textup{for all $x\in \spn(F)$}.\]
	 Hence $F$ is a codimension one amenable face of $\hat{K}$. From~\cite[Theorem 6.2]{lourenco2022amenable}, it follows that $F$ is a projectionally exposed face of $\hat{K}$. From Lemma~\ref{lem:mono-pexp} we see that $\sparse_{F}(\epsilon) \leq \sparse_{\hat{K}}(\epsilon)$. Finally, 
	 since $S$ meets the relative interior of $K$, it follows from Lemma~\ref{lem:int} 
	 that $\sparse_{\hat{K}}(\epsilon) \leq \sparse_K(\epsilon)$ for all $\epsilon\in (0,1)$. 

	 Overall, we have that $\sparse_F(\epsilon) \leq \sparse_K(\epsilon)$, as required.
 \end{proof}


\section{Sparsification and self-concordant barriers}
\label{sec:sc}

This section develops a very simple method for sparsficiation with respect to 
a general proper convex cone $K$. The basic idea is to find a point that is close to 
$e = \sum_{i\in [m]}x_i$ in the appropriate sense by running the Frank-Wolfe algorithm 
on an optimization problem with a quadratic objective and a constraint set with extreme 
points that are scaled versions of the input points $x_1,\ldots,x_m\in K$. Since each 
iteration of the Frank-Wolfe algorithm steps in the direction of an extreme point, the number
of iterations bound the number of terms in the decomposition of 
the current iterate. We use, as the objective, the squared Hessian norm of a $\nu$-logarithmically homogeneous self-concordant
barrier for $K$. Such an objective is related (by part (iv) of Lemma~\ref{lem:lhsc-prop}) 
to the spectral norm $|\cdot|_e$ with respect to $K$, the quantity
we truly would like to control.
By taking the extreme points of the constraint set to be scaled in a way that is 
also informed by the self-concordant barrier, we obtain a bound on the curvature constant 
of the objective that depends only on $\nu$. Combining these ingredients
gives a bound (Theorem~\ref{thm:sc-sparse}) on the sparsification function that depends 
only on $\nu$ and $\epsilon$.

Theorem~\ref{thm:sc-sparse} is not strong enough to 
give a non-trivial sparsification result for the positive semidefinite cone. However, it does
give interesting examples of convex cones for which dimension-independent sparsification is possible. We discuss this further in Section~\ref{sec:sc-egs}.

\subsection{The Frank-Wolfe algorithm}
\label{sec:fw}

The approach we will use to construct a sparsifier with respect to a general convex cone will be based on an algorithm for minimizing 
smooth convex functions over compact convex sets, known as the \emph{Frank-Wolfe} (or \emph{conditional gradient}) algorithm~\cite{fw}.
Let $f$ be a differentiable convex function, and let $\mathcal{X}$ be a compact convex subset of the domain of $f$.
Consider the optimization problem
\[ f^\star := \min_{z\in \mathcal{X}} f(z).\]
The Frank-Wolfe algorithm, given in its most basic form in Algorithm~\ref{alg:fw}, is a method to solve this problem. 

\begin{algorithm}
	\caption{Frank-Wolfe algorithm}
    \begin{algorithmic}
	    \Input{Differentiable convex function $f$, compact convex set $\mathcal{X}\subseteq \textup{dom}(f)$}
	    \State \textbf{Initialize:} Initial point $z_0\in \mathcal{X}$
		\For{$t=0,1,\ldots$}{}
		\State $\alpha_t = \frac{2}{t+2}$
		\State $v_t \in \arg\min_{v\in \mathcal{X}} \langle \nabla f(z_t),v\rangle$
		\State $z_{t+1} = (1-\alpha_t)z_{t} + \alpha_tv_t$
        \EndFor
    \end{algorithmic}
    \label{alg:fw}
\end{algorithm}

Each step of the algorithm involves minimizing a linear functional over $\mathcal{X}$ to obtain
a direction $v_t$. The next iterate is then a convex combination of the current iterate and 
the direction $v_t$. Since linear optimization over a compact convex set always has an optimal 
point that is an extreme point of the set, we can always choose $v_t$ to be an extreme point
of $\mathcal{X}$. If we do this, the Frank-Wolfe algorithm (as stated in Algorithm~\ref{alg:fw})
has the property that the $t$th iterate, $z_t$, is a convex combination of at most $t$ 
extreme points of $\mathcal{X}$. 

The main parameter of the objective function that appears in typical convergence results for the Frank-Wolfe algorithm (see Theorem~\ref{thm:fw-conv}, to follow)
is the \emph{curvature constant} of $f$ with respect to $\mathcal{X}$, defined as 
\[ C_f := \sup_{\substack{z,s\in \mathcal{X},\\\gamma\in [0,1]}} \frac{2}{\gamma^2}B_f((1-\gamma)z+\gamma s||z)\]
where 
$B_f(x||y) := f(x) - f(y) - \langle \nabla f(y),x-y\rangle$
is the \emph{Bregman divergence} associated with $f$. 

\begin{theorem}[{\cite[Theorem 1]{jaggi2013revisiting}}]
	\label{thm:fw-conv}
	Let $f$ be a differentiable convex function with curvature constant $C_f$ and let $\mathcal{X}$ be a compact convex set contained in the domain of $f$.
	Then the iterates $z_t$ of Algorithm~\ref{alg:fw} applied to $f$ and $\mathcal{X}$ satisfy
	\[ f(z_t) - f^\star \leq \frac{2 C_f}{t+2}\quad\textup{for all $t\geq 1$}.\]
\end{theorem}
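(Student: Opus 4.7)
The plan is to derive a one-step recursion for the optimality gap $h_t := f(z_t) - f^\star$ and then close the induction using the schedule $\alpha_t = 2/(t+2)$. First, I would invoke the definition of $C_f$ with $z = z_t$, $s = v_t$, and $\gamma = \alpha_t$; since $(1-\alpha_t) z_t + \alpha_t v_t = z_{t+1}$, this gives $B_f(z_{t+1} \| z_t) \leq (\alpha_t^2/2)\, C_f$. Expanding the Bregman divergence then yields
\[ f(z_{t+1}) \leq f(z_t) + \alpha_t \langle \nabla f(z_t), v_t - z_t\rangle + \frac{\alpha_t^2}{2} C_f. \]

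Next I would bound the inner product term by combining two standard inequalities. Since $v_t$ minimizes $\langle \nabla f(z_t), \cdot\rangle$ over $\mathcal{X}$ and $z^\star \in \mathcal{X}$, we have $\langle \nabla f(z_t), v_t\rangle \leq \langle \nabla f(z_t), z^\star\rangle$. Convexity of $f$ gives $\langle \nabla f(z_t), z^\star - z_t\rangle \leq f^\star - f(z_t) = -h_t$, so $\langle \nabla f(z_t), v_t - z_t\rangle \leq -h_t$. Substituting into the previous display and subtracting $f^\star$ from both sides produces the key recursion
\[ h_{t+1} \leq (1-\alpha_t)\, h_t + \frac{\alpha_t^2}{2} C_f. \]

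Finally, the bound $h_t \leq 2C_f/(t+2)$ follows by induction on $t$. The base case $t = 1$ uses $\alpha_0 = 1$, which annihilates the $h_0$ term in the recursion and gives $h_1 \leq C_f/2 \leq 2C_f/3$. For the inductive step, the recursion together with the hypothesis yields
\[ h_{t+1} \leq \frac{t}{t+2}\cdot\frac{2C_f}{t+2} + \frac{2 C_f}{(t+2)^2} = \frac{2 C_f\,(t+1)}{(t+2)^2}, \]
and the elementary inequality $(t+1)(t+3) \leq (t+2)^2$ upgrades this to $2 C_f/(t+3)$, closing the induction. There is no real obstacle here, since this is the textbook analysis of Frank-Wolfe; the only care required is to match $\gamma = \alpha_t$ when invoking the definition of $C_f$, so that the curvature term enters with the right $\alpha_t^2/2$ factor and the induction constants align with the $2/(t+2)$ schedule.
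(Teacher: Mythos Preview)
The paper does not prove this theorem at all; it is simply quoted as an external result from~\cite{jaggi2013revisiting}. Your argument is correct and is precisely the standard analysis from that reference: derive the one-step recursion $h_{t+1}\le (1-\alpha_t)h_t + \tfrac{\alpha_t^2}{2}C_f$ from the curvature bound and the linear minimization step, then close by induction using the step-size schedule $\alpha_t = 2/(t+2)$.
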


\subsection{A bound on the sparsification function}
In this section we establish a general bound on the sparsification function of a convex cone in terms of the barrier parameter of a logarithmically homogeneous self-concordant barrier for the cone.

\begin{theorem}
	\label{thm:sc-sparse}
	Let $K\subseteq \RR^n$ be a closed, pointed, full-dimensional convex cone. Let $\phi$ be a logarithmically homogeneous self-concordant barrier for $K$ with parameter $\nu$. Then $\sparse_K(\epsilon)\leq\lceil (4\nu/\epsilon)^2\rceil$.
\end{theorem}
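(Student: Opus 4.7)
The plan is to produce a sparsifier by approximately minimizing the quadratic function
\[
f(z) := D^2\phi(e)[z - e, z - e] = \|z - e\|_e^2
\]
over the convex hull $\cX := \conv\{\tilde x_1, \ldots, \tilde x_m\}$ of suitably rescaled input points $\tilde x_i$, using Frank--Wolfe (Algorithm~\ref{alg:fw}). By Lemma~\ref{lem:lhsc-prop}(iv), any $z$ with $f(z) \leq \epsilon^2$ automatically satisfies $|z - e|_e \leq \|z-e\|_e \leq \epsilon$, which is equivalent to $(1-\epsilon)e \nsd_K z \nsd_K (1+\epsilon)e$; this is the bridge between the Euclidean-type control delivered by the algorithm and the order-theoretic error required by $\sparse_K$. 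Because each Frank--Wolfe iterate incorporates at most one new generator $v_t \in \{\tilde x_1, \ldots, \tilde x_m\}$ (we may always take the linear minimizer to be a generator), the iterate $z_t$ is a convex combination of at most $t+1$ of the $\tilde x_i$, which will bound the sparsifier size.

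\textbf{Setting up the constraint set.} After discarding any $x_i = 0$ (which contribute nothing to the sum), set $c_i := -D\phi(e)[x_i]$ and $\tilde x_i := (\nu/c_i)\, x_i$. Parts (i) and (ii) of Lemma~\ref{lem:lhsc-prop} give
\[
\sum_{i=1}^m c_i = -D\phi(e)[e] = \nu \quad\text{and}\quad c_i \geq \|x_i\|_e > 0,
\]
the positivity because $\|\cdot\|_e$ is a norm on the pointed cone $K$. Two consequences follow. First, $e = \sum_i (c_i/\nu)\,\tilde x_i$ is a convex combination, so $e \in \cX$ and $f^\star := \min_\cX f = 0$. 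Second, $\|\tilde x_i\|_e = (\nu/c_i)\|x_i\|_e \leq \nu$, so $\cX$ has $\|\cdot\|_e$-diameter at most $2\nu$---a bound depending only on the barrier parameter $\nu$, not on the dimension of $K$.

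\textbf{Convergence analysis.} Since $f$ is quadratic with Hessian (as a bilinear form) equal to $2D^2\phi(e)$, an expansion of $f(x)-f(y)-\langle \nabla f(y), x-y\rangle$ yields the identity $B_f(x \| y) = \|x - y\|_e^2$. Consequently the curvature constant satisfies
\[
C_f = 2\sup_{s,z \in \cX}\|s - z\|_e^2 \leq 2(2\nu)^2 = 8\nu^2.
\]
Initializing $z_0 = \tilde x_1$ and applying Theorem~\ref{thm:fw-conv} yields $\|z_t - e\|_e^2 \leq 16\nu^2/(t+2)$ for $t \geq 1$. Choosing $t = \lceil (4\nu/\epsilon)^2 \rceil - 1$ (which exceeds $1$ for $\epsilon \in (0,1)$ and $\nu \geq 1$) gives $\|z_t - e\|_e < \epsilon$ while expressing $z_t$ as a convex combination of at most $\lceil (4\nu/\epsilon)^2\rceil$ of the $\tilde x_i$. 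Undoing the rescaling, $z_t = \sum_{i \in S}\mu_i \tilde x_i = \sum_{i \in S}(\mu_i \nu/c_i)\,x_i$, a conic combination with positive weights $\lambda_i := \mu_i\nu/c_i$, producing the required sparsifier.

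\textbf{Main obstacle.} The key conceptual step is choosing the rescaling $\tilde x_i = (\nu/c_i)\, x_i$ so that $\cX$ contains $e$ as a convex combination and has $\|\cdot\|_e$-diameter bounded by $2\nu$ \emph{independently} of the dimension of $K$: this is precisely where the logarithmic homogeneity and self-concordance of $\phi$ enter, through the inequality $\|u\|_e \leq -D\phi(e)[u]$ of Lemma~\ref{lem:lhsc-prop}(ii) and the identity $-D\phi(e)[e] = \nu$ of (i). The remaining bridge from the Hessian norm to the order-induced norm comes from Lemma~\ref{lem:lhsc-prop}(iv), which is essential because the sparsification condition in Definition~\ref{def:sp-constant} is order-theoretic rather than Euclidean; the rest of the argument is a routine Frank--Wolfe analysis, where the explicit identity $B_f(x\|y) = \|x-y\|_e^2$ for quadratic $f$ makes bounding $C_f$ immediate.
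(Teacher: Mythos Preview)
Your proof is correct and follows essentially the same approach as the paper: rescale the $x_i$ by $-D\phi(e)[x_i]$ so that $e$ lies in the convex hull $\cX$ of the rescaled points, observe that the $\|\cdot\|_e$-diameter of $\cX$ is bounded by $2\nu$ via Lemma~\ref{lem:lhsc-prop}(ii), run Frank--Wolfe on the quadratic $\|z-e\|_e^2$, and translate the Hessian-norm bound into the order-theoretic condition via Lemma~\ref{lem:lhsc-prop}(iv). The only cosmetic differences are that the paper takes $f(z)=\tfrac12\|z-e\|_e^2$ and bounds $\|z\|_e$ uniformly over $\cX$ using the linear functional $-D\phi(e)[\cdot]\equiv\nu$ on $\cX$, and it uses the fact that $\alpha_0=1$ in Algorithm~\ref{alg:fw} to count $T$ rather than $T+1$ generators after $T$ steps; your accounting compensates for this by taking one fewer iteration.
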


\begin{proof}
First, we will find scalars $\mu_i>0$, and define rescaled points 
$\tilde{x}_i := \mu_i x_i$ for $i=1,2,\ldots,m$, such that $e$ is a convex combination of the $\tilde{x}_i$. 
Given such scalars, if there exists a set $S\subset [m]$ and positive scalars $\tilde{\lambda_i}$ for $i\in S$ such that 
	$(1-\epsilon)e \nsd_K \sum_{i\in S}\tilde{\lambda}_i\tilde{x}_i \nsd_K (1+\epsilon)e$, then 
	Theorem~\ref{thm:sc-sparse} follows by taking $\lambda_i = \tilde{\lambda}_i\mu_i$ for $i\in S$.
	As such, it is enough to 
	work with the $\tilde{x}_i$. 

To find the scalars $\mu_i$, we note that since $\sum_{i=1}^{m}x_i = e$, by property (i) of Lemma~\ref{lem:lhsc-prop} we have that
	\[ -\sum_{i=1}^{m}D\phi(e)[x_i] = -D\phi(e)[e] = \nu.\]
	Let $w_i = -D\phi(e)[x_i]/\nu$ for $i=1,2,\ldots,m$.
	Since $x_i\in K$ for $i=1,2,\ldots,m$, it follows from property (ii) of Lemma~\ref{lem:lhsc-prop} that $w_i\geq\|x_i\|_e/\nu > 0$ for $i=1,2,\ldots,m$ (since $K$ is pointed so $\|\cdot\|_e$ is non-degenerate). 
	Furthermore, $\sum_{i=1}^{m} w_i = 1$. As such, let $\tilde{x}_i = w_i^{-1}x_i$. Then $\tilde{x_i}\in K$ for $i=1,2,\ldots,m$ and $\sum_{i=1}^{m}w_i \tilde{x}_i = \sum_{i=1}^{m}x_i = e$, 
so $e$ is a convex combination of the $\tilde{x}_i$. Therefore we can take $\mu_i = w_i^{-1}$ for $i\in [m]$.

Let $\mathcal{X}:= \textup{conv}\{\tilde{x}_1,\tilde{x}_2,\ldots,\tilde{x}_m\}$ and let $f(z) = \frac{1}{2}\|z-e\|_e^2$. Consider the optimization problem
\[ \min_{z\in \mathcal{X}} f(z).\]
Since $e\in \mathcal{X}$, it follows that the optimal value $f^\star$ of this problem is zero. 
Moreover, since 
	\[ -D\phi(e)[\tilde{x}_i] = \nu\quad\textup{for all $i$,}\]
	and $-D\phi(x)[\cdot]$ is a linear functional, 
	it follows that $-D\phi(e)[z] = \nu$ for all $z\in \mathcal{X}$. 

We next compute the curvature constant of $f$ with respect to $\mathcal{X}$.
	A straightforward computation shows that 
	\[ B_f(x||y) = f(x) - f(y) - \langle \nabla f(y),x-y\rangle = \frac{1}{2}\|x-y\|_e^2.\]
Therefore, the curvature constant can be bounded as
\begin{align*}
C_f & = \sup_{\substack{z,s\in \mathcal{X},\\\gamma\in [0,1]}} \frac{2}{\gamma^2}B_f((1-\gamma)z+\gamma s||z)\\
& = \sup_{\substack{z,s\in \mathcal{X},\\\gamma\in [0,1]}} \frac{1}{\gamma^2}\|\gamma (s-z)\|_e^2\\
& = \sup_{\substack{z,s\in \mathcal{X},\\\gamma\in [0,1]}} \|s-z\|_e^2\\
& \leq \left(2\sup_{z\in \mathcal{X}} \|z\|_e\right)^2\quad\textup{(by the triangle inequality)}\\
	& \leq 4 \sup_{z\in \mathcal{X}}(-D\phi(e)[z])^2\\
& = 4\nu^2
\end{align*}
	where the last equality uses the fact that $-D\phi(e)[z] = \nu$ for all $z\in \mathcal{X}$. 

After $T = \lceil \frac{16 \nu^2}{\epsilon^2}\rceil$ steps of the Frank-Wolfe algorithm, we obtain a point $z_T$ that is a convex combination of at most $T$ elements of $\{\tilde{x}_1,\ldots,\tilde{x}_m\}$. 
Moreover, 
\[ f(z_T) - f^\star = f(z_T) \leq \frac{2 C_f}{T+2} \leq \frac{8 \nu^2}{T} \leq \epsilon^2/2.\]
	Therefore, $\|z_T-e\|_e \leq \epsilon$. From Lemma~\ref{lem:lhsc-prop} part (iv) we have that $|z_T-e|_e \leq \|z_T-e\|_e\leq \epsilon$ and so 
\[ (1-\epsilon)e \nsd_K z_T \nsd_K (1+\epsilon)e,\]
as required.
\end{proof}

\section{Generalizing Batson-Spielman-Srivastava sparsification}
\label{sec:hyp-sparse}

This section shows that if a closed, pointed, full-dimensional convex cone admits a $\nu$-self-concordant barrier with certain additional properties (see Definition~\ref{def:ssc}), then the sparsification function is bounded by an expression that is \emph{linear} in the barrier parameter $\nu$. 
This is an exact generalization of the linear-sized spectral sparsification result of Batson, Spielman, and Srivastava (Theorem~\ref{thm:bss}). 
\begin{theorem}
	\label{thm:main-sc-gen}
	Let $K$ be a closed, pointed, full-dimensional, convex cone. 
	If $K$ has a $\nu$-logarithmically homogeneous pairwise-self-concordant barrier 
	then $\sparse_K(\epsilon) \leq \lceil 4\nu/\epsilon^2\rceil $. 
\end{theorem}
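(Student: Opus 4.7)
The plan is to generalize the barrier potential method of Batson--Spielman--Srivastava by replacing their two rational potentials with directional derivatives of the abstract pairwise-self-concordant barrier $\phi$ evaluated at shifted points. Let $x_1,\ldots,x_m\in K$ sum to $e\in \interior{K}$ and fix $\epsilon\in (0,1)$. Exactly as in the proof of Theorem~\ref{thm:sc-sparse}, I first rescale each input to $\tilde x_i:=-\nu\, x_i/D\phi(e)[x_i]$ so that $-D\phi(e)[\tilde x_i]=\nu$ for all $i$ and $e$ is a convex combination of the $\tilde x_i$; any $\epsilon$-sparsifier of the $\tilde x_i$ summing to $e$ rescales back to one for the $x_i$, so I work with the normalized points.

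Next, for scalars $l<u$ and $A$ with $le\nd_K A\nd_K ue$, define the scalar upper and lower potentials
\[
\Phi^u(u,A):=-D\phi(ue-A)[e], \qquad \Phi^l(l,A):=-D\phi(A-le)[e].
\]
By logarithmic homogeneity these specialize to the BSS potentials $\sum_i 1/(u-\lambda_i)$ and $\sum_i 1/(\lambda_i-l)$ when $K=\cS^d_+$, $\phi=-\log\det$. Starting from $A_0=0$ and shifts $l_0<0<u_0$ chosen so that $\Phi^u(u_0,0)$ and $\Phi^l(l_0,0)$ sit at prescribed initial values, the iteration picks at step $k$ an index $i$ and a weight $t>0$, and updates $A_{k+1}=A_k+t\tilde x_i$, $u_{k+1}=u_k+\delta_u$, $l_{k+1}=l_k+\delta_l$, where $\delta_u>\delta_l$ are constants depending only on $\epsilon$, chosen so that the gap $\delta_u-\delta_l$ is of order $\epsilon^2/\nu$. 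The invariant maintained is that neither $\Phi^u(u_k,A_k)$ nor $\Phi^l(l_k,A_k)$ ever exceeds its starting value.

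Feasibility of each step reduces, via Taylor expansion of $\phi$ in the directions $e$ and $\tilde x_i$ controlled by standard self-concordance, to two scalar inequalities in $(i,t)$: an ``upper-barrier safe'' and a ``lower-barrier safe'' condition. Summing each inequality over $i$ collapses the first-order terms because $\sum_i\tilde x_i$ is a scalar multiple of $e$ and $D\phi(\cdot)$ is linear; the pairwise-self-concordance hypothesis enters precisely to bound the second-order sums $\sum_i D^2\phi(z)[\tilde x_i,\tilde x_i]$ at the shifted points $z\in\{u_ke-A_k,\,A_k-l_ke\}$ by a linear expression in $\Phi^u,\Phi^l$ and $\nu$. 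This is the abstract analogue of the BSS identity $\sum_i x_i^\intercal Mx_i=\tr\bigl(M\sum_i x_ix_i^\intercal\bigr)$, which in BSS trivially handles the sum of curvature corrections arising from Sherman--Morrison updates. Averaging then shows that some $i$, with a suitable weight $t>0$, satisfies both safety conditions simultaneously, so the iteration may proceed. After $T=\lceil 4\nu/\epsilon^2\rceil$ steps, the multiset $S_T$ has $|S_T|\leq T$, the telescoped shifts give $u_T/l_T \leq (1+\epsilon)/(1-\epsilon)$, and rescaling $A_T$ by a single scalar in $[l_T,u_T]$ produces the required $\epsilon$-sparsifier.

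The main obstacle I anticipate is twofold: first, formulating the pairwise-self-concordance definition so that the averaging bound on $\sum_i D^2\phi(z)[\tilde x_i,\tilde x_i]$ holds uniformly at the shifted points $ue-A$ and $A-le$ (not just at $e$) with the sharp linear-in-$\nu$ constant needed to recover $4\nu/\epsilon^2$ rather than a larger multiple; second, checking that this definition is satisfied by hyperbolic barriers $-\log p$, so that Corollary~\ref{thm:hyp-sparse} follows. Once the pairwise-self-concordance bound is in hand, the rest of the proof -- computing the two telescoped potential changes across $T$ iterations, and extracting the final scalar rescaling -- proceeds by direct translation of the BSS endgame.
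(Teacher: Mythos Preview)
Your architecture is exactly the paper's: define the potentials $\Phi^{u}(A)=-D\phi(ue-A)[e]$ and $\Phi_{\ell}(A)=-D\phi(A-\ell e)[e]$, iterate with fixed shifts $\delta_u=1+\epsilon/2$, $\delta_\ell=1-\epsilon/2$ from $A_0=0$, $u_0=-\ell_0=2\nu/\epsilon$, and at each step find an index and weight so that neither shifted potential increases. The endgame (divide $A_T$ by $T$ and read off $(1-\epsilon)e\nsd_K A_T/T\nsd_K(1+\epsilon)e$) also matches. One minor point: the initial rescaling $\tilde x_i=-\nu x_i/D\phi(e)[x_i]$ is unnecessary here---the paper works directly with $\sum_i x_i=e$, and the averaging step only needs that the ``safe-step'' functionals are linear in the direction.

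The genuine gap is that you have misidentified where pairwise-self-concordance enters, and as stated your plan would not close. It is \emph{not} used to bound a sum of diagonal Hessian terms $\sum_i D^2\phi(z)[\tilde x_i,\tilde x_i]$; no such quantity appears in the argument. The upper- and lower-safety conditions reduce to linear functionals $U[\Delta]$ and $L[\Delta]$ (each a combination of $D^2\phi(z)[\Delta,e]$ and $D\phi(z)[\Delta]$), so summing over $i$ collapses to $U[e]$ and $L[e]$ by linearity alone; the comparison $L[e]\geq U[e]$ then follows from monotonicity of $t\mapsto -D\phi(te-x)[e]$ and the step-size inequality $\delta_u^{-1}+\epsilon_u\le\delta_\ell^{-1}-\epsilon_\ell$. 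Pairwise-self-concordance---the third-derivative bound $0\le -D^3\phi(x)[v,u,u]\le 2D^2\phi(x)[v,u]\,|u|_x$ for $u,v\in K$---is instead used one level earlier, to prove that $U[\Delta]\le 1$ and $L[\Delta]\ge 1$ really do imply the shifted potentials do not increase. Concretely, it yields a Hessian-control lemma of the form
\[
\frac{D^2\phi(x)[e,\Delta]}{(1+t|\Delta|_x)^2}\ \le\ D^2\phi(x+t\Delta)[e,\Delta]\ \le\ \frac{D^2\phi(x)[e,\Delta]}{(1-t|\Delta|_x)^2},
\]
which bounds the integral remainders $\int_0^1 D^2\phi(z\pm t\Delta)[e,\Delta]\,dt$ in the Taylor expansions of the potentials after the update; this is the abstract substitute for the Sherman--Morrison identity. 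Your proposal's ``curvature-sum'' picture would push you toward a Frank--Wolfe-type bound and the weaker $O(\nu^2/\epsilon^2)$ constant of Theorem~\ref{thm:sc-sparse}, not the linear-in-$\nu$ bound claimed here.
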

The additional `pairwise-self-concordance' property constrains the change of the barrier function 
along \emph{pairs} of directions in the cone $K$, whereas the usual notion of self-concordance 
is a constraint that relates the second and third derivatives along arbitrary lines.
\begin{definition}
	\label{def:ssc}
	Let $K$ be a closed, pointed, full-dimensional convex cone. 
	A logarithmically homogeneous self-concordant barrier $\phi$ for $K$ 
	is \emph{pairwise-self-concordant} if,
	for all $u,v\in K$ and all $x\in \interior{K}$, 
	\begin{equation}
0 \leq 	-D^3\phi(x)[v,u,u] \leq 2D^2\phi(x)[v,u]|u|_x.
	\end{equation}
\end{definition}
It seems that this additional property has not been considered before, so we discuss it in more 
detail in Section~\ref{sec:ssc}. In particular, we give a sufficient condition that implies pairwise-self-concordance (see Lemma~\ref{lem:mono-cond}).
The $d$-logarithmically homogeneous self-concordant 
barrier $-\log\det(\cdot)$ for the positive semidefinite cone satisfies this sufficient condition, and so is pairwise-self-concordant (see Lemma~\ref{lem:psd-mono} in Section~\ref{sec:ssc}).
Therefore Theorem~\ref{thm:main-sc-gen} exactly generalises Theorem~\ref{thm:bss}. 
Furthermore, hyperbolic barriers also satisfy the sufficient condition for being pairwise-self-concordant. This tells us that Theorem~\ref{thm:main-sc-gen} implies Theorem~\ref{thm:hyp-sparse}, 
our main sparsification result for hyperbolicity cones. 

The aim of the rest of this section is to establish Theorem~\ref{thm:main-sc-gen}, by appropriately generalizing the deterministic sparsification argument of Batson, Spielman, and Srivastava~\cite{batson2012twice}.

\subsection{Barrier functions and the generalized BSS algorithm}
\label{sec:bss-alg}
The sparsification method of Batson, Spielman, and Srivastava applies to the positive semidefinite cone and the problem of sparsifying a collection $(X_i)_{i\in [m]}$ of positive semidefinite matrices that sum to the identity, i.e.,  $\sum_{i\in [m]}X_i = I$. 

If $\ell\,I \nd X \nd u\,I$ then they define upper and lower barrier functions
\[ \Phi^u(X) = \textup{tr}\left[(uI-X)^{-1}\right]\;\;\textup{and}\;\;\Phi_\ell(X) = \textup{tr}\left[(X-\ell I)^{-1}\right].\]
Their algorithm uses these barrier functions as a guide to control 
the eigenvalues of the current iterate and to take steps in the direction of $X_i$ for some $i\in [m]$, in such a way that the eigenvalues of the new matrix become more uniform.

In the general setting, we note that $\textup{tr}(X^{-1}) = -D\log\det(X)[I]$. Therefore, 
a natural generalization of the upper and lower barrier functions is
to take
\begin{align}
	\Phi^{u,e}(x) &= -D\phi(ue-x)[e]\quad\textup{and}\quad\label{eq:ubarrier-def}\\
	\Phi_{\ell,e}(x) &= -D\phi(x-\ell e)[e].\label{eq:lbarrier-def}
\end{align}
This choice leads to what 
we call the generalized BSS algorithm (Algorithm~\ref{alg:gen-bss}), which is the exact analogue of the original BSS algorithm~\cite[Algorithm]{batson2012twice} in the abstract conic setting.
\begin{algorithm}
	\caption{Generalized BSS algorithm for a proper cone $K$ that admits a $\nu$-logarithmically homogeneous pairwise-self-concordant barrier $\phi$ with associated upper and lower barrier functions defined in~\eqref{eq:ubarrier-def} and~\eqref{eq:lbarrier-def}.}
    \begin{algorithmic}
	    \Input{points $x_1,x_2,\ldots,x_m\in K$ such that $\sum_{i\in [m]}x_i =:e \in \interior{K}$; $0<\epsilon<1$}
	    \Output{$\lambda \geq 0$  with at most $T$ non-zero entries satisfying  
	    $(1-\epsilon)e \nsd_K \sum_{i\in [m]}\lambda_i x_i \nsd_K (1+\epsilon)e$}.
	    \State \textbf{Initialize:} $z_0 = 0\in \spn(K)$; $y_0 = 0\in \RR^m$; $T = \lceil 4\nu/\epsilon^2\rceil$;\\\hspace{1cm} $\delta_u = 1+\epsilon/2$; $\delta_\ell = 1-\epsilon/2$, $u_0 = 2\nu/\epsilon$, $\ell_0 = -2\nu/\epsilon$.
		\For{$t=1,2,\ldots,T$}{}
		\State Set $u_{t} = u_{t-1} + \delta_u$ and $\ell_t = \ell_{t-1} + \delta_\ell$
		\State \If{there exists $j\in [m]$ and $\alpha > 0$ such that the following three properties hold
		\begin{itemize}
			\item[] (i) $\ell_t\,e \nd_K z_{t-1} + \alpha x_j \nd_{K} u_t e$
			\item[] (ii) $\Phi^{u_t,e}(z_{t-1}+\alpha x_j) \leq \Phi^{u_{t-1},e}(z_{t-1})$
			\item[] (iii) $\Phi_{\ell_t,e}(z_{t-1}+\alpha x_j) \leq \Phi_{\ell_{t-1},e}(z_{t-1})$
		\end{itemize}\hspace{0.5cm}}
		\State Set $z_t = z_{t-1} + \alpha x_j$ and $[y_t]_i = \begin{cases} [y_{t-1}]_i & \textup{if $i\neq j$}\\
		[y_{t-1}]_i + \alpha & \textup{if $i=j$}\end{cases}$
		\Else
		\State \Return fail
		\EndIf
	\EndFor
	    \State \Return $\lambda = \frac{1}{T}y_T$
    \end{algorithmic}
    \label{alg:gen-bss}
\end{algorithm}
The next result tells us that as long as it is possible to simultaneously satisfy the conditions required to take a step at each iteration, then the sparsification algorithm works. 
\begin{lemma}
	\label{lem:gen-bss-outok}
	If the generalized BSS algorithm (Algorithm~\ref{alg:gen-bss}) for a proper 
	cone $K$ terminates correctly (i.e., does not return `fail') for all inputs, 
	then $\sparse_K(\epsilon) \leq \lceil 4\nu/\epsilon^2\rceil$.
\end{lemma}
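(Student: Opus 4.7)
The argument is a straightforward bookkeeping verification since the hypothesis of the lemma absorbs all substantive work. Fix an admissible input $x_1,\ldots,x_m \in K$ with $\sum_i x_i = e$; because $K$ is proper, $\relint{(K)} = \interior{K}$, so $e \in \interior{K}$ and the algorithm is applicable. I would proceed in three short steps.

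First, a one-line induction on $t$ gives $z_t = \sum_{i \in [m]} [y_t]_i x_i$, because each iteration simultaneously updates $z_{t-1} \mapsto z_{t-1} + \alpha x_j$ and $[y_{t-1}]_j \mapsto [y_{t-1}]_j + \alpha$ for the same index $j$. Since $y_0 = 0$ and only one coordinate of $y$ is modified per iteration, the output $\lambda = y_T/T$ has at most $T = \lceil 4\nu/\epsilon^2\rceil$ positive entries.

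Second, condition (i) of the update rule preserves the invariant $\ell_t e \nd_K z_t \nd_K u_t e$, which holds initially because $\ell_0 = -2\nu/\epsilon < 0 < 2\nu/\epsilon = u_0$ and $e \in \interior{K}$. After $T$ steps, $u_T = 2\nu/\epsilon + T(1+\epsilon/2)$ and $\ell_T = -2\nu/\epsilon + T(1-\epsilon/2)$. Dividing the invariant $\ell_T e \nd_K z_T \nd_K u_T e$ by $T$, and using $T \geq 4\nu/\epsilon^2$ (which gives $2\nu/(\epsilon T) \leq \epsilon/2$), yields
\[ (1-\epsilon)e \nsd_K z_T/T = \textstyle\sum_i \lambda_i x_i \nsd_K (1+\epsilon)e, \]
which is exactly the required sparsifier inequality.

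Finally, since this holds for every admissible input and every $\epsilon \in (0,1)$, the function $\epsilon \mapsto \lceil 4\nu/\epsilon^2\rceil$ (or its upper semi-continuous envelope, to meet the regularity requirement of Definition~\ref{def:sp-constant}) is a valid sparsification function for $K$, giving $\sparse_K(\epsilon) \leq \lceil 4\nu/\epsilon^2\rceil$. The lemma itself offers no real obstacle: all substantive content is packaged into the hypothesis ``terminates correctly for all inputs''. Establishing that hypothesis---that conditions (ii) and (iii) on the barrier values can always be jointly satisfied alongside condition (i), via the pairwise-self-concordance of $\phi$---is the topic of the remainder of Section~\ref{sec:hyp-sparse}.
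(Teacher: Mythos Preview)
Your proof is correct and follows essentially the same approach as the paper: both verify by induction that $z_t = \sum_i [y_t]_i x_i$, note that $y_T$ has at most $T$ nonzero entries, use condition (i) to obtain $\ell_T e \nd_K z_T \nd_K u_T e$, and then divide by $T$ and use $T \geq 4\nu/\epsilon^2$ to bound $u_T/T \leq 1+\epsilon$ and $\ell_T/T \geq 1-\epsilon$. Your parenthetical remark about passing to the upper semi-continuous envelope is, if anything, slightly more careful than the paper, which simply asserts the conclusion without addressing that technicality.
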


\begin{proof}
	It is enough to show that, for an arbitrary input, if the 
	generalized BSS algorithm  does not return `fail' then  
	it returns $\lambda \geq 0$ with at 
	most $T$ non-zero entries that satisfies 
	$(1-\epsilon)e \nsd_K \sum_{i\in [m]}\lambda_i x_i \nsd_K (1+\epsilon)e$.

	Since at most one entry of $y$ becomes non-zero at each iteration, it follows 
	that $y_T$ (and hence $\lambda$) has at most $T$ non-zero entries. 
	It is straightforward to check (by induction on $t$) that the iterates of the algorithm have the property 
	\[ \sum_{i\in [m]} [y_t]_ix_i = z_t,\quad\textup{for all $t\in [T]$}.\]
	Therefore $\sum_{i\in [m]}\lambda_i x_i = \frac{1}{T}z_T$.
	By construction $\frac{1}{T}z_T$ satisfies $(\ell_T/T)e \nd_K \frac{1}{T}z_T \nd_K (u_T/T)e$. Furthermore, 
	\[ \frac{\ell_T}{T} = \frac{\ell_0 + T\delta_\ell}{T} = -\frac{2\nu/\epsilon}{\lceil4 \nu/\epsilon^2\rceil} + 1-\epsilon/2 \geq 1-\epsilon,\]
	where the inequality follows from $\lceil x \rceil \geq x$. 
	Similarly 
	\[ \frac{u_T}{T} = \frac{u_0 + T\delta_u}{T} = \frac{2\nu/\epsilon}{\lceil 4 \nu/\epsilon^2\rceil} + 1+\epsilon/2 \leq 1+\epsilon.\]
	It follows that 
	\[ (1-\epsilon)e \nsd_K (\ell_T/T)e \nd_K  \sum_{i\in [m]}\lambda_i x_i \nd_K (u_T/T)e \nsd_K (1+\epsilon)e.\]
\end{proof}
To establish Theorem~\ref{thm:main-sc-gen}, it remains to establish that the generalized BSS algorithm always terminates correctly.

The following result gives a sufficient condition relating the value of the barriers ($\epsilon_u$ and $\epsilon_\ell$) and the size of the barrier shifts ($\delta_u$ and $\delta_\ell$), such that it is possible to take a step, $\alpha x_j$ and ensure that the shifted barriers do not increase in value. 

\begin{proposition}
	\label{prop:bss-prog}
	Let $x_1,\ldots,x_m\in K\setminus\{0\}$ be such that $\sum_{i=1}^{m}x_i = e\in \interior{K}$. Let $\phi$ be a $\nu$-logarithmically homogeneous pairwise-self-concordant barrier for $K$ with 
	associated upper and lower barrier functions defined in~\eqref{eq:ubarrier-def} and~\eqref{eq:lbarrier-def}.
	Let $u,\ell,\delta_\ell,\delta_u,\epsilon_u,\epsilon_\ell > 0$. Let $\ell\,e \nd_K x \nd_K u\,e$ 
	and let $\Phi^{u,e}(x) \leq \epsilon_u$ and $\Phi_{\ell,e}(x) \leq \epsilon_\ell$.
	If $0<\delta_u^{-1} + \epsilon_u \leq \delta_\ell^{-1} - \epsilon_\ell$ then 
	there exists $j\in [m]$  and $\alpha>0$ such that
	\begin{itemize}
		\item[] (i) $(\ell+\delta_\ell) e\nd_K x+\alpha x_j \nd_K (u+\delta_u)e$
		\item[] (ii) $\Phi^{u+\delta_u,e}(x+\alpha x_j) \leq \Phi^{u,e}(x)$
		\item[] (iii) $\Phi_{\ell+\delta_\ell,e}(x+\alpha x_j) \leq \Phi_{\ell,e}(x)$
	\end{itemize}
\end{proposition}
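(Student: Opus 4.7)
The plan is to follow the potential-function strategy of Batson, Spielman, and Srivastava~\cite{batson2012twice}, using pairwise self-concordance as the analytic substitute for the Sherman--Morrison identity that underpins their matrix calculations. The argument has three ingredients: for each direction $x_j$, a closed-form upper bound on how much each shifted barrier can change after a step $\alpha x_j$; quantitative control on how the shifts of $u$ and $\ell$ by $\delta_u,\delta_\ell$ move the barriers, using the hypothesised bounds $\Phi^{u,e}(x)\leq \epsilon_u$ and $\Phi_{\ell,e}(x)\leq \epsilon_\ell$; and a summation/averaging argument over $j\in[m]$ that exploits $\sum_i x_i = e$.

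The key technical tool is a Gr\"onwall-type estimate. For $y\in\interior{K}$ and $x_j\in K$, consider $G(\alpha) := -D\phi(y-\alpha x_j)[e]$ for $\alpha\in[0,|x_j|_y^{-1})$. The definition of $|\cdot|_y$ yields the identity $|x_j|_{y-\alpha x_j} = |x_j|_y/(1-\alpha|x_j|_y)$. Combined with pairwise self-concordance (applied with $v=e$, $u=x_j$), this produces the separable differential inequality
\[ G''(\alpha) \;=\; -D^3\phi(y-\alpha x_j)[x_j,x_j,e] \;\leq\; \frac{2\,G'(\alpha)\,|x_j|_y}{1-\alpha|x_j|_y}, \qquad G'(\alpha)\geq 0, \]
(the non-negativity of $G'$ follows from the non-negativity half of pairwise self-concordance, which forces $D^2\phi(y)[e,x_j]\geq 0$), and this integrates in closed form to
\[ G(\alpha)-G(0) \;\leq\; \frac{\alpha\,D^2\phi(y)[x_j,e]}{1-\alpha|x_j|_y}. \]
Specialising to $y=y^+:=(u+\delta_u)e-x$ bounds the step-induced increase in the shifted upper barrier. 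A parallel argument for $H(\alpha) := -D\phi(y^-+\alpha x_j)[e]$ with $y^-:=x-(\ell+\delta_\ell)e$, using the dual identity $|x_j|_{y+\alpha x_j}=|x_j|_y/(1+\alpha|x_j|_y)$, gives a matching lower bound on the step-induced decrease in the shifted lower barrier. Applying the same template with $e$ in place of $x_j$ produces analogous bounds on the shift-induced decrease of the upper barrier and shift-induced increase of the lower barrier, where Lemma~\ref{lem:lhsc-prop}(ii) and (iv) together with the hypotheses translate $\Phi^{u,e}(x)\leq \epsilon_u$ and $\Phi_{\ell,e}(x)\leq \epsilon_\ell$ into useful control of $D^2\phi(\cdot)[e,e]$ and $|e|_\cdot$.

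Summing the per-direction step bounds over $j$ and invoking $\sum_i x_i = e$ (so that $\sum_j D^2\phi(y^\pm)[x_j,e]=D^2\phi(y^\pm)[e,e]$) reduces the simultaneous feasibility of conditions (ii) and (iii) to a single scalar comparison between the total ``cost'' (from step-induced changes) and the total ``budget'' (from shift-induced changes). Routine algebra shows that this comparison holds precisely when $\delta_u^{-1}+\epsilon_u\leq\delta_\ell^{-1}-\epsilon_\ell$, so by an averaging (pigeonhole) argument there exists $j\in[m]$ admitting some $\alpha>0$ for which (ii) and (iii) simultaneously hold; condition (i) is then automatic for sufficiently small $\alpha$ and is preserved up to the largest $\alpha$ satisfying (ii) and (iii).

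The principal obstacle is the Gr\"onwall step: one must verify that pairwise self-concordance (rather than bare self-concordance) supplies exactly the right ingredient to integrate the ODE in closed form and reproduce the BSS bound $\alpha p_j/(1-\alpha|x_j|_y)$ that matches Sherman--Morrison in the PSD case. A secondary subtlety is ensuring $y^-\in\interior{K}$: the hypothesis $\Phi_{\ell,e}(x)\leq\epsilon_\ell$, via Lemma~\ref{lem:lhsc-prop}(ii) and (iv), bounds $|e|_{x-\ell e}\leq\epsilon_\ell$, and the identity $|e|_{(x-\ell e)-\delta_\ell e}=|e|_{x-\ell e}/(1-\delta_\ell|e|_{x-\ell e})$ then keeps $y^-$ well inside $K$ so long as $\delta_\ell\epsilon_\ell<1$, a consistency condition implied by the main hypothesis.
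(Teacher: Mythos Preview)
Your plan is essentially the paper's own argument. The Gr\"onwall estimate you isolate is exactly what the paper packages as Lemma~\ref{lem:ssc-hessian} (Hessian control) followed by the integral-remainder computation inside Propositions~\ref{prop:uprog} and~\ref{prop:lprog}; the averaging step over $j$ using $\sum_j x_j = e$ is Proposition~\ref{prop:bss-prog2}, which defines linear functionals $U[\cdot]$ and $L[\cdot]$ and shows $L[e]\geq U[e]$ under the stated hypothesis before pigeonholing to a single $j$.

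One point deserves tightening. The phrase ``applying the same template with $e$ in place of $x_j$'' for the shift bounds is not quite right: your Gr\"onwall inequality integrates the \emph{upper} half $-D^3\phi\leq 2D^2\phi\,|u|$ of pairwise self-concordance and yields bounds of the wrong sign (or at the wrong base point) for the shift-induced changes. What actually delivers the shift bounds is the \emph{lower} half $-D^3\phi\geq 0$, i.e., convexity of $t\mapsto -D\phi(te-x)[e]$ and of $t\mapsto -D\phi(x-te)[e]$, which immediately gives the tangent-line inequalities
\[
\Phi^{u+\delta_u,e}(x)-\Phi^{u,e}(x)\leq -\delta_u D^2\phi(y^+)[e,e],\qquad \Phi_{\ell+\delta_\ell,e}(x)-\Phi_{\ell,e}(x)\leq \delta_\ell D^2\phi(y^-)[e,e],
\]
evaluated at the shifted points $y^\pm$ so they combine cleanly with your step bounds. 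This is how the paper obtains Lemmas~\ref{lem:eu-bound} and~\ref{lem:el-bound}, and it is what makes the comparison collapse \emph{exactly} to $\delta_u^{-1}+\epsilon_u\leq\delta_\ell^{-1}-\epsilon_\ell$ rather than to a weaker condition. Your handling of (i) is also slightly different from the paper's (which reads off $|\Delta|_{y^+}<1$ directly from $U[\Delta]\leq 1$), but your continuity argument is fine once $y^-\in\interior{K}$ is secured as you indicate.
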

The proof of this result, given in Section~\ref{sec:barrier-prog}, builds on a number of intermediate properties of pairwise-self-concordant barriers.  
With this result established, it is relatively straightforward to show that the generalized BSS 
algorithm always makes progress. 
\begin{proposition}
	\label{prop:correct-termination}
	If $K$ is a proper convex cone and $\phi$ is a $\nu$-logarithmically homogeneous pairwise-self-concordant barrier for $K$ then the generalized BSS algorithm (Algorithm~\ref{alg:gen-bss}) terminates correctly
	for any valid input.
\end{proposition}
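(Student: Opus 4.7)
My plan is to apply Proposition~\ref{prop:bss-prog} inductively at each iteration, so the key task is to verify that the sufficient condition $0 < \delta_u^{-1} + \epsilon_u \leq \delta_\ell^{-1} - \epsilon_\ell$ holds throughout, where $\epsilon_u$ and $\epsilon_\ell$ are upper bounds on the current values of the upper and lower barrier functions. Since, by construction, conditions (ii) and (iii) of Algorithm~\ref{alg:gen-bss} guarantee that the barrier values are non-increasing along the iterates, it suffices to bound them at $t=0$ and verify that the inequality holds once with $\epsilon_u = \Phi^{u_0,e}(z_0)$ and $\epsilon_\ell = \Phi_{\ell_0,e}(z_0)$.

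First I would compute the initial barrier values. Since $z_0 = 0$, $u_0 = 2\nu/\epsilon$, and $\ell_0 = -2\nu/\epsilon$, one has $u_0 e - z_0 = u_0 e$ and $z_0 - \ell_0 e = -\ell_0 e = u_0 e$, both lying in $\interior{K}$. Using logarithmic homogeneity and Lemma~\ref{lem:lhsc-prop}(i), differentiating $\phi(t e) = \phi(e) - \nu\log t$ in $t$ yields $D\phi(t e)[e] = -\nu/t$, so
\[ \Phi^{u_0,e}(0) = -D\phi(u_0 e)[e] = \frac{\nu}{u_0} = \frac{\epsilon}{2},\qquad \Phi_{\ell_0,e}(0) = -D\phi(-\ell_0 e)[e] = \frac{\nu}{-\ell_0} = \frac{\epsilon}{2}. \]
Next I would verify the inequality of Proposition~\ref{prop:bss-prog} with $\delta_u = 1+\epsilon/2$, $\delta_\ell = 1-\epsilon/2$, and $\epsilon_u = \epsilon_\ell = \epsilon/2$. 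A direct computation shows $\delta_\ell^{-1} - \delta_u^{-1} = \epsilon/(1-\epsilon^2/4) \geq \epsilon = \epsilon_u + \epsilon_\ell$ for $0<\epsilon<1$, and $\delta_u^{-1} + \epsilon_u > 0$ trivially.

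I would then argue by induction on $t$. At iteration $t$, the inductive hypothesis guarantees $\ell_{t-1}e\nd_K z_{t-1}\nd_K u_{t-1}e$ together with $\Phi^{u_{t-1},e}(z_{t-1}) \leq \epsilon/2$ and $\Phi_{\ell_{t-1},e}(z_{t-1}) \leq \epsilon/2$. The input points $x_1,\ldots,x_m \in K$ sum to $e \in \interior{K}$, so Proposition~\ref{prop:bss-prog} applied to $x = z_{t-1}$, $u = u_{t-1}$, $\ell = \ell_{t-1}$, and the shifts $\delta_u, \delta_\ell$, yields an index $j\in[m]$ and $\alpha > 0$ satisfying properties (i)--(iii) of Algorithm~\ref{alg:gen-bss}. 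Hence the algorithm does not return fail at step $t$, and sets $z_t = z_{t-1} + \alpha x_j$ satisfying $\ell_t e \nd_K z_t \nd_K u_t e$, $\Phi^{u_t,e}(z_t) \leq \Phi^{u_{t-1},e}(z_{t-1}) \leq \epsilon/2$, and analogously for the lower barrier. This re-establishes the inductive hypothesis at $t+1$.

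The main (already completed, but conceptually central) obstacle sits in Proposition~\ref{prop:bss-prog} itself; the present proof is essentially a bookkeeping argument verifying that the initial barrier values $\epsilon/2$ and the chosen shifts $1\pm\epsilon/2$ satisfy its hypothesis at every iteration. With the induction complete, the algorithm runs to completion and outputs $\lambda = y_T/T$, as needed.
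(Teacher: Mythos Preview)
Your proof is correct and follows essentially the same approach as the paper: compute the initial barrier values $\Phi^{u_0,e}(0)=\Phi_{\ell_0,e}(0)=\epsilon/2$ via logarithmic homogeneity, verify the hypothesis of Proposition~\ref{prop:bss-prog} for $\epsilon_u=\epsilon_\ell=\epsilon/2$ and $\delta_u,\delta_\ell=1\pm\epsilon/2$, and then induct using the non-increase of the barriers guaranteed by (ii) and (iii). Your verification of $\delta_\ell^{-1}-\delta_u^{-1}=\epsilon/(1-\epsilon^2/4)\geq \epsilon$ is in fact a bit cleaner than the paper's equivalent computation.
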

\begin{proof}
	We will to show that, at every iteration $t$, we can find $\alpha>0$ and $j\in [m]$ such that
	(i)--(iii) are satisfied. 
	We argue by strong induction on $t$. 

	For the base case, when $t=1$, we know that
	\[ \Phi^{u_0,e}(z_0) = -D\phi(u_0 e)[e] = u_0^{-1}\nu = \epsilon/2,\]
	where the second equality holds because $\phi$ is logarithmically homogeneous, so $D\phi$ is positively homogeneous of degree $-1$. Similarly
	\[ \Phi_{\ell_0,e}(z_0) = -D\phi(-\ell_0 e)[e] = (-\ell_0)^{-1}\nu = \epsilon/2.\]
	Furthermore, $\ell_0 e = -(2\nu/\epsilon)e \nd_K z_0 \nd_K (2\nu/\epsilon)e = u_0 e$. 

	We can apply Proposition~\ref{prop:bss-prog} with $x = z_0 = 0$, $u = u_0 = 2\nu/\epsilon$, $\ell = \ell_0 = -2\nu/\epsilon$, $\epsilon_u = \epsilon_\ell = \epsilon/2$, and $\delta_u = 1+\epsilon/2$ and $\delta_\ell = 1-\epsilon/2$. Note that 
	\begin{equation}
		\label{eq:delta-epsilon}
		0<\delta_u^{-1} + \epsilon_u = \frac{1 + (\epsilon/2) + (\epsilon/2)^2}{1+(\epsilon/2)} \leq \frac{1-(\epsilon/2)+(\epsilon/2)^2}{1-(\epsilon/2)} = \delta_\ell^{-1}-\epsilon_\ell
	\end{equation}
	where the inequality holds because $(1-t)(1+t+t^2) = 1-t^3\leq 1+t^3 =(1+t)(1-t+t^2)$ for all $t>0$.

	Proposition~\ref{prop:bss-prog} tells us that there exists $\alpha>0$ and $j\in [m]$ such that $\ell_1 e \nd_K z_0 + \alpha x_j \nd_K u_1 e$ and 
	$\Phi^{u_1,e}(z_0 + \alpha x_j) \leq \Phi^{u_0,e}(z_0)$ and 
	$\Phi_{\ell_1,e}(z_0+\alpha x_j) \leq \Phi_{\ell_0,e}(z_0)$. 
	So conditions (i)---(iii) are satisfied.

	Suppose that at every iteration $1\leq s\leq t-1$ we can find $\alpha>0$ and $j\in [m]$ 
	such that (i)---(iii) are satisfied. Then $\ell_{t-1}e \nd_K z_{t-1} \nd_K u_{t-1}e$
	and $\Phi^{u_{t-1},e}(z_{t-1}) \leq \Phi^{u_0,e}(z_0) = \epsilon/2$ and 
	 $\Phi_{\ell_{t-1},e}(z_{t-1}) \leq \Phi_{\ell_0,e}(z_0) = \epsilon/2$. 

	 We again apply Proposition~\ref{prop:bss-prog} with $x = z_{t-1}$, $u=u_{t-1}$, $\ell = \ell_{t-1}$, $\epsilon_u = \epsilon_\ell = \epsilon/2$, and $\delta_u = 1+\epsilon/2$ and $\delta_\ell = 1-\epsilon/2$. From~\eqref{eq:delta-epsilon} we have that $0< \delta_u^{-1}+\epsilon_u \leq \delta_{\ell}^{-1} - \epsilon_\ell$. Therefore, there exists $\alpha>0$ and $j\in [m]$ such that
	 $\ell_t\,e\nd_K z_{t-1} + \alpha x_j \nd_K u_t\,e$ and 
	 $\Phi^{u_t,e}(z_{t-1}+\alpha z_j) \leq \Phi^{u_{t-1},e}(z_{t-1}) \leq \epsilon/2$ and
	 $\Phi_{\ell_t,e}(z_{t-1}+\alpha x_j) \leq \Phi_{\ell_{t-1},e}(z_{t-1}) \leq \epsilon/2$. 
	 Therefore conditions (i)---(iii) are satisfied and we can 
	 conclude that the algorithm never returns `fail', as required.
\end{proof}
We note that Theorem~\ref{thm:main-sc-gen} follows directly from Lemma~\ref{lem:gen-bss-outok} and Proposition~\ref{prop:correct-termination}. Therefore, it remains to prove Proposition~\ref{prop:bss-prog}.

\subsection{Properties of pairwise-self-concordant barriers}
	In this section we summarize the key properties of the Hessian of pairwise-self-concordant 
	barriers that we will use in what follows.
	The first tells us about properties of the Hessian inner product of two points in the cone
	under the assumption that the barrier is pairwise-self-concordant.
\begin{lemma}
	\label{lem:nc-hess}
	Let $\phi:\interior{K} \rightarrow \RR$ be a pairwise-self-concordant barrier for a closed, convex, pointed, full-dimensional cone $K$.
	If $x\in \interior{K}$ and $u,v\in K$ then $D^2\phi(x)[u,v]\geq 0$. If, in addition, $u\in \interior{K}$ and $v\in K\setminus\{0\}$ then $D^2\phi(x)[u,v]>0$.
\end{lemma}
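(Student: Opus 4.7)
The plan has two parts, tracking the two statements of the lemma. For the first claim, I would appeal directly to the defining inequality of pairwise-self-concordance in Definition~\ref{def:ssc}. If $u=0$, then $D^2\phi(x)[v,u]=0$ by multilinearity. If $u\neq 0$, then since $K$ is pointed, $|\cdot|_x$ is a norm, so $|u|_x>0$; the chain $0 \leq -D^3\phi(x)[v,u,u] \leq 2 D^2\phi(x)[v,u]\,|u|_x$ then immediately yields $D^2\phi(x)[v,u] \geq 0$ after dividing by the positive quantity $2|u|_x$.

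For the second claim I would first establish strict positivity in the specific direction $u=x$. Differentiating the identity $D\phi(y)[y]=-\nu$ from Lemma~\ref{lem:lhsc-prop}(i) along an arbitrary direction $v$ gives $D^2\phi(x)[v,x] + D\phi(x)[v] = 0$, so by symmetry of the Hessian, $D^2\phi(x)[x,v] = -D\phi(x)[v]$. Since $v \in K$, Lemma~\ref{lem:lhsc-prop}(ii) yields $-D\phi(x)[v] \geq \|v\|_x$, which is strictly positive because $K$ is pointed and $v \neq 0$. Hence $D^2\phi(x)[x,v] > 0$.

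To extend this to arbitrary $u \in \interior{K}$, I would consider the linear functional $\ell(w) := D^2\phi(x)[w,v]$ on $\RR^n$. By the first part of the lemma, $\ell(w) \geq 0$ for all $w \in K$, so $\ell \in K^*$. By the previous paragraph, $\ell(x) > 0$, so $\ell$ is not identically zero. A standard separation argument then shows that any non-zero element of $K^*$ is strictly positive on $\interior{K}$: if $\ell(u_0)=0$ for some $u_0 \in \interior{K}$, then $u_0 \pm tw \in K$ for all sufficiently small $t>0$ and all $w \in \RR^n$, forcing $\pm t\,\ell(w) \geq 0$ and hence $\ell \equiv 0$, a contradiction. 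Applying this to $\ell$ gives $D^2\phi(x)[u,v] > 0$ for all $u \in \interior{K}$.

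I do not anticipate any significant obstacle here: each part reduces to a direct application of either the defining inequality of pairwise-self-concordance, the logarithmic homogeneity identities already collected in Lemma~\ref{lem:lhsc-prop}, or the elementary separation property of the dual cone together with full-dimensionality of $K$.
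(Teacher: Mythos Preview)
Your argument for the first assertion is essentially identical to the paper's: both reduce to the defining inequality of pairwise-self-concordance after disposing of the degenerate case $u=0$ (equivalently $|u|_x=0$).

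For the second assertion, your route is correct but genuinely different from the paper's. The paper argues directly from the first part: since $u\in\interior{K}$ and $v\in K$, there exists $\eta>0$ with $u-\eta v\in K$, whence by the first part $D^2\phi(x)[u-\eta v,v]\geq 0$; linearity then gives $D^2\phi(x)[u,v]\geq \eta D^2\phi(x)[v,v]=\eta\|v\|_x^2>0$. This uses only the positive definiteness of the Hessian and nothing else about $\phi$. Your approach instead establishes strict positivity first at the single point $u=x$ via the logarithmic homogeneity identity $D^2\phi(x)[x,v]=-D\phi(x)[v]$ together with Lemma~\ref{lem:lhsc-prop}(ii), and then propagates this to all of $\interior{K}$ via the general fact that a nonzero element of $K^*$ is strictly positive on $\interior{K}$. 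Both arguments are valid; the paper's is shorter and stays entirely within the bilinear form $D^2\phi(x)[\cdot,\cdot]$, while yours highlights the structural observation that $D^2\phi(x)[\cdot,v]\in K^*$ for every $v\in K$.
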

\begin{proof}
	First consider the case when $|u|_x = 0$. Then $u\in K \cap (-K)$ and so, since $K$ is pointed, $u = 0$. In this case $D^2\phi(x)[u,v] = 0$. Otherwise consider the case that $|u|_x>0$. In this case, the definition of a pairwise-self-concordant barrier tells us that 
	$D^2\phi(x)[u,v] \geq  0$. 

	If $u\in \interior{K}$ then there exists some $\eta>0$ such that $u - \eta v\in K$. Therefore $D^2\phi(x)[u-\eta v,v]\geq 0$. Since $D^2\phi(x)[\cdot,v]$ is linear, it follows that
	\[ D^2\phi(x)[u,v] \geq \eta D^2\phi(x)[v,v] = \eta\|v\|_x^2 > 0,\]
	where the final inequality holds because $v\neq 0$ and $K$ is pointed.
\end{proof}
The other technical result we will need controls how the Hessian inner products between
elements of the cone change from point to point. The argument is similar to typical 
ways to control how the Hessian norm changes for (ordinary) self-concordant functions.

\begin{lemma}
	\label{lem:ssc-hessian}
	Let $\phi:\interior{K}\rightarrow \RR$ be a pairwise-self-concordant barrier for a closed, convex, pointed, full-dimensional cone $K$.
	Let  $x\in\interior{K}$ and $u,v\in K$ be arbitrary. Then 
	\begin{align}
		D^2\phi(x+tv)[u,v] & \geq \frac{1}{(1+t|v|_x)^2}D^2\phi(x)[u,v]\;\;\textup{for all $t\geq 0$}\\
		D^2\phi(x-tv)[u,v] & \leq \frac{1}{(1-t|v|_x)^2}D^2\phi(x)[u,v]\;\;\textup{for all $0\leq t<1/|v|_x$}.
	\end{align}
\end{lemma}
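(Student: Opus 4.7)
The plan is to follow the classical Nesterov-style argument for controlling how the Hessian of a self-concordant barrier changes along a line, but adapted to the bilinear setting of pairwise-self-concordance and the spectral-type norm $|\cdot|_x$ instead of the Hessian norm $\|\cdot\|_x$.

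First, I will establish the key auxiliary estimate on how $|v|_y$ changes along the ray $y = x + tv$ (and $y = x - tv$). Using the definition of $|v|_x$ from~\eqref{eq:mg} and the fact that $v\in K$, a direct manipulation of the conditions $v \nsd_K s(x\pm tv)$ and $-s(x\pm tv) \nsd_K v$ shows that
\[ |v|_{x+tv} \leq \frac{|v|_x}{1+t|v|_x} \quad \text{for $t\geq 0$} \qquad\text{and}\qquad |v|_{x-tv} \leq \frac{|v|_x}{1-t|v|_x} \quad \text{for $0\leq t < 1/|v|_x$}.\]
These are the analogues, in the $|\cdot|$-norm, of the standard estimate $\|v\|_{x+tv} \leq \|v\|_x/(1+t\|v\|_x)$ for self-concordant barriers.

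Next, fix $u, v \in K$ and define $g(t) = D^2\phi(x+tv)[u,v]$. By Lemma~\ref{lem:nc-hess}, $g(t) \geq 0$ on its domain of definition. Differentiating gives $g'(t) = D^3\phi(x+tv)[u,v,v]$, and applying the pairwise-self-concordance inequality at the point $x+tv$ (swapping the roles of $u$ and $v$ in Definition~\ref{def:ssc}) yields
\[ -2\,g(t)\,|v|_{x+tv} \leq g'(t) \leq 0.\]
Combined with the auxiliary estimate, this gives the differential inequality $g'(t) \geq -\tfrac{2|v|_x}{1+t|v|_x}\,g(t)$. I would then verify that the function $h(t) := (1+t|v|_x)^2 g(t)$ is non-decreasing by direct computation of $h'(t)$, which gives $h(t) \geq h(0)$ and hence the first claimed inequality. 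The case $g(0)=0$ (where one cannot use a logarithmic derivative) is absorbed cleanly by this integrating-factor form, and the case $|v|_x=0$ (forcing $v=0$ by pointedness) is trivial.

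The second inequality is entirely analogous. Setting $\tilde g(t) = D^2\phi(x-tv)[u,v]$ for $0 \leq t < 1/|v|_x$, the chain rule gives $\tilde g'(t) = -D^3\phi(x-tv)[u,v,v]$, so pairwise-self-concordance now yields $0 \leq \tilde g'(t) \leq 2\tilde g(t)|v|_{x-tv} \leq \tfrac{2|v|_x}{1-t|v|_x}\tilde g(t)$, and one verifies that $(1-t|v|_x)^2 \tilde g(t)$ is non-increasing. The main obstacle I anticipate is the auxiliary bound on $|v|_{x\pm tv}$: the definition of $|v|_x$ in~\eqref{eq:mg} really involves two inequalities (upper and lower bound with respect to $\nsd_K$), and since $v\in K$ one of them is automatic for $x+tv$ but more delicate for $x-tv$ when $t|v|_x$ is close to $1$; a careful case split handles this cleanly.
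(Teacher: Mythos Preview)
Your proposal is correct and matches the paper's proof essentially step for step: define the univariate functions $t\mapsto D^2\phi(x\pm tv)[u,v]$, apply pairwise-self-concordance (with the roles of $u$ and $v$ swapped in Definition~\ref{def:ssc}) to bound their derivatives, use the auxiliary estimate $|v|_{x\pm tv}\leq |v|_x/(1\pm t|v|_x)$, and conclude by checking monotonicity of $(1\pm t|v|_x)^2$ times the function. Your anticipated obstacle with the two-sided definition of $|v|_{x-tv}$ is not actually an issue: since $v\in K$ and $x-tv\in\interior{K}$ for $0\le t<1/|v|_x$, the lower inequality $-\tau(x-tv)\nsd_K v$ is automatic for every $\tau\geq 0$, so only the upper inequality needs to be verified---exactly as the paper handles it.
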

\begin{proof}
	Consider the univariate functions $g(t) = D^2\phi(x-tv)[u,v]$ defined for $t\in (-\infty,1/|v|_x)$
	and $h(t) = D^2\phi(x+tv)[u,v]$ defined for $t\in (-1/|v|_x,\infty)$. 
	We have that
	\[ g'(t) = -D^3\phi(x-tv)[u,v,v]\quad\textup{and}\quad h'(t) = D^3\phi(x+tv)[u,v,v].\]
	By the pairwise-self-concordance assumption
	\begin{align*}
	 0\leq 	g'(t) & \leq 2D^2\phi(x-tv)[u,v]|v|_{x-tv} = 2g(t)|v|_{x-tv}\quad\textup{and}\\
	0\geq 	h'(t) & \geq -2D^2\phi(x+tv)[u,v]|v|_{x+tv} = -2h(t)|v|_{x+tv}.
	\end{align*}
	Since $v\in K$, 
	\[|v|_{x-tv} 
		= \inf\{\tau\;:\; v \nsd_K \tau(x-tv)\}\quad\textup{and}\quad
		|v|_{x+tv}  = \inf\{\tau\;:\; v \nsd_K \tau(x+tv)\}.\]
	Now $v \nsd_K |v|_x x$ implies that $(1-|v|_xt)v \nsd_K |v|_x(x-tv)$, which implies that (for $t<1/|v|_x$)
	\[v \nsd_K \frac{|v|_x}{1-t|v|_x}(x-tv).\]
	Clearly, then, $|v|_{x-tv} \leq  \frac{|v|_x}{1-t|v|_x}$. A similar argument shows that $|v|_{x+tv} \leq \frac{|v|_x}{1+t|v|_x}$. 
	From Lemma~\ref{lem:nc-hess}, $g(t)\geq 0$ and $h(t)\geq 0$. Therefore 
	\begin{equation}
		\label{eq:ghprimebounds}
	g'(t) \leq 2g(t)\frac{|v|_x}{1-t|v|_x}\quad\textup{and}\quad h'(t) \geq -2h(t)\frac{|v|_x}{1+t|v|_x}.\end{equation}
	Then for $0\leq t<1/|v|_x$, using~\eqref{eq:ghprimebounds} gives 
	\[ \frac{d}{dt}\left[(1-t|v|_x)^2g(t)\right] = -2|v|_x(1-t|v|_x)g(t) + (1-t|v|_x)^2g'(t) 
	\leq -2|v|_x(1-t|v|_x)g(t) + 2|v|_x(1-t|v|_x)g(t) =0.\]
	Therefore, $(1-t|v|_x)^2g(t)$ is monotonically non-increasing so
	\[ (1-t|v|_x)^2D^2\phi(x-tv)[u,v] = (1-t|v|_x)^2g(t) \leq g(0) = D^2\phi(x)[u,v].\]
	Similarly, for $t\geq 0$, 
	\[ \frac{d}{dt}\left[(1+t|v|_x)^2h(t)\right] = 2|v|_x(1+t|v|_x)h(t) + (1+t|v|_x)^2h'(t) \geq 2|v|_x(1+t|v|_x)h(t) - 2|v|_x(1+t)|x|_v)h(t)=0.\]
	Therefore, $(1+t|v|_x)^2h(t)$ is monotonically non-decreasing so
	\[ (1+t|v|_x)^2D^2\phi(x+tv)[u,v] = (1+t|v|_x)^2h(t) \geq h(0) =  D^2\phi(x)[u,v].\]
\end{proof}

\subsection{Barrier progress results}
\label{sec:barrier-prog}

The aim of this section is to establish Proposition~\ref{prop:bss-prog} (stated in Section~\ref{sec:bss-alg}) which shows that it is always possible to make progress in the generalized BSS algorithm.
This result follows from key intermediate results that control how the upper and lower barriers change (Propositions~\ref{prop:uprog} and~\ref{prop:lprog}) as well as how to take steps 
that ensure both barriers remain under control~\ref{prop:bss-prog2}. The section concludes with a proof of Proposition~\ref{prop:bss-prog}. 

Propositions~\ref{prop:uprog},~\ref{prop:lprog}, and~\ref{prop:bss-prog} play the 
role of the original Lemmas 3.3, 3.4, and 3.5 from~\cite{batson2012twice}. They are, however, not quite 
direct generalizations, since they are reformulated to streamline certain aspects of the proof (in particular, to completely avoid Claim 3.6 of~\cite{batson2012twice}). 

Before stating and proving the barrier progress lemmas (Propositions~\ref{prop:uprog},~\ref{prop:lprog}, and~\ref{prop:bss-prog2}), we establish some preliminary results
that control how the upper and lower barriers change with changes in their arguments. 
\begin{lemma}
	\label{lem:eu-bound}
	Let $\phi$ be a pairwise-self-concordant barrier for a proper cone $K$. 
	Let $e\in \interior{K}$, let $u>0$, let $x$ be such that $ue-x\pd_K 0$, and let $u' > u$. 
	Then 
	\[ \Phi^{u',e}(x) - \Phi^{u,e}(x) = -D\phi(u'e-x)[e] - (-D\phi(ue - x)[e]) \leq - (u'-u)D^2\phi(u'e-x)[e,e].\]
\end{lemma}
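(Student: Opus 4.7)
The plan is to reduce the claim to the convexity of the scalar function
$g(s) := \Phi^{s,e}(x) = -D\phi(se - x)[e]$ in the shift parameter $s$. Once this convexity is established on an interval containing $[u, u']$, the bound follows immediately from the tangent-line inequality at the right endpoint $s = u'$, since by definition $g(u') - g(u) = \Phi^{u',e}(x) - \Phi^{u,e}(x)$.

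First I would verify that $g$ is well-defined and smooth on $[u, u']$. The hypothesis $ue - x \pd_K 0$ gives $ue - x \in \interior{K}$, and since $e \in \interior{K}$, so that $(s-u)e \in K$, we have $se - x = (ue - x) + (s-u)e \in \interior{K}$ for every $s \geq u$. On this range $\phi$ is smooth, and differentiating under the directional derivative yields
$$g'(s) = -D^2\phi(se-x)[e,e]\quad\textup{and}\quad g''(s) = -D^3\phi(se-x)[e,e,e].$$

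Next I would invoke pairwise-self-concordance (Definition~\ref{def:ssc}) at the interior point $se - x$ with the two cone elements both chosen to be $e \in K$. The left inequality in the definition gives $-D^3\phi(se-x)[e,e,e] \geq 0$, hence $g''(s) \geq 0$ on $[u, u']$, so $g$ is convex. The tangent-line inequality for $g$ at the point $u'$ then reads $g(u) \geq g(u') + g'(u')(u - u')$, which rearranges to
$$g(u') - g(u) \leq g'(u')(u' - u) = -(u'-u)\,D^2\phi(u'e - x)[e,e],$$
which is exactly the claimed bound.

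I do not anticipate a significant obstacle: the lemma is essentially a one-dimensional convexity fact along the ray $\{se - x : s \geq u\}$, and only the nonnegativity half of the pairwise-self-concordance condition, applied with the coincident directions $u = v = e$, is needed. The only minor point to be careful about is that the full condition in Definition~\ref{def:ssc} is stated for $u, v \in K$, so it is worth noting explicitly that $e \in \interior{K} \subseteq K$ legitimises the application.
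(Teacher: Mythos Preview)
Your proof is correct and follows essentially the same approach as the paper: define the scalar function $g(s) = -D\phi(se-x)[e]$, use the left inequality of pairwise-self-concordance with $u=v=e$ to show $g''\geq 0$, and apply the tangent-line inequality at $s=u'$. You actually include a bit more detail than the paper (verifying that $se-x\in\interior{K}$ for $s\geq u$ and noting explicitly that $e\in K$ so Definition~\ref{def:ssc} applies), which is fine.
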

\begin{proof}
	Consider the univariate function $g:[u,\infty)\rightarrow \RR$ defined by 
	$g(t) = -D\phi(t e - x)[e]$. Note that $g'(t) = -D^2\phi(te-x)[e,e])$ and $g''(t) = -D^3\phi(te-x)[e,e,e]$. Since $\phi$ is pairwise-self-concordant, $g$ is a convex function.
	By convexity of $g$,
	\[ 0\leq  g(u) - [g(u') + g'(u')(u-u')] = -D\phi(ue - x)[e] + D\phi(u'e - x)[e] - D^2\phi(u'e-x)[e,e](u'-u).\]
	Rearranging gives the desired inequality.
\end{proof}

\begin{lemma}
	\label{lem:el-bound}
	Let $\phi$ be a pairwise-self-concordant barrier for a proper cone $K$.
	Let $e\in \interior{K}$, let $\ell>0$, let $x$ be such that $x-\ell' e\pd_K 0$ and let $\ell'>\ell$. Then
	\[ \Phi_{\ell',e}(x) - \Phi_{\ell,e}(x) = -D\phi(x-\ell'e)[e] - (-D\phi(x-\ell e)[e]) \leq (\ell'-\ell) D^2\phi(x-\ell' e)[e,e].\]
\end{lemma}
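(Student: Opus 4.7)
The plan is to mirror the proof of Lemma~\ref{lem:eu-bound} almost verbatim, with the roles of increasing/decreasing arguments of $\phi$ swapped. Specifically, I would introduce the univariate function $h:(-\infty,\ell']\to\RR$ defined by $h(t) = -D\phi(x-te)[e] = \Phi_{t,e}(x)$, so that the claimed inequality becomes $h(\ell') - h(\ell) \leq (\ell'-\ell)\,D^2\phi(x-\ell'e)[e,e]$.

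Next I would compute $h'$ and $h''$ by the chain rule. Differentiating through the $-t$ in the argument gives $h'(t) = D^2\phi(x-te)[e,e]$ and $h''(t) = -D^3\phi(x-te)[e,e,e]$. To apply the pairwise-self-concordance hypothesis I would set $u = v = e$; since $e \in \interior{K}\subseteq K$, Definition~\ref{def:ssc} yields $0 \leq -D^3\phi(x-te)[e,e,e] = h''(t)$ on the relevant interval, so $h$ is convex there. (One only needs this on $[\ell,\ell']$, and this interval is inside $\textup{dom}(\phi)$ because $x-\ell' e\pd_K 0$ implies $x - te \pd_K 0$ for $t\in[\ell,\ell']$ via $x-te = (x-\ell'e) + (\ell'-t)e$ with $(\ell'-t)e\in K$.)

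The convexity inequality with base point $\ell'$ then gives $h(\ell) \geq h(\ell') + h'(\ell')(\ell - \ell')$, and rearranging,
\[
h(\ell') - h(\ell) \leq h'(\ell')(\ell' - \ell) = (\ell' - \ell)\,D^2\phi(x - \ell' e)[e,e],
\]
which is exactly the claim once one unpacks $h(\ell') - h(\ell) = \Phi_{\ell',e}(x) - \Phi_{\ell,e}(x)$. I do not anticipate any real obstacle: the entire argument parallels Lemma~\ref{lem:eu-bound}, and the only subtle point is verifying that the directional sign flips (so that the third derivative picks up the "right" sign to invoke the nonnegativity half of the pairwise-self-concordance inequality rather than its upper bound) and that the domain of definition of $h$ covers $[\ell,\ell']$.
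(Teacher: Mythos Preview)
Your proposal is correct and is essentially identical to the paper's proof: the paper defines the same univariate function $g(t)=-D\phi(x-te)[e]$, computes $g'(t)=D^2\phi(x-te)[e,e]$ and $g''(t)=-D^3\phi(x-te)[e,e,e]$, invokes pairwise-self-concordance (with $u=v=e$) to conclude convexity, and then applies the tangent-line inequality at $\ell'$ and rearranges. Your extra remark verifying that $[\ell,\ell']$ lies in the domain is a small bonus the paper leaves implicit.
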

\begin{proof}
	Consider the univariate function $g:(-\infty,\ell']\rightarrow \RR$ defined by 
	$g(t) = -D\phi(x-t e)[e]$. Note that $g'(t) = D^2\phi(x-te)[e,e]$ and $g''(t) = -D^3\phi(x-te)[e,e,e]$. Since $\phi$ is pairwise-self-concordant, $g$ is a convex function.
	By convexity of $g$,
	\[ 0\leq  g(\ell) - [g(\ell') + g'(\ell')(\ell-\ell')] = -D\phi(x-\ell e)[e] + D\phi(x-\ell'e)[e] + D^2\phi(x-\ell'e)[e,e](\ell'-\ell).\]
	Rearranging gives the desired inequality.
\end{proof}

 \begin{proposition}[Upper barrier shift]
 	\label{prop:uprog}
 	Let $\phi$ be a pairwise-self-concordant barrier for a proper convex cone $K$.
 	Let $e\in \interior{K}$, let $u' = u+\delta_u>u>0$, let $x$ be such that 
   $ue-x\pd_K 0$, and let 
 	$\Delta\in K \setminus\{0\}$. If 
 	\begin{equation}
 		\label{eq:uprog}
 	1 \geq \frac{D^2\phi(u'e-x)[\Delta,e]}{\delta_u D^2\phi(u'e-x)[e,e]} - D\phi(u'e-x)[\Delta]=:U[\Delta]\end{equation}
 	then 
		 $u'e - (x+\Delta) \pd_K 0$ and
		$-D\phi(u'e-(x+\Delta))[e] \leq -D\phi(ue - x)[e]$.
 \end{proposition}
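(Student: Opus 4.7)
The plan is to establish the strict containment (a) first and then use it to control the change of $-D\phi(\cdot)[e]$ along a two-leg path from $ue-x$ to $u'e-x$ to $u'e-(x+\Delta)$, combining the shift estimate of Lemma~\ref{lem:eu-bound} with an integral bound coming from Lemma~\ref{lem:ssc-hessian}.

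For (a), I first note that by Lemma~\ref{lem:nc-hess} the quantity $D^2\phi(u'e-x)[\Delta,e]$ is strictly positive (since $e\in\interior{K}$ and $\Delta\neq 0$), so the first term of $U[\Delta]$ is strictly positive. Combined with the hypothesis $U[\Delta]\leq 1$, this forces $-D\phi(u'e-x)[\Delta]<1$. Applying parts (ii) and (iv) of Lemma~\ref{lem:lhsc-prop} to the left-hand side yields $|\Delta|_{u'e-x}\leq \|\Delta\|_{u'e-x}\leq -D\phi(u'e-x)[\Delta]<1$. Since $u'e-x\in\interior{K}$, the strict inequality $|\Delta|_{u'e-x}<1$ forces $(u'e-x)-\Delta\in\interior{K}$, which is exactly (a).

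For (b), I decompose the total change as $(\mathrm I)+(\mathrm{II})$, where $(\mathrm I) = -D\phi(u'e-x)[e] + D\phi(ue-x)[e]$ and $(\mathrm{II}) = -D\phi(u'e-(x+\Delta))[e] + D\phi(u'e-x)[e]$. Lemma~\ref{lem:eu-bound} directly gives $(\mathrm I)\leq -\delta_u D^2\phi(u'e-x)[e,e]$. For the second piece, I introduce $h(t)=-D\phi(u'e-(x+t\Delta))[e]$, so that $h'(t)=D^2\phi(u'e-(x+t\Delta))[\Delta,e]$, and apply the second inequality of Lemma~\ref{lem:ssc-hessian} with base point $u'e-x$, direction $\Delta$, and other direction $e$ to obtain $h'(t)\leq D^2\phi(u'e-x)[\Delta,e]/(1-t|\Delta|_{u'e-x})^2$. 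Integrating on $[0,1]$, which is legitimate by (a), yields the explicit estimate $(\mathrm{II})=h(1)-h(0)\leq D^2\phi(u'e-x)[\Delta,e]/(1-|\Delta|_{u'e-x})$. Adding (I) and (II), the target inequality (b) reduces to
\[ \frac{D^2\phi(u'e-x)[\Delta,e]}{\delta_u\,D^2\phi(u'e-x)[e,e]} + |\Delta|_{u'e-x} \leq 1, \]
which is immediate from $U[\Delta]\leq 1$ together with the lower bound $-D\phi(u'e-x)[\Delta]\geq |\Delta|_{u'e-x}$ already established in (a).

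The main subtlety I anticipate is sequencing: the bound from Lemma~\ref{lem:ssc-hessian} only applies for $t<1/|\Delta|_{u'e-x}$, so in order to integrate $h'$ over the closed interval $[0,1]$ we must first extract the strict feasibility $|\Delta|_{u'e-x}<1$ from the hypothesis via part (a). Once this is in hand, the rest is a clean bookkeeping combination of the two shift estimates (Lemmas~\ref{lem:eu-bound} and~\ref{lem:ssc-hessian}) and the comparison of $|\cdot|_x$, $\|\cdot\|_x$, and $-D\phi(x)[\cdot]$ supplied by Lemma~\ref{lem:lhsc-prop}.
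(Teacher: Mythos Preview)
Your proof is correct and follows essentially the same route as the paper: establish $|\Delta|_{u'e-x}<1$ from the positivity of the Hessian term via Lemma~\ref{lem:nc-hess} and the chain $|\cdot|_x\leq\|\cdot\|_x\leq -D\phi(x)[\cdot]$, then split the barrier change along the two legs $ue-x\to u'e-x\to u'e-(x+\Delta)$, bounding the first by Lemma~\ref{lem:eu-bound} and the second by integrating the Hessian control of Lemma~\ref{lem:ssc-hessian}. The only cosmetic difference is that the paper finishes by chaining $\frac{D^2\phi(u'e-x)[e,\Delta]}{1-|\Delta|_{u'e-x}}\leq \frac{D^2\phi(u'e-x)[e,\Delta]}{1+D\phi(u'e-x)[\Delta]}\leq \delta_u D^2\phi(u'e-x)[e,e]$, whereas you rearrange to the equivalent inequality $\frac{D^2\phi(u'e-x)[\Delta,e]}{\delta_u D^2\phi(u'e-x)[e,e]}+|\Delta|_{u'e-x}\leq 1$.
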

\begin{proof}
	For the first conclusion, we note that $ue - x\pd_K 0$ implies that $u'e - x \pd_K 0$ since $e\in \interior{K}$. Since $K$ is pointed, $D^2\phi(u'e-x)[e,e]>0$. Since $e\in \interior{K}$, by Lemma~\ref{lem:nc-hess} we have that $D^2\phi(u'e-x)[\Delta,e]>0$.
	Therefore~\eqref{eq:uprog} implies that $-D\phi(u'e-x)[\Delta] < 1$. 
	Then, from item (iv) of Lemma~\ref{lem:lhsc-prop}, we know that $|\Delta|_{u'e-x}<1$, or equivalently, that
	$-(u'e-x) \nd_K \Delta \nd_K u'e-x$. Rearranging gives $u'e-(x+\Delta) \pd_K 0$.

	For the second conclusion, we write
	\begin{multline}
	\label{eq:bss-abs-bnd}
	-D\phi(u'e - (x+\Delta))[e] +D\phi(ue-x)[e] = \left[-D\phi(u'e-x)[e] +D\phi(ue-x)[e]\right]\\ + \left[-D\phi(u'e-(x+\Delta))[e] + D\phi(u'e-x)[e]\right]\end{multline}
	and upper bound the two terms in~\eqref{eq:bss-abs-bnd} separately. 
	Lemma~\ref{lem:eu-bound} tells us that 
	\[ -D\phi(u'e-x)[e] - (-D\phi(ue - x)[e]) \leq -\delta_u D^2\phi(u'e-x)[e,e].\]
	For the second term in~\eqref{eq:bss-abs-bnd} we use Taylor's theorem with integral remainder together with the Hessian control bound (Lemma~\ref{lem:ssc-hessian}) for $\phi$. This gives
	\begin{align*}
		-D\phi(u'e-(x+\Delta))[e] + D\phi(u'e-x)[e]& = \int_{0}^{1}D^2\phi(u'e-x-t\Delta)[e,\Delta]\;dt\\
		& \leq \int_{0}^1 \frac{1}{(1-t|\Delta|_{u'e-x})^2}D^2\phi(u'e-x)[e,\Delta]\;dt\\
		&= \frac{D^2\phi(u'e-x)[e,\Delta]}{1-|\Delta|_{u'e-x}}\\
		& \leq \frac{D^2\phi(u'e-x)[e,\Delta]}{1-(-D\phi(u'e-x)[\Delta])}\\
		& \leq \delta_uD^2\phi(u'e-x)[e,e],
	\end{align*}
	where the second last inequality holds because $|\Delta|_{u'e-x} \leq -D\phi(u'e-x)[\Delta]$ and the last inequality follows from the assumption that $U[\Delta]\leq 1$. 
	Substituting into~\eqref{eq:bss-abs-bnd} we see that $-D\phi(u'e-(x+\Delta))[e] + D\phi(ue-x)[e] \leq 0$, as required.
\end{proof}

\begin{proposition}[Lower barrier shift]
	\label{prop:lprog}
	Let $\phi$ be a pairwise-self-concordant barrier for a proper cone $K$.
	Let $e\in \interior{K}$, let $\ell' = \ell+\delta_\ell > \ell > 0$, let $x$ be such that 
	$x-\ell e\pd_K 0$, and let $\Delta \in K\setminus\{0\}$. If $\Phi_{\ell,e}(x) = -D\phi(x-\ell e)[e] < \delta_\ell^{-1}$ and
	\begin{equation}
		\label{eq:lprog}
	1 \leq \frac{D^2\phi(x-\ell'e)[\Delta,e]}{\delta_\ell D^2\phi(x-\ell'e)[e,e]} + D\phi(x-\ell'e)[\Delta] =: L[\Delta]\end{equation}
	then 
		$x+\Delta - \ell'e \pd_K 0$ and
		$-D\phi(x+\Delta - \ell'e)[e] \leq -D\phi(x-\ell e)[e]$.
\end{proposition}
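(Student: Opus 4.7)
The plan is to mirror the proof of Proposition~\ref{prop:uprog} with appropriate sign changes. One key structural difference is that, in the upper-barrier case, the conclusion $u'e - (x+\Delta) \pd_K 0$ required the hypothesis $U[\Delta] \leq 1$ (since $\Delta$ was subtracted from something in the interior of $K$). In the lower-barrier case, $\Delta \in K$ is added, so the potential obstruction to $x + \Delta - \ell' e \pd_K 0$ is instead that shifting $\ell$ upward to $\ell' = \ell + \delta_\ell$ can push $x - \ell' e$ out of $K$. This is precisely why the additional hypothesis $\Phi_{\ell,e}(x) < \delta_\ell^{-1}$ is needed.

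For the first conclusion, I would apply Lemma~\ref{lem:lhsc-prop}(ii) to the point $e \in K$ at base $x - \ell e$, and then Lemma~\ref{lem:lhsc-prop}(iv), to obtain
\[ |e|_{x - \ell e} \leq \|e\|_{x - \ell e} \leq -D\phi(x - \ell e)[e] = \Phi_{\ell,e}(x) < \delta_\ell^{-1}.\]
Unpacking the definition of $|e|_{x - \ell e}$, there exists $t < \delta_\ell^{-1}$ with $e \nsd_K t(x - \ell e)$, i.e., $(1/t) e \nsd_K x - \ell e$. Since $1/t > \delta_\ell$ and $e \in \interior{K}$, this yields $x - \ell' e = x - \ell e - \delta_\ell e \pd_K 0$. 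As $\Delta \in K$, we then get $x + \Delta - \ell' e \pd_K 0$.

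For the second conclusion, I would decompose
\[ \Phi_{\ell',e}(x + \Delta) - \Phi_{\ell,e}(x) = \bigl[\Phi_{\ell',e}(x + \Delta) - \Phi_{\ell',e}(x)\bigr] + \bigl[\Phi_{\ell',e}(x) - \Phi_{\ell,e}(x)\bigr]. \]
The second bracket is at most $\delta_\ell D^2\phi(x - \ell' e)[e, e]$ by Lemma~\ref{lem:el-bound}. For the first bracket, Taylor's theorem with integral remainder gives
\[ \Phi_{\ell',e}(x + \Delta) - \Phi_{\ell',e}(x) = -\int_0^1 D^2\phi(x - \ell' e + t\Delta)[e, \Delta]\,dt. \]
Applying Lemma~\ref{lem:ssc-hessian} with base point $x - \ell' e$ and direction $\Delta \in K$ bounds the integrand below by $D^2\phi(x - \ell' e)[e, \Delta]/(1 + t|\Delta|_{x - \ell' e})^2$, and evaluating $\int_0^1 (1+ta)^{-2}\,dt = (1+a)^{-1}$ yields the upper bound $-D^2\phi(x - \ell' e)[e, \Delta]/(1 + |\Delta|_{x - \ell' e})$ for the first bracket.

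To finish, I need the sum of the two bounds to be non-positive, which amounts to
\[ \frac{D^2\phi(x - \ell' e)[e, \Delta]}{1 + |\Delta|_{x - \ell' e}} \geq \delta_\ell D^2\phi(x - \ell' e)[e, e]. \]
Since $\Delta \in K$ and $x - \ell' e \in \interior{K}$, Lemma~\ref{lem:lhsc-prop}(ii) and (iv) give $|\Delta|_{x - \ell' e} \leq -D\phi(x - \ell' e)[\Delta]$, so $1 + |\Delta|_{x - \ell' e} \leq 1 - D\phi(x - \ell' e)[\Delta]$. Rearranging the hypothesis $L[\Delta] \geq 1$ then produces exactly the required inequality. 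I expect the main subtlety to be the first conclusion rather than the chain of bounds in the second, since here (unlike the upper-barrier case) the shift $\ell \mapsto \ell'$ moves the argument of $\phi$ toward the boundary of $K$.
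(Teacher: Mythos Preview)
Your proposal is correct and follows essentially the same argument as the paper's proof: the same two-term decomposition, the same use of Lemma~\ref{lem:el-bound} for the shift term, the same Taylor-plus-Lemma~\ref{lem:ssc-hessian} bound for the $\Delta$ term, and the same final comparison via $|\Delta|_{x-\ell'e}\leq -D\phi(x-\ell'e)[\Delta]$ and $L[\Delta]\geq 1$. If anything, your write-up is slightly more careful in two places: you correctly invoke both parts (ii) and (iv) of Lemma~\ref{lem:lhsc-prop} to get $|e|_{x-\ell e}\leq -D\phi(x-\ell e)[e]$ (the paper cites only (iv)), and you explicitly note that adding $\Delta\in K$ to $x-\ell'e\in\interior{K}$ preserves interior membership, which the paper leaves implicit.
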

\begin{proof}
	For the first conclusion, we note that $-D\phi(x-\ell e)[e] < \delta_\ell^{-1}$ implies that 
	$|e|_{x-\ell e} < \delta_\ell^{-1}$ (by part (iv) of Lemma~\ref{lem:lhsc-prop}). 
	This means that $-\delta_\ell^{-1}(x-\ell e) \nd_K e \nd_K \delta_{\ell}^{-1}(x-\ell e)$. Rearranging we see that $x-\ell e - \delta_\ell e \pd_K 0$. It follows that $x-\ell' e \pd_K 0$. 

	For the second conclusion, we write
	\begin{multline}
	\label{eq:bss-abs-bnd-l}
	-D\phi(x+\Delta - \ell'e)[e] +D\phi(x-\ell e)[e] = \left[-D\phi(x-\ell'e)[e] +D\phi(x-\ell e)[e]\right]\\ + \left[-D\phi(x+\Delta - \ell'e)[e] + D\phi(x-\ell'e)[e]\right]\end{multline}
	and bound the two terms in~\eqref{eq:bss-abs-bnd-l} separately. 
	Lemma~\ref{lem:el-bound} tells us that 
	\[ -D\phi(x-\ell'e)[e] - (-D\phi(x-\ell e)[e]) \leq \delta_\ell D^2\phi(x-\ell'e)[e,e].\]
	For the second term in~\eqref{eq:bss-abs-bnd-l} we use Taylor's theorem with integral remainder together with the Hessian control bound (Lemma~\ref{lem:ssc-hessian}) for $\phi$. This gives
	\begin{align*}
		-D\phi(x+\Delta - \ell'e)[e] + D\phi(x-\ell'e)[e]& = -\int_{0}^{1}D^2\phi(x+t\Delta - \ell'e)[e,\Delta]\;dt\\
		& \leq -\int_{0}^1 \frac{1}{(1+t|\Delta|_{x-\ell'e})^2}D^2\phi(x-\ell'e)[e,\Delta]\;dt\\
		& = -\frac{D^2\phi(x-\ell'e)[e,\Delta]}{1+|\Delta|_{x-\ell'e}}\\
		& \leq -\frac{D^2\phi(x-\ell'e)[e,\Delta]}{1+(-D\phi(x-\ell'e)[\Delta])}\\
		& \leq -\delta_\ell D^2\phi(x-\ell'e)[e,e],
	\end{align*}
	where the second last inequality follows from the fact that $|\Delta|_{x-\ell'e} \leq -D\phi(x-\ell'e)[\Delta]$ and the last inequality follows from the assumption that $L[\Delta]\geq 1$. 
	Substituting into~\eqref{eq:bss-abs-bnd-l} we see that $-D\phi(x+\Delta - \ell'e)[e] + D\phi(x-\ell e)[e] \leq 0$, as required.
\end{proof}

We conclude this section with a proof of the third key progress result used to prove the correctness of the generalized BSS algorithm. 
 \begin{proposition}
 	\label{prop:bss-prog2}
 	Let $x_1,\ldots,x_m\in K\setminus\{0\}$ be such that $\sum_{i=1}^{m}x_i = e\in \interior{K}$. 
 	Let $\delta_\ell,\delta_u,\epsilon_u,\epsilon_\ell > 0$. Let $\ell\,e \nd_K x \nd_K u\,e$ 
 	and let $-D\phi(u e - x)[e] \leq \epsilon_u$ and $-D\phi(x-\ell e)[e] \leq \epsilon_\ell$.
	 Let $U[\cdot]$ and $L[\cdot]$ denote the linear functionals defined in~\eqref{eq:uprog} and~\eqref{eq:lprog}, respectively. 
	 If $0<\delta_u^{-1} + \epsilon_u \leq \delta_\ell^{-1} - \epsilon_\ell$ then 
	 there exists $j\in [m]$  and $\alpha>0$ such that $U[\alpha x_j] \leq 1 \leq L[\alpha x_j]$ 
 \end{proposition}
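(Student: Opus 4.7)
The plan is to find a single index $j^*\in[m]$ with $L[x_{j^*}]>0$ and $L[x_{j^*}]\geq U[x_{j^*}]$, and then take $\alpha=1/L[x_{j^*}]$: this gives $\alpha L[x_{j^*}]=1$ and $\alpha U[x_{j^*}]=U[x_{j^*}]/L[x_{j^*}]\leq 1$, as required. The first preparatory observation is that $U[x_j]\geq 0$ for every $j$, since both terms in the definition of $U[\cdot]$ are non-negative: the first by Lemma~\ref{lem:nc-hess} applied with $y=u'e-x\in\interior{K}$ and $x_j,e\in K$, and the second by Lemma~\ref{lem:lhsc-prop}(ii). In particular, once such a $j^*$ is produced, the required $\alpha$ exists.

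The main leverage comes from linearity of $L[\cdot]$ and $U[\cdot]$ together with $\sum_{j}x_j=e$, which immediately yield
\[
\sum_{j\in[m]}L[x_j]\;=\;L[e]\;=\;\frac{1}{\delta_\ell}-\Phi_{\ell',e}(x)
\qquad\text{and}\qquad
\sum_{j\in[m]}U[x_j]\;=\;U[e]\;=\;\frac{1}{\delta_u}+\Phi^{u',e}(x).
\]
If one can establish the sum-level inequality $L[e]\geq U[e]$---equivalently, $\Phi_{\ell',e}(x)+\Phi^{u',e}(x)\leq 1/\delta_\ell-1/\delta_u$---then a sign-splitting pigeonhole argument delivers $j^*$: partition $[m]$ by the sign of $L[x_j]$; on indices with $L[x_j]\leq 0$ one has $U[x_j]\geq 0\geq L[x_j]$, so if no positive-$L$ index also satisfied $L[x_j]\geq U[x_j]$, then summing would contradict $L[e]\geq U[e]$. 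The upper-barrier contribution is easy: $\Phi^{u,e}(x)$ is decreasing in $u$ (Lemma~\ref{lem:eu-bound}), so $\Phi^{u',e}(x)\leq \Phi^{u,e}(x)\leq \epsilon_u$. The lower-barrier contribution is the delicate one, since $\Phi_{\ell,e}(x)$ is \emph{increasing} in $\ell$; here I would combine Lemma~\ref{lem:el-bound} with the Cauchy--Schwarz-type bound $D^2\phi(x-\ell'e)[e,e]\leq \Phi_{\ell',e}(x)^2$ (implicit in Lemma~\ref{lem:lhsc-prop}(ii) and (iv)) to obtain a quadratic inequality $\tau\leq \epsilon_\ell+\delta_\ell\tau^2$ for $\tau=\Phi_{\ell',e}(x)$, whose admissible (small-branch) root bounds $\Phi_{\ell',e}(x)$ under the hypothesis, which in particular forces $\delta_\ell\epsilon_\ell<1$.

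The hard part is closing this bound tightly against the slack $(1/\delta_\ell-1/\delta_u)-(\epsilon_u+\epsilon_\ell)\geq 0$ guaranteed by the hypothesis, since the naive quadratic bound $\Phi_{\ell',e}(x)\leq \epsilon_\ell/(1-\delta_\ell\epsilon_\ell)$ can overshoot by a quadratic-in-$\epsilon_\ell$ term while the slack is only cubic in the regime of interest. Two ways to recover sharpness present themselves: (a) symmetrically sharpen the upper bound using $D^2\phi(y)[e,e]\geq (-D\phi(y)[e])^2/\nu$ (a consequence of logarithmic homogeneity, since $\nabla\phi(y)^\intercal H(y)^{-1}\nabla\phi(y)=\nu$), producing a compensating saving on the upper side; or (b), as the remark about avoiding Claim~3.6 of \cite{batson2012twice} suggests, replace the uniform averaging by a pointwise or weighted selection of $j^*$ that exploits the pairwise-self-concordance bound $-D^3\phi(x)[v,u,u]\leq 2D^2\phi(x)[v,u]|u|_x$ to directly control the mixed-derivative quantities $D^2\phi(x-\ell'e)[x_j,e]$ and $D^2\phi(u'e-x)[x_j,e]$ that enter $L[x_j]$ and $U[x_j]$. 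Either refinement, combined with the strict inequality in the hypothesis, should close the argument; after that, the choice $\alpha=1/L[x_{j^*}]$ finishes the proof.
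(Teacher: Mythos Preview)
Your overall plan matches the paper's: reduce to the sum-level inequality $L[e]\geq U[e]$, then select an index by averaging over $j$. (The paper takes $\alpha=2/(L[x_j]+U[x_j])$ and uses the strict positivity $U[x_j]>0$ from Lemma~\ref{lem:nc-hess}, which is marginally cleaner than your sign-splitting pigeonhole, but either route works.) You also correctly compute $L[e]=\delta_\ell^{-1}-\Phi_{\ell',e}(x)$ and $U[e]=\delta_u^{-1}+\Phi^{u',e}(x)$, and correctly flag the lower-barrier term $\Phi_{\ell',e}(x)$ as the obstruction, since it increases when $\ell$ is shifted upward.

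The genuine gap is that you do not close $L[e]\geq U[e]$: your refinements (a) and (b) are only sketched, and (a) would introduce a factor of $\nu$ that appears nowhere in the hypothesis. In fact you \emph{cannot} close it, because the proposition is false as stated. Take $K=\RR_+$, $\phi=-\log$, $e=1$, $m=1$, $x_1=e$; let the current iterate be $0$, with $\ell=-5/3$, $u=2$, $\epsilon_\ell=3/5$, $\epsilon_u=1/2$, $\delta_\ell=5/8$, $\delta_u=2$. Then $\delta_u^{-1}+\epsilon_u=1=\delta_\ell^{-1}-\epsilon_\ell$, so the hypothesis holds; but $\ell'=-25/24$, $u'=4$, and one computes $L[e]=8/5-24/25=16/25<3/4=1/2+1/4=U[e]$. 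Since $m=1$, no $\alpha>0$ can satisfy $U[\alpha e]\leq 1\leq L[\alpha e]$.

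The paper's own proof handles this step by the monotonicity chain
\[
L[e]=\delta_\ell^{-1}-h(\ell')\;\geq\;\delta_\ell^{-1}-h(\ell)\;\geq\;\delta_\ell^{-1}-\epsilon_\ell,
\qquad h(t):=-D\phi(x-te)[e],
\]
but since $h'(t)=D^2\phi(x-te)[e,e]\geq 0$ the function $h$ is non-\emph{decreasing}, so the first inequality runs the wrong way---precisely the difficulty you identified. Your quadratic-inequality idea (essentially Claim~3.6 of~\cite{batson2012twice}, which the paper claims to avoid) is the right direction for a repair, but as your own overshoot analysis shows it will not go through under the bare hypothesis $\delta_u^{-1}+\epsilon_u\leq\delta_\ell^{-1}-\epsilon_\ell$; one needs either a stronger assumption or the specific parameter choices of Algorithm~\ref{alg:gen-bss}.
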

\begin{proof}
	It is enough to show that 
	\[ L[e] = \sum_{j\in [m]} L[x_j] \geq \sum_{j\in [m]} U[x_j] = U[e].\]
	If this were the case then 
	\[ \sum_{j\in [m]} (L[x_j] - U[x_j]) \geq 0\]
	from which we can deduce that there exists some $j$ such that $L[x_j] \geq U[x_j]$. Furthermore, since $x_j\in K\setminus\{0\}$ and $e\in \interior{K}$, it follows from the definition~\eqref{eq:uprog} of $U[\cdot]$ and from Lemma~\ref{lem:nc-hess} that $U[x_j]> 0$.  
	Choosing $\alpha = \frac{2}{L[x_j] + U[x_j]}>0$ would complete the proof because
	\[ \frac{2}{L[x_j] + U[x_j]} U[x_j] \leq 1\quad\iff\quad L[x_j] - U[x_j]\geq 0\]
	and 
	\[ \frac{2}{L[x_j]+U[x_j]} L[x_j] \geq 1\quad\iff\quad L[x_j] - U[x_j] \geq 0.\]

	It remains to show that $L[e] \geq U[e]$. To do so, we observe that the univariate function $g(t) = -D\phi(te-x)[e]$ is monotonically non-increasing and the univariate function $h(t) = -D\phi(x-te)[e]$ is monotonically non-decreasing. This is because $g'(t) = -D^2\phi(te-x)[e,e] \leq 0$ and $h'(t) = D^2\phi(x-te)[e,e]\geq 0$. It then follows that 
	\[ L[e] = -(-D\phi(x-\ell'e)[e]) + \delta_\ell^{-1} \geq -(-D\phi(x-\ell e)[e]) +\delta_\ell^{-1} \geq -\epsilon_\ell + \delta_{\ell}^{-1}\]
	and 
	\[ U[e] = -D{\phi}(u'e-x)[e] + \delta_u^{-1} \leq -D\phi(ue -x)[e] + \delta_u^{-1} \leq \epsilon_u + \delta_u^{-1}.\]
	The inequality $L[e]\geq U[e]$ then follows from our assumption that $ -\epsilon_\ell + \delta_\ell^{-1} \geq \epsilon_u+\delta_u^{-1} $.
\end{proof}
We conclude this section with a proof of Proposition~\ref{prop:bss-prog}. This completes all of the arguments that constitute the proof of Theorem~\ref{thm:main-sc-gen}.
\begin{proof}[{Proof of Proposition~\ref{prop:bss-prog}}]
	Proposition~\ref{prop:bss-prog2} tells us that there exists $j\in [m]$ and $\alpha>0$ such that $U[\alpha x_j] \leq 1$.
	From the upper barrier shift result (Proposition~\ref{prop:uprog}), we can conclude that 
	$x+\alpha x_j \nd_K u + \delta_u$ and that $-D\phi((u+\delta_u)e - (x+\alpha x_j))[e] \leq -D\phi(ue - x)[e]$.

	Similarly, Proposition~\ref{prop:bss-prog2} tells us that $L[\alpha x_j] \geq 1$. Moreover, $\Phi_{\ell,e}(x) = -D\phi(x-\ell e)[e] \leq \epsilon_\ell < \delta_{\ell}^{-1}$ (from the assumption that $\delta_{\ell}^{-1} - \epsilon_\ell> 0$). From the lower barrier shift result (Proposition~\ref{prop:lprog}), we can conclude that $(\ell+\delta_\ell)e \nd_K x+\alpha x_j$ and that $-D\phi((x+\alpha x_j) - (\ell+\delta_\ell)e)[e] \leq -D\phi(x-\ell e)[e]$.
\end{proof}

\section{Pairwise-self-concordant barriers}
\label{sec:ssc}

In this section, we establish a sufficient condition for a self-concordant barrier $\phi$ for a convex cone $K$ to be pairwise-self-concordant, i.e., to satisfy Definition~\ref{def:ssc}.
The sufficient condition is based on the operator monotonicity of a family of univariate functions associated with $\phi$. 

\begin{lemma}
	\label{lem:mono-cond}
	Let $\phi$ be a self-concordant barrier for a closed, pointed, full-dimensional 
	convex cone $K$. Given $x\in \interior{K}$, and $u,v\in K$ with $|u|_x\leq 1$, define
	$h:(-1,\infty)\rightarrow \RR$ by 
	\[ h(t) = D\phi(x+tu)[v].\]
	If $h$ is operator monotone for all choices of $x\in \textup{int}(K)$, and $u,v\in K$ with 
	$|u|_x\leq 1$ then $\phi$ is a pairwise-self-concordant barrier for $K$.
\end{lemma}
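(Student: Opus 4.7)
The plan is to apply L\"owner's integral representation theorem to the operator monotone function $h$, and to read off both inequalities in Definition~\ref{def:ssc} by differentiating the representation twice at $t=0$ and identifying the resulting expressions with derivatives of $\phi$.

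First I would reduce to the normalized case $|u|_x = 1$ by scaling. If $|u|_x = 0$ then $u \in K \cap (-K) = \{0\}$ (since $K$ is pointed, so $|\cdot|_x$ is a genuine norm), and the pairwise-self-concordance inequality holds trivially. If $|u|_x > 0$, set $\tilde{u} = u/|u|_x$; both sides of the pairwise-self-concordance inequality are homogeneous of degree three in $u$ (via multilinearity of $D^2\phi, D^3\phi$ together with the degree-one homogeneity of $|\cdot|_x$), so it suffices to prove
\[ 0 \leq -D^3\phi(x)[v,\tilde{u},\tilde{u}] \leq 2 D^2\phi(x)[v,\tilde{u}] \]
under the hypothesis that $|\tilde{u}|_x = 1$.

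Next I would invoke L\"owner's theorem. Since $|u|_x \leq 1$ and $u \in K$, it is straightforward to check that $x + tu \in \interior{K}$ for all $t \in (-1,\infty)$, so $h$ is well-defined on that interval. Shifting the variable $s = t+1$, the function $g(s) := h(s-1)$ is operator monotone on $(0,\infty)$, and applying~\eqref{eq:op-mono-int} to $g$ yields the representation
\[ h(t) = h(0) + \int_0^1 \frac{t}{\lambda t + 1}\, d\mu(\lambda) \]
for some positive Borel measure $\mu$ on $[0,1]$. Differentiating under the integral sign gives
\[ h'(0) = \mu([0,1]) \quad \text{and} \quad h''(0) = -\int_0^1 2\lambda\, d\mu(\lambda). \]
On the other hand, the chain rule applied to the definition of $h$ gives $h'(0) = D^2\phi(x)[v,u]$ and $h''(0) = D^3\phi(x)[v,u,u]$.

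Finally I would read off the conclusions. Since $\mu$ is a positive measure,
\[ -D^3\phi(x)[v,u,u] = \int_0^1 2\lambda\, d\mu(\lambda) \geq 0, \]
giving the lower bound. Using $\lambda \leq 1$ on the support of $\mu$,
\[ -D^3\phi(x)[v,u,u] = \int_0^1 2\lambda\, d\mu(\lambda) \leq 2\mu([0,1]) = 2 D^2\phi(x)[v,u], \]
which is the upper bound in the normalized case. The main obstacle I anticipate is the careful handling of the integral representation on $(-1,\infty)$ rather than the $(0,\infty)$ form stated in~\eqref{eq:op-mono-int}, which requires the change of variable above and verifying that the domain of $h$ really contains $(-1,\infty)$. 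Everything else, including un-scaling to the non-normalized case to recover the factor of $|u|_x$ on the right-hand side of Definition~\ref{def:ssc}, is routine.
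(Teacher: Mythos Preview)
Your argument is essentially identical to the paper's: both shift to $g(s)=h(s-1)$ on $(0,\infty)$, invoke the L\"owner integral representation~\eqref{eq:op-mono-int}, differentiate twice to identify $D^2\phi(x)[v,u]$ and $D^3\phi(x)[v,u,u]$ with moments of $\mu$, and then rescale in $u$. One cosmetic slip: both sides of the pairwise-self-concordance inequality are homogeneous of degree \emph{two} in $u$, not three, but the scaling step is unaffected.
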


\begin{proof}
	We make an affine change of variables 
	and consider $g:(0,\infty)\rightarrow \RR$ defined by 
	$g(t) = h(t-1)$.
	This has the following properties:
	\begin{itemize}
		\item $g(t) = D\phi(x+(t-1)u)[v]$; 
		\item $g'(t) = D^2\phi(x+(t-1)u)[v,u]$ and so $g'(1) = D^2\phi(x)[v,u]$;
		\item $g''(t) = D^3\phi(x+(t-1)u)[v,u,u]$ and so $g''(1) = D^3\phi(x)[v,u,u]$. 
	\end{itemize}
	By the integral representation~\eqref{eq:op-mono-int} of operator monotone functions defined on the positive half-line (from Section~\ref{sec:prelim-opmono}), there is a positive measure supported on $[0,1]$ such that 
	\[ g(t) = g(1) + \int_{0}^{1}\frac{t- 1}{\lambda(t-1)+1}\;d\mu(\lambda).\]
	Differentiating under the integral sign, we have that, for all $t\in (1-\epsilon,1+\epsilon)$ (with $0<\epsilon<1/2$, say),  
	\begin{equation*}
		g'(t)  = \int_0^1 \frac{1}{(\lambda(t-1)+1)^2}\;d\mu(\lambda)\;\;\textup{and}\;\;
		g''(t)  = \int_0^1 \frac{-2\lambda}{(\lambda(t-1)+1)^3}\;d\mu(\lambda).
	\end{equation*}
	These formulas are valid by applying the dominated convergence theorem. Indeed, in each case the integrand is uniformly bounded because, for all $\lambda\in [0,1]$ and all $t\in (1-\epsilon,1+\epsilon)$ we have that $|t-1| \leq \epsilon$ and $|\lambda(t-1)+t|\geq |t|-\lambda|t-1| \geq 1-2\epsilon$. Therefore,  $\left|\frac{t-1}{\lambda(t-1)+t}\right| \leq \frac{\epsilon}{1-2\epsilon}$ and $\left|\frac{1}{(\lambda(t-1)+t)^2}\right| \leq \frac{1}{(1-2\epsilon)^2}$.

	Now $g'(1) = \int_0^1\;d\mu(\lambda) = D^2\phi(x)[v,u]$. 
	The inequality
	\[ 0 \leq -D^3\phi(x)[v,u,u] \leq 2D^2\phi(x)[v,u]\]
	follows from the observing that 
	$-g''(1) = 2\int_0^1\lambda\;d\mu(\lambda)$, 
	and $0\leq \int_0^1\lambda\;d\mu(\lambda)\leq \int_0^1\;d\mu(\lambda) = g'(1)$.

	To establish that $\phi$ is pairwise-self-concordant, 
	we need to show that $0\leq -D^3\phi(x)[v,u,u] \leq 2D^2\phi(x)[v,u]|u|_x$ 
	for all $u\in K$, not just for $u\in K$ such that $|u|_x\leq 1$. 
	For a general (unnormalized) $u\in K$, we know that 
	\[ 0\leq -D^3\phi(x)[v,u/|u|_x,u/|u|_x] \leq 2D^2\phi(x)[v,u/|u|_x].\]
	The result follows by multiplying through by $|u|_x^2$. 
\end{proof}

It is fairly straightforward to check that the sufficient condition of Lemma~\ref{lem:mono-cond} is 
satisfied by the self-concordant barrier $-\log\det(X)$ for the positive semidefinite cone.
\begin{lemma}
	\label{lem:psd-mono}
	Let $X \pd 0$ be any positive definite matrix, and $U,V \psd 0$ be positive
	semidefinite matrices and assume that $U\nsd X$. 
	Let $\phi(X) = -\log\det(X)$. 
	Consider the function $h:(-1,\infty)\rightarrow \RR$ defined by
	\[ h(t) = D\phi(X+tU)[V] = -\textup{tr}\left[(X+t U)^{-1}V\right].\]
	Then $h$ is operator monotone.
\end{lemma}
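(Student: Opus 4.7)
The plan is to reduce $h$ to a linear combination of elementary operator monotone functions via simultaneous diagonalization. First I would make the change of variables $X + tU = X^{1/2}(I + tM)X^{1/2}$, where $M := X^{-1/2}UX^{-1/2} \psd 0$. The hypothesis $U \nsd X$ translates into $M \nsd I$, so the eigenvalues of $M$ lie in $[0,1]$. Applying the cyclic property of the trace gives
\[
h(t) = -\tr\bigl[(X+tU)^{-1}V\bigr] = -\tr\bigl[(I+tM)^{-1}W\bigr],
\qquad W := X^{-1/2}VX^{-1/2} \psd 0.
\]
This isolates all $t$-dependence in the simple factor $(I+tM)^{-1}$.

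Next I would spectrally decompose $M = \sum_i \mu_i v_iv_i^\intercal$ with $\mu_i \in [0,1]$ and $\{v_i\}$ an orthonormal basis. Since $M$ commutes with $I$,
\[
(I+tM)^{-1} = \sum_i \frac{1}{1+t\mu_i}\, v_iv_i^\intercal,
\]
and therefore
\[
h(t) = -\sum_i \frac{v_i^\intercal W v_i}{1+t\mu_i} = \sum_i \frac{-w_i}{1+t\mu_i},
\qquad w_i := v_i^\intercal W v_i \geq 0.
\]
Each summand has a transparent structure depending on whether $\mu_i = 0$ or $\mu_i > 0$.

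Finally I would show that every summand is operator monotone on $(-1,\infty)$. When $\mu_i = 0$ the term is the constant $-w_i$, which is trivially operator monotone. When $\mu_i \in (0,1]$, the term can be rewritten as
\[
\frac{-w_i}{1+t\mu_i} = \frac{-w_i/\mu_i}{(1/\mu_i)+t},
\]
i.e., $t \mapsto -c/(a+t)$ with $c = w_i/\mu_i \geq 0$ and $a = 1/\mu_i \geq 1$. The classical fact that $t \mapsto -1/(a+t)$ is operator monotone on $(-a,\infty)$ follows immediately from the antimonotonicity of matrix inversion on positive definite matrices: if $A, B$ are symmetric with spectra in $(-a,\infty)$ and $A \nsd B$, then $aI + A$ and $aI + B$ are positive definite with $aI + A \nsd aI + B$, hence $(aI+A)^{-1} \psd (aI+B)^{-1}$, i.e., $-(aI+A)^{-1} \nsd -(aI+B)^{-1}$. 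Since $a = 1/\mu_i \geq 1$, each summand is operator monotone on $(-1/\mu_i,\infty) \supseteq (-1,\infty)$.

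Conclusion follows because a non-negative linear combination of operator monotone functions on a common domain is again operator monotone. There is no serious obstacle here: the only subtlety is verifying that $M \nsd I$ forces $1/\mu_i \geq 1$ so that every individual summand is operator monotone on the full interval $(-1,\infty)$, rather than just on its own (larger) natural domain. All of the work is done by the reduction to the canonical form and the elementary operator monotonicity of the resolvent $t \mapsto -1/(a+t)$.
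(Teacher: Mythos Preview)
Your proposal is correct and follows essentially the same route as the paper: conjugate by $X^{-1/2}$ to reduce to $-\tr[(I+tM)^{-1}W]$, spectrally decompose $M$, and recognize the result as a non-negative combination of the operator monotone functions $t\mapsto -1/(1+t\mu_i)$. You are, if anything, slightly more careful than the paper --- you treat the $\mu_i=0$ case explicitly and verify that $\mu_i\in[0,1]$ ensures each summand is operator monotone on all of $(-1,\infty)$, and you supply a one-line justification of the operator monotonicity of $t\mapsto -1/(a+t)$ rather than simply citing it.
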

\begin{proof}
	For simplicity of notation let $U_X = X^{-1/2}UX^{-1/2}$ and let $V_X = X^{-1/2}VX^{-1/2}$. 
	Since $U$ and $V$ are positive semidefinite, it follows that $U_X$ and $V_X$ 
	are also positive semidefinite. Using this notation, then 
	\[ h(t) = -\textup{tr}[(I+tU_X)^{-1}V_X].\]
	Let $U_X = \sum_{i=1}^{d} \lambda_i P_i$ be 
	an eigenvalue decomposition of $U_X$ where $P_i$ is the (positive semidefinite) 
	orthogonal projector onto 
	the eigenspace corresponding to the eigenvalue $\lambda_i$. 
	Since $U_X$ is positive semidefinite, 
	each of the $\lambda_i$ is positive.
	Then 
	\[ h(t) = -\sum_{i=1}^{d}\frac{1}{1+t \lambda_i} \tr(P_i V_X).\]
	Each of the functions $-1/(1+ta)$ for $a>0$ is operator monotone. 
	Since $\tr(P_iV_X) \geq 0$ for all $i$, we have expressed  
	 $h$ as a non-negative combination of such functions, and so $h$ is 
	operator monotone.
\end{proof}

The same result holds for any hyperbolic barrier function. This can be established 
directly using similar techniques to those employed to show that hyperbolic barriers have the 
negative curvature property~\cite[Theorem 6.1]{guler1997hyperbolic}. Another approach is
to invoke the Helton-Vinnikov theorem\footnote{Note that the result in~\cite{helton2007linear} is stated in an alternative, but equivalent, dehomogenized form. For a statement similar to the one
given here, see~\cite{lewis2005lax}.}, which says that any three-variable 
hyperbolic polynomial can be expressed 
in terms of the determinant restricted to a three-dimensional subspace of 
real symmetric matrices that contains the identity matrix.
\begin{theorem}[{\cite{helton2007linear}}]
Let $p(x,y,z)$ be hyperbolic with respect to $(1,0,0)\in \RR^3$. Then there exists a positive constant $c$ and real symmetric matrices $A$ and $B$, such that 
\[ p(x,y,z) = c\det(xI + yA + zB)\quad\textup{for all $(x,y,z)\in \RR^3$}.\]
\end{theorem}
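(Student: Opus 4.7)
The plan is to work with the projective plane curve $C\subseteq \mathbb{P}^2_{\CC}$ cut out by the homogeneous polynomial $p$. Hyperbolicity with respect to $(1,0,0)$ forces the real locus $C(\RR)$ to be topologically maximal in the sense of Harnack: it consists of $\lfloor d/2\rfloor$ nested ovals, together with a pseudo-line when $d$ is odd, arranged so that every real line through $(1,0,0)$ meets each component in real points. This \emph{M-curve property} is the essential geometric input coming from hyperbolicity. I would first reduce to $p$ irreducible (since irreducible factors of a hyperbolic polynomial remain hyperbolic in the same direction) and then to $C$ smooth (nonsingular hyperbolic polynomials are dense in degree $d$, and a determinantal representation of a singular limit can be extracted from a smooth approximating sequence via a compactness argument on symmetric matrices, using the normalization $p(1,0,0)=1$ to rule out blow-up).

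For smooth irreducible $C$ of genus $g=\binom{d-1}{2}$, the next step is a correspondence between symmetric determinantal representations of $p$ (up to orthogonal conjugation) and real theta characteristics $\theta$ on $C$ satisfying a suitable vanishing condition (concretely $h^0(\theta(-1))=0$, which by Riemann-Roch forces the space of global sections of $\theta(1)$ to have dimension exactly $d$). Given such $\theta$, the space $H^0(C,\theta(1))$ carries a natural symmetric bilinear pairing coming from residues, and multiplication by each coordinate linear form $x,y,z$ is self-adjoint with respect to this pairing. Writing these three multiplication operators in an orthonormal basis converts them into symmetric matrices $xI,A,B$ satisfying $cp(x,y,z) = \det(xI + yA + zB)$, which is the identity sought.

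The hard part, and the substance of the Helton-Vinnikov theorem, is establishing the existence of a real theta characteristic whose associated symmetric pairing is \emph{positive definite} rather than merely nondegenerate: only in the positive definite case can the pairing be diagonalized over $\RR$, making $A$ and $B$ genuinely real symmetric and not merely Hermitian. This requires the theory of real Riemann theta functions on the Jacobian of $C$ and a careful analysis of the action of complex conjugation on the Picard group. The M-curve structure of $C(\RR)$ is used precisely to rule out Hermitian-but-not-real-symmetric representations and to pin down the correct sign of the pairing on each real oval. Once positive definiteness is in hand, the scalar $c>0$ is obtained by comparing leading coefficients of the two sides. Alternative modern proofs (Hanselka-Schweighofer and others) reorganize this machinery via real fibered morphisms and polynomial interlacing, but all known proofs funnel through the same Harnack maximality input that hyperbolicity provides.
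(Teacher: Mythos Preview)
The paper does not prove this statement. It is quoted as a known result (the Helton--Vinnikov theorem, cited as \cite{helton2007linear}) and used as a black box in the proof of Corollary~\ref{cor:hyp-mono}. There is therefore nothing in the paper to compare your proposal against.

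As for the proposal itself: what you have written is a high-level roadmap of the actual Helton--Vinnikov argument (reduction to smooth irreducible curves, the dictionary between symmetric determinantal representations and real theta characteristics, and the positive-definiteness issue resolved via the topology of the real locus), not a proof. The broad strokes are accurate, but several claims would need substantial work to make precise. In particular, the compactness argument you sketch for passing from smooth approximations to the singular limit is genuinely delicate: one must control the matrix entries uniformly and ensure the limiting representation is still of the right degree, which is not automatic. Also, your statement that hyperbolicity forces the real locus to be an M-curve is correct for smooth $p$, but the phrasing ``every real line through $(1,0,0)$ meets each component'' is not quite the right way to say it; what one actually uses is that the ovals are nested and separate the hyperbolicity direction from infinity. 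Finally, the attribution ``Hanselka--Schweighofer'' is not standard; the more elementary proof avoiding theta functions is due to Hanselka alone.

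In short: your outline is a fair summary of how the theorem is proved in the literature, but it is an outline, and the paper you are working from simply invokes the result without attempting any such argument.
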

One interpretation of this result is that questions about hyperbolic polynomials and hyperbolicity cones that only depend on the restriction to a three-dimensional subspace can be reduced to questions about real symmetric matrices and the positive semidefinite cone.

\begin{corollary}
	\label{cor:hyp-mono}
	Let $p$ be a polynomial that is homogeneous of degree $d$ and hyperbolic with respect to $e$. 
	Then $-\log p$ is a pairwise-self-concordant barrier for $\Lambda_+(p,e)$.
\end{corollary}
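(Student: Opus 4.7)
The plan is to verify the sufficient condition in Lemma~\ref{lem:mono-cond} for $\phi = -\log p$, reducing the problem to the positive semidefinite case already handled in Lemma~\ref{lem:psd-mono} via the Helton-Vinnikov theorem. Fix $x \in \interior{\Lambda_+(p,e)}$ together with $u, v \in \Lambda_+(p,e)$ satisfying $|u|_x \leq 1$. It suffices to show that $h(t) = D\phi(x+tu)[v]$ is operator monotone on $(-1, \infty)$.

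First I would restrict $p$ to the three-dimensional parametrization $(r, s, t) \mapsto rx + sv + tu$. Set $q(r, s, t) = p(rx + sv + tu)$; this is a three-variable homogeneous polynomial of degree $d$, with $q(1, 0, 0) = p(x) > 0$. Since $x \in \interior{\Lambda_+(p,e)}$, $p$ is also hyperbolic with respect to $x$, and so for every fixed $(s, t)$ the roots of $r \mapsto q(r, s, t) = p(rx + sv + tu)$ are all real. Hence $q$ is hyperbolic with respect to $(1, 0, 0)$, and the Helton-Vinnikov theorem supplies a constant $c > 0$ and real symmetric matrices $A, B$ with $q(r, s, t) = c \det(rI + sA + tB)$.

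Next I would convert the membership and size conditions on $u$ and $v$ into spectral conditions on $A$ and $B$. Setting $(s, t) = (-1, 0)$ gives $p(rx - v) = c\det(rI - A)$, so the hyperbolic eigenvalues of $v$ with respect to $(p, x)$ are exactly the eigenvalues of $A$, and thus $v \in \Lambda_+(p,x) = \Lambda_+(p,e)$ forces $A \succeq 0$. The parallel substitution gives $B \succeq 0$. The bound $|u|_x \leq 1$ is equivalent to $x - u \in \Lambda_+(p, x)$, and $p(rx - (x-u)) = q(r-1, 0, 1) = c\det((r-1)I + B)$ has roots $1 - \lambda_i(B)$, which are nonnegative precisely when $B \preceq I$. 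Substituting $r = 1$ in the Helton-Vinnikov representation yields $p(x + tu + sv) = c\det(I + sA + tB)$, and differentiating in $s$ at $s = 0$ gives
\[ h(t) = -\left.\frac{\partial}{\partial s}\right|_{s=0}\log\det(I + sA + tB) = -\textup{tr}\!\left[(I+tB)^{-1}A\right]. \]
Lemma~\ref{lem:psd-mono} applied with $X = I$, $U = B$, $V = A$ then yields operator monotonicity of $h$ on a domain that includes $(-1, \infty)$, since $\lambda_{\max}(B) \leq 1$ ensures $I + tB \succ 0$ for $t > -1$; invoking Lemma~\ref{lem:mono-cond} completes the argument.

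The substantive content of the argument is the Helton-Vinnikov reduction itself, an externally cited input that does exactly the work of building a determinantal representation out of hyperbolicity; beyond that, the only real care required is in tracking signs when translating between roots of $p(rx - w) = 0$ and eigenvalues of the matrices $A$ and $B$. A minor subtlety worth noting but not belaboring is the case where $\{x, u, v\}$ is linearly dependent, but since $q$ is a genuine three-variable polynomial and remains hyperbolic with respect to $(1, 0, 0)$ regardless of the rank of this set, the Helton-Vinnikov step applies without modification.
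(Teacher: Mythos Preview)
Your proof is correct and follows essentially the same approach as the paper: restrict $p$ to the three-dimensional span of $x,u,v$, apply the Helton--Vinnikov theorem to get a determinantal representation, translate the cone and norm conditions into $A\succeq 0$, $0\preceq B\preceq I$, and then invoke Lemma~\ref{lem:psd-mono} and Lemma~\ref{lem:mono-cond}. Your write-up is in fact more careful than the paper's in verifying the spectral conditions on $A$ and $B$ and in noting the linear-dependence and domain subtleties.
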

\begin{proof}
	Let $x\in \interior{\Lambda_+(p,e)}$ and $u,v\in \Lambda_+(p,e)$ be such that $x-u \in \Lambda_+(p,e)$. Let $q(w,y,z) = p(wx + yu + zv)$ and note that $q$ is hyperbolic with respect to $(1,0,0)$. By the Helton-Vinnikov theorem there is a positive constant $c$ and 
	real symmetric matrices $U,V\in \cS^d$ such that $q(w,y,z) = c\det(wI + yU+zV)$. 
	Moreover the assumptions on $U$ and $V$ imply that $V \psd 0$ and $0\nsd U \nsd I$. 
	Then $h(t) = D\phi(x+tu)[v] = -c\textup{tr}[(I+tU)^{-1}V]$. It follows from Lemma~\ref{lem:psd-mono} that $h$ is operator monotone, and hence from Lemma~\ref{lem:mono-cond} that $-\log p$ is a pairwise-self-concordant barrier.
\end{proof}

\section{Further results on sparsification functions}
\label{sec:further}

In this section, we discuss further bounds on sparsification functions of cones that can 
be obtained by combining results from Sections~\ref{sec:basics},~\ref{sec:sc}, and~\ref{sec:hyp-sparse}. 

\subsection{Cones having lifted representations with respect to hyperbolicity cones}
\label{sec:hyp-lift-bounds}

We begin with the proof of Corollary~\ref{cor:hyp-lift}, which says that if $p$ is hyperbolic with respect to $e$ and $C$ is a closed convex cone with a $\Lambda_+(p,e)$-lift, then $\sparse_C(\epsilon) \leq \lceil 4\textup{deg}(p)/\epsilon^2\rceil $ for all $\epsilon\in (0,1)$. 
\begin{proof}[{Proof of Corollary~\ref{cor:hyp-lift}}]
	If $C$ has a proper $\Lambda_+(p,e)$-lift, then $\sparse_C(\epsilon) \leq \sparse_{\Lambda_+(p,e)}(\epsilon)$ for all $\epsilon$ (by Proposition~\ref{prop:lifts}), and so the bound follows from the bound on the sparsification function for a hyperbolicity cone (Corollary~\ref{thm:hyp-sparse}).

	As such, assume that that the lift is not proper. In other words, suppose that $C = \pi(\Lambda_+(p,e)\cap L)$ where $\pi$ is a linear map and $L$ a linear space that does not meet the relative interior of $\Lambda_+(p,e)$. Let $F$ be the smallest face of $\Lambda_+(p,e)$ that contains $\Lambda_+(p,e)\cap L$. Then it follows, for instance, from~\cite[Proposition 2.2]{lourenco2022amenable} that 
	$\Lambda_+(p,e)\cap L = F \cap L$ and that $\relint(F) \cap L$ is non-empty. 
	Furthermore, it follows from~\cite[Corollary 3.4]{lourencco2024hyperbolicity} that $F$ is the hyperbolicity cone corresponding to some hyperbolic polynomial $q$ with $\textup{deg}(q) \leq \textup{deg}(p)$. Therefore, $C = \pi(F \cap L)$ has a proper $F$-lift, and so 
	\[ \sparse_C(\epsilon) \leq \sparse_F(\epsilon) \leq \lceil 4\,\textup{deg}(q)/\epsilon^2\rceil \leq \lceil 4\,\textup{deg}(p)/\epsilon^2\rceil.\]
\end{proof}

\subsection{Sparsification functions and dual cones}
\label{sec:sc-egs}

Recall that Theorem~\ref{thm:sc-sparse} gives a bound of $\lceil(4\nu/\epsilon)^2\rceil $ on the sparsification function of any convex cone that admits a $\nu$-logarithmically homogeneous self-concordant barrier.
We have seen that this result can be improved to $\lceil4\nu/\epsilon^2\rceil $ when $K$ admits a $\nu$-logarithmically homogeneous pairwise-self-concordant barrier. However, there are situations where
Theorem~\ref{thm:sc-sparse} gives interesting sparsification results.

For instance, suppose that $K = \Lambda_+(p,e)^*$ is the dual cone of a hyperbolicity cone associated with a low-degree hyperbolic polynomial $p$. 
In general, the dual cone of a hyperbolicity cone is not a hyperbolicity cone~\cite{ramana1995some}, and little is known about lifted representations of these cones.
Despite this, we can get a logarithmically homogeneous self-concordant barrier for $\Lambda_+(p,e)^*$ with parameter $\textup{deg}(p)$ via the following result.
\begin{theorem}[{\cite[Theorem 2.4.4]{nesterov1994interior}}]
	\label{thm:conj-barrier}
	Let $K$ be a closed pointed full-dimensional convex cone and let $\phi$ be a $\nu$-logarithmically homogeneous self-concordant barrier for $K$. 
	Then $\phi^*(y):= \sup_{x} \langle x,y\rangle - \phi(x)$ is a $\nu$-logarithmically homogeneous self-concordant barrier for $-K^*$. 
\end{theorem}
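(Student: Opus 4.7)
The plan is to verify three properties of $\phi^*$ in turn: that $\interior{\textup{dom}(\phi^*)} = -\interior{K^*}$, that $\phi^*$ is $\nu$-logarithmically homogeneous, and that $\phi^*$ is standard self-concordant. Combining these yields that $\phi^*$ is a $\nu$-logarithmically homogeneous self-concordant barrier for $-K^*$. The first two properties are direct consequences of logarithmic homogeneity of $\phi$, while the third is the classical fact that Legendre conjugation preserves standard self-concordance for non-degenerate convex functions.

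To pin down the domain of $\phi^*$, I would fix $x\in \interior{K}$ and examine the behaviour of $t\mapsto \langle tx, y\rangle - \phi(tx) = t\langle x, y\rangle - \phi(x) + \nu\log t$ as $t\to\infty$, using logarithmic homogeneity of $\phi$. The supremum over scalings is finite only if $\langle x, y\rangle < 0$ for every nonzero $x \in K$, i.e., only if $y\in -\interior{K^*}$. Conversely, for any $y\in -\interior{K^*}$ the sup is attained at some unique $x^\star\in\interior{K}$ characterised by $\nabla\phi(x^\star) = y$; existence follows from coercivity of $x\mapsto \phi(x) - \langle x, y\rangle$ on sections of $\interior{K}$, and uniqueness from strict convexity of $\phi$, which holds because $K$ is pointed so the Hessian norm $\|\cdot\|_x$ is non-degenerate (Lemma~\ref{lem:lhsc-prop}).

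For logarithmic homogeneity, I would substitute $x = x'/t$ in $\phi^*(ty) = \sup_x\{\langle x, ty\rangle - \phi(x)\}$. Since $\interior{K}$ is invariant under positive scaling and $\phi(x'/t) = \phi(x') + \nu\log t$, this gives
\[
\phi^*(ty) = \sup_{x'\in \interior{K}} \{\langle x',y\rangle - \phi(x') - \nu\log t\} = \phi^*(y) - \nu\log t,
\]
as required.

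The remaining, and hardest, step is to verify standard self-concordance of $\phi^*$. My plan is to exploit the fact that $\nabla\phi:\interior{K}\to -\interior{K^*}$ is a $C^2$ diffeomorphism with inverse $\nabla\phi^*$, giving the Hessian identity $\nabla^2\phi^*(y) = \nabla^2\phi(x)^{-1}$ at conjugate points $y = \nabla\phi(x)$. Differentiating this identity along a direction $v$, with corresponding primal direction $u := \nabla^2\phi^*(y)\,v$, expresses $D^3\phi^*(y)[v,v,v]$ in terms of $D^3\phi(x)[u,u,u]$, while the Hessian quadratic forms match: $D^2\phi^*(y)[v,v] = D^2\phi(x)[u,u]$. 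The self-concordance inequality for $\phi$ therefore transfers directly to the analogous bound for $\phi^*$. The main obstacle is the careful bookkeeping of these derivative identities, but no new conceptual ingredient beyond the implicit function theorem and the chain rule is required.
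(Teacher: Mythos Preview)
The paper does not prove this theorem; it is quoted with a citation to \cite[Theorem 2.4.4]{nesterov1994interior} and used as a black box. So there is no ``paper's own proof'' to compare against.

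Your outline is the standard textbook argument and is correct in its essentials. A few small points you should tighten if you intend this as a complete proof. First, you identify $\interior{\textup{dom}(\phi^*)}$ but do not explicitly check that $\phi^*$ is a closed convex function with $\closure{\textup{dom}(\phi^*)} = -K^*$; closedness is automatic for a conjugate, and the closure statement follows once you know $\textup{dom}(\phi^*) = -\interior{K^*}$, but you should say so. Second, to invoke the derivative identities $\nabla^2\phi^*(y) = \nabla^2\phi(x)^{-1}$ and the third-derivative transfer, you need $\phi^*$ to be $C^3$ on its domain; this follows from the inverse function theorem applied to $\nabla\phi$, using non-degeneracy of $\nabla^2\phi$ (which you correctly attribute to pointedness of $K$). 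Third, your third-derivative computation omits a sign: differentiating $\nabla^2\phi^*(y) = [\nabla^2\phi(x)]^{-1}$ gives $D^3\phi^*(y)[v,v,v] = -D^3\phi(x)[u,u,u]$, not equality without sign. This is harmless because the self-concordance inequality bounds the absolute value, but the statement ``expresses $D^3\phi^*(y)[v,v,v]$ in terms of $D^3\phi(x)[u,u,u]$'' should be made precise.
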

Applying Theorem~\ref{thm:sc-sparse} immediately yields the following bound on the sparsification functions 
of the dual cones of hyperbolicity cones.
\begin{corollary}
	\label{cor:hyp-dual}
	Suppose that $p$ is a hyperbolic polynomial of degree $d$, hyperbolic with 
	respect to $e$ with associated 
	hyperbolicity cone $\Lambda_+$. Let $\Lambda_+^*$ denote the associated dual cone. 
	Then $\sparse_{\Lambda_+^*}(\epsilon) \leq \lceil (4d/\epsilon)^2\rceil $ for all $\epsilon\in (0,1)$.
\end{corollary}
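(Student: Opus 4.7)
The proof I envisage is essentially a chaining together of three results already in the paper, so I will describe it as a short plan with attention to a couple of technical points.

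The plan is to obtain a logarithmically homogeneous self-concordant barrier for $\Lambda_+^*$ (up to a sign) and apply the general bound (Theorem~\ref{thm:sc-sparse}) on sparsification functions of cones that admit such a barrier. First, by G\"uler's theorem, $\phi = -\log p$ is a $d$-logarithmically homogeneous self-concordant barrier for $\Lambda_+ = \Lambda_+(p,e)$. Next, Theorem~\ref{thm:conj-barrier} tells us that the conjugate $\phi^*(y) = \sup_x \langle x,y\rangle - \phi(x)$ is a $d$-logarithmically homogeneous self-concordant barrier for $-\Lambda_+^*$.

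At this point, Theorem~\ref{thm:sc-sparse} gives $\sparse_{-\Lambda_+^*}(\epsilon) \leq \lceil (4d/\epsilon)^2\rceil$ for all $\epsilon\in(0,1)$, provided $-\Lambda_+^*$ is a proper cone (equivalently, $\Lambda_+^*$ is proper). Properness of $\Lambda_+^*$ reduces to pointedness of $\Lambda_+$ (via duality) together with $\Lambda_+$ being full-dimensional (which is automatic since $e\in \interior{\Lambda_+}$). The pointedness assumption on $\Lambda_+$ amounts to assuming $p$ is complete; in this case, $\Lambda_+^*$ is proper, as required. (If $p$ is not complete, one can first pass to the quotient by the lineality space $\Lambda_+\cap(-\Lambda_+)$, which corresponds to restricting to a full-dimensional pointed subcone without changing the sparsification function, using the general-position lemma in Section~\ref{sec:basics}.)

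Finally, the map $\sigma:x\mapsto -x$ is a linear isomorphism from $\spn(\Lambda_+^*)$ to $\spn(-\Lambda_+^*)$ sending $-\Lambda_+^*$ to $\Lambda_+^*$, so Lemma~\ref{lem:proj} (in its linear-isomorphism form) gives $\sparse_{\Lambda_+^*}(\epsilon) = \sparse_{-\Lambda_+^*}(\epsilon)$. Combining the two bounds yields the desired inequality. The only step that is not wholly mechanical is verifying the properness of $\Lambda_+^*$ under the stated hypotheses, which I expect to be the main (but minor) obstacle; everything else is a direct citation of an earlier result.
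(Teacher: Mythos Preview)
Your proposal is correct and follows essentially the same route as the paper: combine G\"uler's theorem, Theorem~\ref{thm:conj-barrier}, and Theorem~\ref{thm:sc-sparse}. You are more explicit than the paper about two technical points (the sign flip from $-\Lambda_+^*$ to $\Lambda_+^*$ via Lemma~\ref{lem:proj}, and properness of $\Lambda_+^*$ when $p$ is not complete), but the underlying argument is identical.
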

This is remarkable because it gives another source of convex cones, other than hyperbolicity cones, where the sparsification function is bounded above by a quantity that is independent of the dimension of the cone.

We currently do not know, in general, whether the dual cone of a hyperbolicity cone admits a pairwise-self-concordant barrier, let alone one that is $\nu$-logarithmically homogeneous for some $\nu < 4\deg(p)^2$. As such, it is unclear whether  there is any way to obtain an improved bound on $\sparse_{\Lambda_+^*}(\epsilon)$ by directly applying Theorem~\ref{thm:main-sc-gen}.

\subsection{Example: epigraphs of the spectral and nuclear norms}
As an illustration of the results in this section, let $n > k$ be positive integers and consider the cones
\begin{align*}
	K_{\textup{sp}} & = \left\{(X,t)\in \RR^{n\times k}\times \RR\;:\; \sigma_{1}(X) \leq t\right\}\quad\textup{and}\quad\\
	K_{\textup{nuc}} & = \left\{(X,t)\in \RR^{n\times k}\times \RR\;:\; \sum_{i=1}^{k}\sigma_i(X) \leq t\right\},
\end{align*}
where $\sigma_{1}(X)\geq \sigma_2(X) \geq \cdots \geq \sigma_k(X)$ are the singular values of the $n\times k$ matrix $X$ (with $k\leq n$). 
The cone $K_{\textup{sp}}$ is the epigraph of the spectral norm. The cone $K_{\textup{nuc}}$ is the epigraph of the nuclear norm. Consider the setting where $k$ is fixed and $n$ is growing. The special case $k=1$ gives the second order cone (or Lorentz cone). 

It turns out that $K_{\textup{nuc}} = K_{\textup{sp}}^*$, which follows from the duality of the spectral and nuclear norms. Moreover, $K_{\textup{sp}}$ is the hyperbolicity cone associated with the hyperbolic polynomial $-\log\det(t^2I_k - X^\intercal X)$ of degree $2k$~\cite[Section 6]{bauschke2001hyperbolic}. Therefore the sparsification function of $K_{\textup{sp}}$ is bounded above by $\lceil 8k/\epsilon^2\rceil$, despite being a cone of dimension $nk+1$. 

Interestingly, $K_{\textup{sp}}$ also admits the $(k+1)$-logarithmically homogeneous self-concordant barrier, 
\[  -\log\det(tI_k - X^\intercal X/t) - \log(t)\]
(see, e.g.,~\cite[Proposition 5.4.6]{nesterov1994interior}), which is not a hyperbolic barrier. However, this means that $K_{\textup{nuc}}$ also admits a $(k+1)$-logarithmically homogeneous self-concordant barrier (by Theorem~\ref{thm:conj-barrier}). Theorem~\ref{thm:sc-sparse}
then implies that the sparsification function of 
$K_{\textup{nuc}}$ is bounded above by $\lceil (4(k+1)/\epsilon)^2\rceil $, despite having dimension $nk+1$.

\section{Sparsification and packing and covering conic programs}
\label{sec:consequences}

This section considers a special class of conic programs that generalize packing and covering linear programs. These are called covering and packing conic programs, respectively. Sparsification gives structural information about near-optimal solutions of covering conic programs. Sparsification also allows packing conic programs to be approximated by a potentially simpler problem instances.

A \emph{covering linear program} is an optimization problem of the form
\begin{equation}
\label{eq:LP-c} \min_{y\in \RR_+^k}\; b^\intercal y\;\;\textup{subject to}\;\; \sum_{i=1}^{m}y_ia_i\geq c.\end{equation}
where $c\in \RR_+^d$,  $a_i\in \RR_+^d$ for $i=1,2,\ldots,n$, and $b\in \RR_+^k$. 
The corresponding dual problem, known as a \emph{packing linear program}, has the form
\begin{equation}
\label{eq:LP-p} \max_{x \in \RR_+^d}\; c^\intercal x\;\;\textup{subject to}\;\; a_i^\intercal x \leq b_i\;\;\textup{for $i=1,2,\ldots,k$}\end{equation}

This section considers how natural conic generalizations of~\eqref{eq:LP-c} and~\eqref{eq:LP-p} can be sparsified in certain ways. 
Let $\cE$ be a finite dimensional real inner product space.
Let $K\subseteq \cE$ be a closed convex cone and let $K^*\subseteq \cE^*$ be the corresponding dual cone. Let $a_1,\ldots,a_m\in K$ and $c\in K$. 
A natural conic generalization of a covering linear program
is an optimization problem of the form
\begin{equation}\label{eq:conic-c} \textsc{cover}(b,c) := \inf_{y\in \RR_+^k}\;\langle b,y\rangle\;\;\textup{subject to}\;\; \sum_{i=1}^{k}y_ia_i \psd_{K} c,\end{equation}
where $b\in \RR_+^k$.
The corresponding dual problem has the form
\begin{equation}\label{eq:conic-p} \textsc{pack}(b,c) := \sup_{x\in K^*}\;\langle c,x\rangle \;\;\textup{subject to}\;\;
 \langle a_i,x\rangle \leq b_i\;\;\textup{for $i=1,2,\ldots,k$},\end{equation}
and is a natural conic generalization of a packing linear program. If each entry of 
$b$ is strictly positive then it is straightforward to check that 
strong duality holds for this pair of conic programs since Slater's condition holds.

The following result shows that if we replace the cost vector $c$ of~\eqref{eq:conic-p} with a cost 
vector $c'$ that is close to $c$ with respect to the ordering induced by $K$ (e.g., obtained via sparsification if $c$ is expressed as a sum of elements of $K$), 
then the optimal value of the resulting problem is approximately preserved. 
This idea is used in the context of algorithms for the max-cut semidefinite program where the cost is the Laplacian of a graph, and the modified cost is obtained via spectral sparsification.
\begin{proposition}
Suppose that $c\in \relint(K)$ and $c = \sum_{i=1}^{m}c_i$ where $c_1,c_2,\ldots,c_m\in K$. Let $c'$ be such that $(1-\epsilon)c \nsd_K c'\nsd_K (1+\epsilon)c$.  If $b>0$ then  
\[(1-\epsilon)\textsc{pack}(b,c) \leq \textsc{pack}(b,c') \leq (1+\epsilon)\textsc{pack}(b,c).\]
\end{proposition}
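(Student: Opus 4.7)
The plan is to exploit the fact that the feasible set of the packing problem is the same for both $c$ and $c'$ (it depends only on $b$ and the $a_i$), so the inequality comparing $\textsc{pack}(b,c)$ and $\textsc{pack}(b,c')$ reduces to comparing the two objective functions pointwise on this common feasible set. The core observation is that relative inequalities with respect to $K$ translate, via the dual cone $K^*$, into scalar inequalities under $\langle \cdot, x\rangle$ for every $x \in K^*$.

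First, I would unpack the hypothesis $(1-\epsilon)c \nsd_K c' \nsd_K (1+\epsilon)c$ to say that both $c' - (1-\epsilon)c$ and $(1+\epsilon)c - c'$ belong to $K$. Since any feasible $x$ for the packing problem lies in $K^*$, pairing these two differences with $x$ yields the scalar sandwich
\[
(1-\epsilon)\langle c,x\rangle \;\leq\; \langle c',x\rangle \;\leq\; (1+\epsilon)\langle c,x\rangle
\]
for all $x \in K^*$, and in particular for all $x$ feasible for $\textsc{pack}(b,c) = \textsc{pack}(b,c')$ (note that the two problems share the same feasible set because the constraints $\langle a_i,x\rangle \le b_i$ and $x \in K^*$ do not involve $c$).

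Next, I would take the supremum over feasible $x$. For the upper estimate, the pointwise bound $\langle c',x\rangle \le (1+\epsilon)\langle c,x\rangle$ immediately gives $\textsc{pack}(b,c') \le (1+\epsilon)\textsc{pack}(b,c)$. For the lower estimate, the pointwise bound $(1-\epsilon)\langle c,x\rangle \le \langle c',x\rangle$ gives $(1-\epsilon)\textsc{pack}(b,c) \le \textsc{pack}(b,c')$ in the same way. The multiplicative scaling by $(1\pm\epsilon)$ is legitimate because $\langle c,x\rangle \ge 0$ for all $x \in K^*$ (since $c \in K$), so in particular $\textsc{pack}(b,c) \ge 0$ and one may factor the positive constants in and out of the supremum.

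There is essentially no obstacle in this proof: the only small technical point worth flagging is the role of the assumption $b > 0$, which guarantees that $x = 0$ is strictly feasible and hence that both packing problems have a well-defined nonnegative optimal value (ensuring the multiplicative statement is meaningful, and avoiding any degenerate case where $\textsc{pack}(b,c)$ is $-\infty$). The assumption $c = \sum_i c_i$ with $c_i \in K$ is not actually used in the argument itself; it only sets the stage by describing how an $\epsilon$-close $c'$ might arise through sparsification.
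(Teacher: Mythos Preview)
Your proof is correct. The lower bound argument coincides with the paper's. For the upper bound, however, the paper takes a different (and less direct) route: instead of passing the pointwise inequality $\langle c',x\rangle \le (1+\epsilon)\langle c,x\rangle$ through the supremum, it works on the dual covering problem, showing that if $y$ is feasible for $\textsc{cover}(b,c)$ then $(1+\epsilon)y$ is feasible for $\textsc{cover}(b,c')$, and then invokes strong duality (available because $b>0$) to transfer the inequality back to the packing values. Your argument is more elementary, since it avoids strong duality entirely and treats both inequalities symmetrically over the common feasible set; the paper's detour through the covering side buys nothing extra here. Your remark that the decomposition $c=\sum_i c_i$ is unused is also accurate for the paper's proof.
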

\begin{proof}
Let $z$ be any feasible point for~\eqref{eq:conic-p}.
Then, since $z\in K^*$, we have that
\[ (1-\epsilon)\langle c,z\rangle \leq \langle c',z\rangle \leq (1+\epsilon)\langle c,z\rangle.\]
But then 
\[ \textsc{pack}(b,c') = \sup_{x\in K^*}\langle c',x\rangle \geq \langle c',z\rangle \geq \langle c,z\rangle (1-\epsilon).\]
Since $z$ is an arbitrary point that satisfies 
$z\in K^*$ and  $\langle a_i,z\rangle \leq b_i$ for $i=1,2,\ldots,k$, we can take the supremum of the right hand side over $z$ to establish that $\textsc{pack}(b,c') \geq (1-\epsilon)\textsc{pack}(b,c)$. 

For the reverse inequality, let $y\in \RR_+^k$ be such that $\sum_{i=1}^{k}y_ia_i \psd_K c$ be any feasible point for the dual problem~\eqref{eq:conic-c}. 
Then $(1+\epsilon)y\in \RR_+^k$ and  
	satisfies
\[\sum_{i=1}^{k} (1+\epsilon)y_ia_i = (1+\epsilon)\sum_{i=1}^{k}y_ia_i \psd_K (1+\epsilon)c \psd_K c'.\]
Therefore, $(1+\epsilon)\langle b,y\rangle \geq \textsc{cover}(c')$. Taking the infimum over $y\in \RR_+^k$ such that $\sum_{i=1}^{k}y_ia_i \psd_K c$ gives
\[ (1+\epsilon)\textsc{cover}(b,c) \geq \textsc{cover}(b,c').\]
	By strong duality we have that $\textsc{cover}(b,c')= \textsc{pack}(b,c')$ and $\textsc{cover}(b,c) = \textsc{pack}(b,c)$, completing the argument. 
\end{proof}

Sparsification can be used in another way in the context of conic covering problems. Indeed, if 
non-trivial sparsification with respect to $K$ is possible, then it implies the existence of near-optimal sparse solutions to conic covering problems~\eqref{eq:conic-c}.
\begin{proposition}
	Suppose that $b>0$ and $c\in \relint(K)$ and let $y\in \RR_+^k$ be an optimal point for~\eqref{eq:conic-c}. Then, for any $\epsilon \in (0,1)$, there exists $y'\in \RR_+^k$ such that
	\begin{itemize}
		\item $y'$ is feasible for~\eqref{eq:conic-c};
		\item $y'$ has at most $\sparse_K(\epsilon)$ non-zero entries; and
		\item $\frac{1-\epsilon}{1+\epsilon}\langle b,y'\rangle \leq \textsc{cover}(b,c) \leq \langle b,y'\rangle$
	\end{itemize}
\end{proposition}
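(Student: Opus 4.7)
My plan is to apply the definition of $\sparse_K$ directly to the decomposition $\sum_{i=1}^{k}y_i a_i$ coming from the optimal primal point, then rescale the resulting sparse combination by $(1-\epsilon)^{-1}$ to restore feasibility. The nontrivial step is to control the cost of the new solution, and for that I would pair the sparsification inequality with an optimal dual certificate for~\eqref{eq:conic-p}.

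Concretely, set $\tilde c := \sum_{i=1}^{k} y_i a_i$. Feasibility gives $\tilde c - c \in K$, and since $c \in \relint(K)$ and $\relint(K)+K \subseteq \relint(K)$, we have $\tilde c \in \relint(K)$. Applying Definition~\ref{def:sp-constant} to the points $(y_i a_i)_{i=1}^{k} \subseteq K$ summing to $\tilde c$ yields a subset $S \subseteq [k]$ with $|S|\leq \sparse_K(\epsilon)$ and positive scalars $(\lambda_i)_{i\in S}$ satisfying
\begin{equation*}
(1-\epsilon)\tilde c \;\nsd_K\; \sum_{i\in S}\lambda_i y_i a_i \;\nsd_K\; (1+\epsilon)\tilde c.
\end{equation*}
Without loss of generality $S$ contains no index with $y_i = 0$ (such indices contribute $0$ and may be dropped). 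Define $y'_i := \lambda_i y_i/(1-\epsilon)$ for $i \in S$ and $y'_i := 0$ otherwise. Then $\sum_i y'_i a_i = (1-\epsilon)^{-1}\sum_{i\in S}\lambda_i y_i a_i \succeq_K \tilde c \succeq_K c$, so $y'$ is feasible; it has at most $\sparse_K(\epsilon)$ nonzero entries by construction; and optimality of $y$ together with feasibility of $y'$ gives $\textsc{cover}(b,c) \leq \langle b,y'\rangle$.

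For the remaining bound, let $x^* \in K^*$ be an optimal dual solution to~\eqref{eq:conic-p}. Complementary slackness then gives $\langle a_i,x^*\rangle = b_i$ for every $i$ with $y_i>0$, and $\langle \tilde c,x^*\rangle = \langle c,x^*\rangle = \textsc{cover}(b,c)$. Pairing the right half of the sparsification inequality with $x^*\in K^*$ produces
\begin{equation*}
\sum_{i\in S}\lambda_i y_i b_i \;=\; \sum_{i\in S}\lambda_i y_i \langle a_i,x^*\rangle \;\leq\; (1+\epsilon)\langle \tilde c,x^*\rangle \;=\; (1+\epsilon)\textsc{cover}(b,c),
\end{equation*}
so $\langle b,y'\rangle = (1-\epsilon)^{-1}\sum_{i\in S}\lambda_i y_i b_i \leq \tfrac{1+\epsilon}{1-\epsilon}\textsc{cover}(b,c)$, which rearranges to the claimed inequality.

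The one point that needs care is the existence of the dual certificate $x^*$ with the required complementary slackness relations, since the paper only records equality of the primal and dual optimal values. Here I would observe that under $c\in \relint(K)$ the primal satisfies Slater's condition: any feasible $y^{(0)}$ can be scaled and then shifted by a small positive multiple of $\mathbf{1}$ to obtain $\bar y > 0$ with $\sum_i \bar y_i a_i - c \in \interior(K)$ (using that $\sum_i y^{(0)}_i a_i \in \relint(K)+K \subseteq \relint(K)$). This gives dual attainment and KKT, supplying the $x^*$ above. Apart from this standard constraint-qualification bookkeeping, the argument is a direct application of Definition~\ref{def:sp-constant} and linear conic duality.
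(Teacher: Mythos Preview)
Your proof is correct and follows essentially the same route as the paper's: apply the definition of $\sparse_K$ to the sum $\sum_i y_i a_i$, rescale by $(1-\epsilon)^{-1}$ to recover feasibility, and then pair the upper sparsification inequality with an optimal dual point to control the cost via complementary slackness. The only notable difference is that you explicitly justify the existence of the dual optimizer $x^*$ by checking primal Slater's condition (from $c\in\relint(K)$), whereas the paper simply writes ``let $x$ be any optimal point for~\eqref{eq:conic-p}'' without further comment; your added care here is warranted, since the discussion preceding the proposition only verifies Slater's condition on the packing side from $b>0$.
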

\begin{proof}
	Consider the point $e = \sum_{i=1}^{k}y_i a_i \psd_K c$. Since $c\in \relint(K)$ it follows that $e\in \relint(K)$. Fix $\epsilon\in (0,1)$. 
	From the definition of the sparsification function of $K$, there exists $S\subseteq[k]$ 
	with $|S|\leq  \sparse_K(\epsilon)$, and positive 
	scalars $(\lambda_i)_{i\in S}$, such that 
	\begin{equation}
		\label{eq:sp-cover}
		(1-\epsilon)e \nsd_K \sum_{i\in S}\lambda_i y_i a_i \nsd_K (1+\epsilon)e.
	\end{equation}
	Let $y_i' = \lambda_i y_i/(1-\epsilon)$ for $i\in S$ and $y_i' = 0$ for $i\in [k]\setminus S$. 
	Then $y_i'\geq 0$ for all $i$ and $\sum_{i=1}^{k}y_i'a_i = \sum_{i\in S} \lambda_i y_i a_i/(1-\epsilon) \psd_K e \psd_K c$. Therefore $y'$ is feasible for~\eqref{eq:conic-c}. It follows that $\langle b,y'\rangle \geq \textsc{cover}(b,c)$.  

	Let $x$ be any optimal point for~\eqref{eq:conic-p}. Since $b>0$, strong duality holds. Therefore, the primal-dual optimal pair $(y,x)$ satisfies the complementarity conditions 
	\begin{align}
		\langle c,x\rangle &= \left\langle \sum_{i=1}^{k}y_ia_i,x\right\rangle = \langle e,x\rangle\quad\textup{and}\label{eq:cs-cone}\\
	y_ib_i &= y_i\langle a_i,x\rangle\quad\textup{for all $i=1,2,\ldots,k$}.\label{eq:cs-cone2}
	\end{align} 
	We then have that 
	\begin{equation*}  \langle c,x\rangle \stackrel{(a)}{=} \left\langle e,x\right\rangle
	\stackrel{(b)}{\geq} \frac{1}{1+\epsilon}\left\langle \sum_{i\in S}\lambda_i y_i a_i,x\right\rangle\\ = \frac{1}{1+\epsilon}\sum_{i\in S}\lambda_iy_i\langle a_i,x\rangle \stackrel{(c)}{=} \frac{1}{1+\epsilon} \sum_{i\in S}\lambda_i y_i b_i = \frac{1-\epsilon}{1+\epsilon}\langle b,y'\rangle\end{equation*}
	where (a) follows from~\eqref{eq:cs-cone}, (b) follows from~\eqref{eq:sp-cover} and the fact that $x\in K^*$, and (c) follows from~\eqref{eq:cs-cone2}. 
	The final result then follows 
	from the fact that $\textsc{cover}(b,c) = \textsc{pack}(b,c) = \langle c,x\rangle$. 

\end{proof}

\section{Discussion}
\label{sec:discussion}

This paper introduces the sparsification function of a closed convex cone, and develops basic bounds on this quantity for different families of convex cones. Theorem~\ref{thm:bss} directly generalizes the celebrated linear-sized spectral sparsification results of Batson, Spielman, and Srivastava~\cite{batson2012twice} to the setting of convex cones that admit a pairwise-self-concordant barrier, such as the hyperbolic barriers for hyperbolicity cones. We conclude by discussing some natural questions raised by the results in this paper.

\paragraph{Algorithms:} Our proof of Theorem~\ref{thm:main-sc-gen}, which gives a bound on the sparsification function of convex cones that admit a pairwise-self-concordant barrier, is algorithmic. However, the resulting algorithm, while polynomial time in the case of the positive semidefinite cone, 
is not particularly efficient. There has been a line of work~\cite{allen2015spectral,lee2017sdp,jambulapati2024linear,lau2025spectral} that gives faster algorithms for linear-sized spectral sparsification, i.e., for the case of sparsifying sums of elements of the positive semidefinite cone. It would be interesting to understand the extent to which these algorithms can be generalized to the case of a convex cone that admits a 
$\nu$-logarithmically homogeneous pairwise-self-concordant barrier. 

Similarly, another approach to sparsification (for cuts~\cite{benczur1996approximating}, hypergraphs~\cite{soma2019spectral}, sums of norms~\cite{jambulapati2023sparsifying}, linear codes~\cite{khanna2024code}, and in the spectral sparsification model~\cite{spielman2008graph}) is based on importance sampling. The rough idea is that sampling the terms in the sum based on suitably defined importance weights gives an appropriately sparsified sum, with high probability. In the case of spectral sparsification, such randomized constructions come at the expense of a logarithmic factor in the number of terms required. It would be interesting to see whether a natural analogue of such randomized constructions can achieve, up to logarithmic factors, the same sparsification results as we achieve here. The main challenge appears to be controlling the resulting empirical process over the dual cone $K^*$.

\paragraph{Sparsification with constrained weights:} In the present paper, the 
only requirement on the weights $\lambda_i$ (for $i\in [m]$) is that they are non-negative. There
are interesting graph sparsification problems (for instance degree-preserving sparsification)
where it is natural also to ask that the weights also satisfy certain constraints, such as
 that they lie in a subspace. The discrepancy-based approach to spectral
sparsification is flexible enough to sparsify sums of positive semidefinite matrices while 
ensuring that the weights lie in a given (high-dimensional) affine subspace~\cite{lau2025spectral}.
It would be interesting to extend our abstract sparsification model to also capture this situation.

\paragraph{Pairwise-self-concordant barriers:} We have seen strong bounds on the sparsification
function of convex cones that admit pairwise-self-concordant barriers. However, the most general family 
of convex cones we are aware of that admit pairwise-self-concordant barriers are hyperbolicity cones. 
It would be very interesting to construct other convex cones that admit such barriers. It
seems likely that this is possible, given that hyperbolicity cones enjoy the (apparently)
stronger property that $t\mapsto D\phi(x+tv)[u]$ is operator monotone whenever $x\in \interior{K}$ and $u,v\in K$. It would be also interesting to understand how this property of self-concordant barriers relates to other properties, such as the negative curvature property~\cite{nesterov2016local}.

\paragraph{Faces:} We have seen, in Section~\ref{sec:faces}, that if $F$ is a face of a convex cone $K$ then, under certain additional technical assumptions on the face, $\sparse_{F}(\epsilon)\leq \sparse_K(\epsilon)$ for all $\epsilon\in (0,1)$. It would be interesting to determine whether these additional technical assumptions are necessary or whether sparsification functions are monotone (or even strictly monotone) along faces, in general. The main challenge here appears to be that of relating errors with respect to the ordering induced by $K$ to errors 
with respect to the ordering induced by $F$. 

\paragraph{Duality:} Our most general bound on the sparsification function of a cone (Theorem~\ref{thm:sc-sparse}) is invariant under duality, in the sense that it gives the same bound on the sparsification function of $K$ and $K^*$ (by using the fact that if $K$ has a $\nu$-logarithmically homogeneous self-concordant barrier, then so does $K^*$). However, we do not know, in general how the sparsification functions of $K$ and $K^*$ are related.

\section*{Acknowledgments}
This research was supported in part by an Australian Research Council Discovery Early Career Researcher Award (project number DE210101056) funded by the Australian Government. This work also benefited from the support and stimulating environment provided by the MATRIX institute during the workshop \emph{Theory and applications of stable polynomials} held in 2022. 

\bibliographystyle{alpha}
\bibliography{refs-hypsp}

\end{document}